\newtheorem{thm}{Theorem}[section]
\newtheorem{cor}[thm]{Corollary}
\newtheorem{lem}[thm]{Lemma}
\newtheorem{prop}[thm]{Proposition}
\newtheorem{fact}[thm]{Fact}
\theoremstyle{definition}
\newtheorem{defn}[thm]{Definition}
\newtheorem{que}[thm]{Question}
\newtheorem{conj}[thm]{Conjecture}
\newtheorem{exe}[thm]{Example}
\newtheorem{rem}[thm]{Remark}
\numberwithin{equation}{section}
\newcommand{\SL}{\textnormal{SL}}
\newcommand{\eps}{\varepsilon}
\newcommand{\tu}{\bigtriangleup}
\newcommand{\Wob}{\textnormal{Wob}}
\newcommand{\elk}{\mathfrak{L}}
\newcommand{\odFH}{$\overline{\text{dFH}}$}
\begin{document}
\title[Lattices, means, and commensurating actions]{Irreducible lattices, invariant means, and commensurating actions}
\author{Yves Cornulier}%
\address{Laboratoire de Math\'ematiques\\
B\^atiment 425, Universit\'e Paris-Sud 11\\
91405 Orsay\\FRANCE}
\email{yves.cornulier@math.u-psud.fr}
\thanks{Supported by ANR grant GSG 12-BS01-0003-01}
\subjclass[2000]{43A07 (Primary), 22E40 (Secondary)}

\date{June 24, 2014 (April 2020: added correction in Example \ref{exint}(\ref{exint2}), updated references)}

\begin{abstract}
We study rigidity properties of lattices in terms of invariant means and commensurating actions (or actions on CAT(0) cube complexes). We notably study Property FM for groups, namely that any action on a discrete set with an invariant mean has a finite orbit. 
\end{abstract}

\maketitle

\section{Introduction}

\subsection{Basic definitions}

Let $G$ be a topological group. We call {\em continuous discrete $G$-set} a discrete set $X$ endowed with a continuous action of $G$. Note that continuity of the action means that the point stabilizers are open subgroups of $G$. Recall that a subset $M\subset X$ is {\em commensurated} by the $G$-action if $$\elk_M(g)=\#(M\tu gM)<\infty,\qquad \forall g\in G,$$ where $\tu$ denotes the symmetric difference. This holds in particular when $M$ is {\em transfixed}, in the sense that there there exists a $G$-invariant subset $N$ {\em commensurate} to $M$ in the sense that $\#(M\tu N)<\infty$. Brailovsky, Pasechnik and Praeger \cite{BPP} have shown that $M$ is transfixed if and only if the function $\elk_M$ is bounded on $G$ (see also Corollary \ref{cabou}).

\begin{defn}[\cite{CorFW}]
A locally compact group $G$ has {\em Property FW} if for every continuous discrete $G$-set $X$, every commensurated subset $M\subset X$ is transfixed.
\end{defn}

We also use the following terminology:

\begin{defn}
A locally compact group $G$ has {\em Property FM} if for every continuous discrete $G$-set $X$ with a $G$-invariant mean on all subsets of $X$, there exists a finite $G$-orbit.
\end{defn}

(Property FM for countable discrete groups appears as the negation of ``being in the class ($\mathcal{B}$)" in \cite{GM}.)

Property FW has various characterizations, including:
\begin{itemize}
\item every continuous cellular action on any CAT(0) cube complex has bounded orbits for the $\ell^1$-metric;
\item every continuous cellular action on any CAT(0) cube complex has a fixed point;

\item (if $G$ is compactly generated) for every open subgroup of infinite index of $G$, the Schreier graph of $G/H$ is 1-ended.
\end{itemize}

It is important to allow infinite-dimensional cube complexes in the above equivalences.

Property FW can be viewed as a strengthening of Serre's Property FA (every isometric action on a tree has a fixed point on the 1-skeleton) and a weakening of Property FH (every isometric action on a Hilbert space has a fixed point), which is equivalent to Kazhdan's Property~T for $\sigma$-compact locally compact groups. 

Let us consider one more related property:

\begin{defn}\label{ddfh}
A locally compact group $G$ has {\em Property dFH} if for every continuous discrete $G$-set $X$, we have $H^1(G,\ell^2(X))=0$.
\end{defn}

Here dFH stands for ``discrete Property FH". Obviously Property FH implies Property dFH, which, by standard arguments, implies both Properties FW and FM (the latter for $\sigma$-compact groups); see~\S\ref{red1c}. It also implies Property $\tau$, and, for discrete groups, implies the vanishing of the first $\ell^2$-Betti number.

\subsection{Generalities}

An extensive discussion on Property FW can be found in \cite{CorFW}. Let us provide some basic facts about Property FM. For convenience, we state them for locally compact groups, although most of them are stated for arbitrary topological groups in \S\ref{s_ame}. Let us first mention that Property FM can be viewed, at least for infinite discrete groups, as a strong form of non-amenability. However, many infinite discrete non-amenable groups fail to have Property FM, sometimes for trivial reasons (such as the existence of an infinite amenable quotient), or for more subtle reasons; see for instance the examples and the discussion in \cite{GN}.

\begin{prop}The class of locally compact groups with Property FM 
\begin{enumerate}
\item (Proposition \ref{fmfi}) is invariant by passing to and from open finite index subgroups, and is more generally inherited from closed cocompact subgroups;
\item (Fact \ref{squot}) is stable by taking dense images and in particular by taking quotients; 
\item (Proposition \ref{fmext}) is stable by taking extensions;
\item (Proposition \ref{fmcg}). If $G$ is a locally compact group with Property FM, then it is compactly generated. In particular, if $G$ is discrete then it is finitely generated.
\end{enumerate}
\end{prop}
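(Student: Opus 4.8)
The four assertions are all proved by transporting an invariant mean between the relevant continuous discrete $G$-sets, and I would treat them in roughly increasing order of difficulty. For the cocompact inheritance in (1), I would use that a closed cocompact subgroup $H\le G$ satisfies $G=KH$ for some compact $K\subseteq G$ (cover the compact quotient $G/H$ by finitely many images of relatively compact open sets). Given a continuous discrete $G$-set $X$ with a $G$-invariant mean $m$, the mean $m$ is in particular $H$-invariant, so Property FM for $H$ yields a finite $H$-orbit $Hx$. Then $Gx=K(Hx)=\bigcup_{y\in Hx}Ky$, and each $Ky$ is finite since $K$ is compact and the stabiliser $K\cap G_y$ is open in $K$; thus $Gx$ is a finite union of finite sets, hence a finite $G$-orbit. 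Passing from an open finite-index subgroup $H$ to $G$ is the special case where $K$ is a finite set of coset representatives. For the converse direction in (1), from $G$ to an open subgroup $H$ of finite index $n$, I would instead induce: from a continuous discrete $H$-set $Y$ with an $H$-invariant mean $\mu$, form the induced $G$-set $X=G\times_H Y$, which is again continuous and discrete and splits into $n$ copies of $Y$ once coset representatives are fixed; averaging $\mu$ over these copies gives a $G$-invariant mean on $X$. Property FM for $G$ then produces a finite $G$-orbit, and intersecting it with the $H$-equivariant copy $\{[1,y]:y\in Y\}$ of $Y$, which it must meet (apply a suitable $g^{-1}$), yields a nonempty finite $H$-invariant set, hence a finite $H$-orbit.

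For (2), let $q\colon G\to Q$ have dense image with $G$ having Property FM, and let $X$ be a continuous discrete $Q$-set with a $Q$-invariant mean. Precomposing with $q$ turns $X$ into a continuous discrete $G$-set with the same, now $G$-invariant, mean, so Property FM for $G$ provides a finite $G$-orbit $O$. The point is that $O$ is already a $Q$-orbit: for $x\in O$ the stabiliser $Q_x$ is open, so $q(G)Q_x$ is an open-and-closed subset of $Q$ containing the dense set $q(G)$, whence $q(G)Q_x=Q$ and the $q(G)$-orbit of $x$ coincides with its $Q$-orbit; quotients are the case of a surjection. For the extension in (3), with $1\to N\to G\to Q\to 1$ and $X$ a continuous discrete $G$-set carrying a $G$-invariant mean $m$, I would first restrict to $N$. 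Since $N$ is normal, the union $X_f$ of the finite $N$-orbits is $G$-invariant, and its complement is a $G$-invariant (in particular $N$-invariant) set on which $N$ has no finite orbit; if $m(X\setminus X_f)>0$, renormalising $m$ there would give an $N$-invariant mean with no finite $N$-orbit, contradicting Property FM for $N$, so $m(X_f)=1$. Now $Q$ acts on $Y=X_f/N$, which is continuous and discrete (stabilisers of finite $N$-orbits are open, being finite-index over the open pointwise stabilisers, and $G\to Q$ is open), and pushing $m$ forward along $X_f\to Y$ yields a $Q$-invariant mean. Property FM for $Q$ then gives a finite $Q$-orbit in $Y$, whose preimage is a finite $G$-invariant subset of $X$ and therefore contains a finite $G$-orbit.

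For the compact generation statement (4), I would argue by contraposition. If $G$ is not compactly generated it is the union of a strictly increasing sequence $H_0\subsetneq H_1\subsetneq\cdots$ of open subgroups, and the indices $[G:H_n]$ form a non-increasing sequence in $\{1,2,\dots,\infty\}$ which cannot stabilise at a finite value, since nested subgroups of equal finite index must coincide; hence every $H_n$ has infinite index. On the continuous discrete $G$-set $X=\bigsqcup_n G/H_n$ there is then no finite orbit, yet fixing a non-principal ultrafilter $\mathcal U$ on $\mathbb N$ and setting $m(A)=\lim_{\mathcal U}\mathbf 1[eH_n\in A]$ defines a mean which is $G$-invariant, because for each $g\in G$ one has $g^{-1}\in H_n$, and hence $g^{-1}eH_n=eH_n$, for all large $n$. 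This contradicts Property FM, so $G$ is compactly generated; for discrete $G$ this is finite generation.

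The routine part throughout is checking that the induced, restricted, and quotient constructions remain continuous discrete $G$-sets and that finite orbits correspond as claimed. The one place I expect to need genuine care, and hence the main obstacle, is the handling of the mean in (3): a mean is only finitely additive, so it need not be concentrated on any subset in a countably additive sense, and the reduction to $m(X_f)=1$ must be carried out through the $G$-invariant partition $X=X_f\sqcup(X\setminus X_f)$ together with the renormalisation argument, rather than by any limiting procedure over orbit sizes.
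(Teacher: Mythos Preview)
Your arguments for (1)--(3) are correct and follow the same route as the paper, often more explicitly: your treatment of the cocompact case in (1), the density argument in (2), and the renormalisation step in (3) spell out details that the paper merely asserts. For the passage from $G$ to a finite-index open subgroup in (1), your induced $G$-set $G\times_H Y$ is a repackaging of the paper's argument via coset spaces $\bigsqcup G/L_i$; the two are equivalent.

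There is, however, a genuine gap in (4). You assert that a non-compactly-generated locally compact group $G$ is the \emph{union} of a strictly increasing countable sequence $H_0\subsetneq H_1\subsetneq\cdots$ of open subgroups, and this is exactly what you need when you claim ``for each $g\in G$ one has $g^{-1}\in H_n$ \dots\ for all large $n$''. But if $G$ is not $\sigma$-compact this fails: the natural construction (start with an open compactly generated $H_0$ and adjoin one element at a time) produces a strictly increasing sequence whose union is only a $\sigma$-compact open subgroup $H_\infty$, and your ultrafilter mean is then merely $H_\infty$-invariant, not $G$-invariant. The paper sidesteps this by replacing the sequence with the \emph{net} of all open subgroups finitely generated over a fixed open subgroup; this net is directed by inclusion and genuinely exhausts $G$, so any weak-$*$ limit point of the Dirac measures at the base points is $G$-invariant. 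With that change of indexing set, your argument goes through for arbitrary locally compact $G$.
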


Recall that a locally compact group $G$ has {\em Kazhdan's Property T} if every continuous unitary representation with almost invariant vectors has nonzero invariant vectors. Here, a unitary representation of $G$ {\em has almost invariant vectors} if for every compact subset $K\subset G$ and every $\eps>0$ there exists a vector $\xi$ with $\|\xi\|=1$ and $\sup_{s\in K}\|g\xi-\xi\|\le\eps$. A standard observation (see however the comments following Lemma \ref{eyml2}) is:

\begin{prop}[Proposition \ref{tfw}]
Let $G$ be a locally compact group with Property T. Then it has Property FM.
\end{prop}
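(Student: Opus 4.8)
The plan is to show that Property T implies Property FM by producing, from the hypothesis of a $G$-invariant mean, a sequence of almost invariant vectors in an associated unitary representation, so that Property T forces genuine invariant vectors, which in turn yield a finite orbit. The natural representation to consider is the permutation (quasi-regular–type) representation $\ell^2(X)$ associated to a continuous discrete $G$-set $X$ carrying a $G$-invariant mean $m$ on all subsets of $X$; here continuity ensures that point stabilizers are open, so this is a continuous unitary representation of the locally compact group $G$.

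\smallskip

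First I would recall the classical correspondence between invariant means and almost invariant vectors, which is the heart of the argument. A $G$-invariant mean $m$ on $\ell^\infty(X)$ (equivalently, a finitely additive $G$-invariant probability measure on all subsets of $X$) can be approximated in the weak-$*$ topology by finitely supported probability measures, i.e. by elements of the positive part of the unit sphere of $\ell^1(X)$. The standard Reiter/Namioka trick then converts weak almost invariance of these $\ell^1$-functions into norm almost invariance, and passing to square roots $\xi = \sqrt{f}$ turns an almost invariant sequence in $\ell^1(X)_+$ into an almost invariant sequence of unit vectors in $\ell^2(X)$, using the elementary inequality $\|\sqrt{f}-\sqrt{h}\|_2^2 \le \|f-h\|_1$. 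The continuity of the action guarantees that for each fixed $g$ the map $g \mapsto \|g\xi - \xi\|$ is controlled on compact sets, so one genuinely obtains almost invariant vectors in the sense of the definition of Property T stated in the excerpt.

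\smallskip

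Next, Property T applies: since $\ell^2(X)$ has almost invariant vectors, it has a nonzero $G$-invariant vector $\eta \in \ell^2(X)$. I would then extract the finite orbit from $\eta$. A nonzero $G$-invariant vector in $\ell^2(X)$ is a nonzero square-summable function on $X$ that is constant on each $G$-orbit. Because $\eta \in \ell^2(X)$ and $\eta \neq 0$, some level set $\{x : |\eta(x)| \ge c\}$ is a nonempty \emph{finite} set (square-summability forces only finitely many coordinates to exceed any positive threshold), and invariance of $\eta$ means this level set is a union of $G$-orbits; hence at least one $G$-orbit is finite. This produces the required finite orbit and completes the proof.

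\smallskip

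The main obstacle I expect is purely technical rather than conceptual: carefully verifying the measurability/continuity hypotheses so that the almost invariant vectors qualify under the \emph{continuous} unitary representation framework for a general locally compact (not necessarily discrete) group, rather than just for discrete groups where the Reiter argument is textbook. In particular one must check that $\ell^2(X)$, with $X$ a continuous discrete $G$-set, really is a continuous representation, and that the weak-$*$ approximation of the invariant mean can be carried out compatibly with the uniformity coming from compact subsets of $G$. The parenthetical remark in the excerpt about ``the comments following Lemma \ref{eyml2}'' suggests the author is aware that this standard observation has a subtlety precisely at this point, so I would flag the continuity verification as the step warranting the most care.
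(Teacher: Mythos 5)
Your proposal is correct and follows essentially the same route as the paper: the paper's proof is exactly ``invariant mean $\Rightarrow$ $\ell^2(X)$ almost has invariant vectors (Lemma \ref{eyml2}, due to Eymard) $\Rightarrow$ nonzero invariant vector by Property T $\Rightarrow$ finite orbit from a finite level set.'' You correctly identified the one genuinely delicate point --- upgrading almost invariance from the underlying discrete group to uniformity over compact subsets of $G$ via a Reiter-type argument --- which is precisely the step the paper does not reprove but delegates to Eymard.
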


Let us also mention a variant of \cite[Lemma 4.5]{GM}.

\begin{prop}[Proposition \ref{tars}]\label{itars}
Let $G$ be a non-amenable locally compact group in which every open subgroup of infinite index is amenable. Then $G$ has Property FM.
\end{prop}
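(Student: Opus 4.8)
The plan is to argue by contraposition on amenability: assuming a continuous discrete $G$-set $X$ carries a $G$-invariant mean but has no finite orbit, I would deduce that $G$ is amenable, contradicting the hypothesis. The first step is purely structural. If $X$ has no finite $G$-orbit, then for every $x\in X$ the orbit $Gx\cong G/G_x$ is infinite, so the stabilizer $G_x$ has infinite index; since the action is continuous, $G_x$ is open, and the standing hypothesis forces $G_x$ to be amenable. Writing $X=\bigsqcup_{i} G/H_i$ as a disjoint union of orbits, I thus have $X$ expressed as a union of transitive $G$-sets with amenable open stabilizers $H_i$.

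The second step translates the existence of the invariant mean into representation-theoretic language. I would use the standard equivalence that a continuous discrete $G$-set $Y$ admits a $G$-invariant mean on $\ell^\infty(Y)$ if and only if the permutation representation on $\ell^2(Y)$ weakly contains the trivial representation $1_G$: one direction normalizes almost invariant unit vectors to approximately invariant probability measures and passes to a weak-$*$ limit, while the converse is a Namioka/Reiter argument producing $\ell^2$ almost invariant vectors from the invariant mean. Applied to $X$, this gives $1_G\prec \ell^2(X)=\bigoplus_i \ell^2(G/H_i)$.

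The third step is the crux. For each $i$, amenability of $H_i$ means $1_{H_i}\prec\lambda_{H_i}$, and Fell's continuity of induction with respect to weak containment yields $\mathrm{Ind}_{H_i}^G 1_{H_i}\prec \mathrm{Ind}_{H_i}^G\lambda_{H_i}$. Since $H_i$ is open, the left-hand side is exactly $\ell^2(G/H_i)$, while induction in stages identifies the right-hand side with $\lambda_G$; hence $\ell^2(G/H_i)\prec\lambda_G$ for every $i$. As $\bigoplus_i\lambda_G=\lambda_G^{\oplus\infty}$ is weakly equivalent to $\lambda_G$, weak containment is stable under this direct sum, so $\bigoplus_i\ell^2(G/H_i)\prec\lambda_G$. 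Combining with the previous step and transitivity of weak containment gives $1_G\prec\lambda_G$, i.e.\ $G$ is amenable --- the desired contradiction.

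I expect the main obstacle to be the careful handling of the weak-containment manipulations in the locally compact setting, in particular verifying $\mathrm{Ind}_{H}^G 1_H\cong\ell^2(G/H)$ and $\mathrm{Ind}_H^G\lambda_H\cong\lambda_G$ for the open subgroup $H$, and that Fell's continuity of induction and direct-sum stability apply as stated; the translation of the invariant mean into almost invariant $\ell^2$-vectors is routine but must be carried out with the mean taken on all of $\ell^\infty(X)$. An alternative, more elementary route in the transitive case is to observe that an invariant mean on $G/H$ says exactly that $H$ is co-amenable in $G$, whence co-amenable plus amenable forces $G$ amenable; the weak-containment argument above is preferable precisely because it handles uniformly the case where the invariant mean spreads across infinitely many orbits, giving no single orbit positive mass.
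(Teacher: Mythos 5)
Your proposal is correct, but it takes a genuinely different route from the paper. You share the first (structural) step: no finite orbit forces every stabilizer to be an open subgroup of infinite index, hence amenable, so $X=\bigsqcup_i G/H_i$ with each $H_i$ amenable and open. From there the paper invokes its Lemma \ref{multia}, which is a direct fixed-point argument: amenability of each $H_i$ produces an $H_i$-fixed point in any compact convex $G$-space $K$, hence a $G$-equivariant map $X\to K$; pushing the invariant mean forward and taking the barycenter of the resulting probability measure yields a $G$-fixed point, so $G$ is amenable. Your argument instead goes through weak containment: Eymard's theorem to get $1_G\prec\ell^2(X)$, Hulanicki--Reiter for $1_{H_i}\prec\lambda_{H_i}$, Fell continuity of induction plus induction in stages to get $\ell^2(G/H_i)=\mathrm{Ind}_{H_i}^G 1_{H_i}\prec\lambda_G$, and stability of weak containment under direct sums to conclude $1_G\prec\lambda_G$. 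All of these steps are valid for locally compact groups, so your proof works. The main trade-off is that your route leans on the hard direction of Eymard's lemma (Lemma \ref{eyml2} in the paper): passing from an invariant mean on a discrete $G$-set to almost invariant vectors in $\ell^2(X)$ \emph{as a continuous representation of the locally compact group $G$} is not the routine weak-$*$ argument (which only gives almost invariance for the underlying discrete group) but requires Reiter-type work, as the paper itself warns; you do cite this correctly but slightly understate its cost. The paper's barycenter argument bypasses Eymard's theorem entirely and is in that sense more elementary and self-contained; your argument, on the other hand, is the more standard representation-theoretic package and generalizes readily to statements phrased in terms of weak containment. Your closing remark about co-amenability is right to be set aside: it only handles a single orbit, and the whole point here is that the mean may give every individual orbit measure zero.
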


\begin{defn}\label{algty}
Consider a locally compact group $G$ with a {\em topological almost direct product decomposition} $G=G_1...G_n$, meaning that the $G_i$ are non-compact closed normal subgroups centralizing each other, and the canonical homomorphism $G_1\times\dots \times G_n$ is proper, surjective with finite kernel.

We say that $G$ is {\em semisimple of algebraic type} (with respect to this decomposition) if each $G_i$ is topologically isomorphic to the group of $\mathbf{K}_i$-points of an almost $\mathbf{K}_i$-simple, semisimple $\mathbf{K}_i$-isotropic linear algebraic group and $\mathbf{K}_i$ is a non-discrete locally compact field. 
\end{defn}

Using the Howe-Moore property for each $G_i$ and Proposition \ref{itars}, we obtain:

\begin{prop}[Proposition \ref{ssfm}]\label{issfm}
If $G$ is semisimple of algebraic type, then it has Property FM.
\end{prop}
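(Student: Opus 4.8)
The plan is to reduce the claim to Proposition~\ref{itars}, which asserts that a non-amenable locally compact group all of whose open subgroups of infinite index are amenable has Property FM. So the strategy is twofold: first verify the hypotheses of Proposition~\ref{itars} for each simple factor $G_i$, and then combine the factors using the stability properties already recorded (closure under extensions, and more basically under direct products up to finite kernel coming from the almost direct product structure).

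\medskip

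First I would treat a single factor $G_i = \mathbf{G}(\mathbf{K}_i)$, the $\mathbf{K}_i$-points of an almost $\mathbf{K}_i$-simple, $\mathbf{K}_i$-isotropic semisimple group over a non-discrete locally compact field. Such a group is non-amenable precisely because it is isotropic (it contains a copy of a rank-one group, hence a non-abelian free subgroup, or more cleanly it is non-compact and semisimple). The key input is the Howe--Moore property, which says that every unitary representation of $G_i$ without nonzero invariant vectors has matrix coefficients vanishing at infinity. The point is that Howe--Moore forces any open subgroup $H \le G_i$ that is \emph{not} cocompact to be amenable: indeed, if $H$ has infinite covolume then the quasi-regular representation $\ell^2(G_i/H)$ has no nonzero invariant vectors, so its coefficients are $C_0$; combined with openness of $H$ this forces the $G_i$-action on $G_i/H$ to have an invariant mean obstruction that can only be resolved when $H$ is amenable. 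More directly, the standard consequence of Howe--Moore is that in a simple group with this property every closed subgroup is either cocompact or amenable (in fact the non-cocompact open subgroups, being of infinite covolume, must be contained in proper algebraic subgroups, which are amenable by the isotropy/parabolic structure). Since an open subgroup of infinite index in a simple $\mathbf{K}_i$-isotropic group is non-cocompact, every such subgroup is amenable. Thus $G_i$ satisfies the hypotheses of Proposition~\ref{itars}, and hence has Property FM.

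\medskip

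Next I would assemble the factors. By Definition~\ref{algty} the map $G_1 \times \dots \times G_n \to G$ is proper and surjective with finite kernel, so $G$ is commensurable (via a closed cocompact, indeed finite-kernel-quotient, relationship) to the direct product $\prod_i G_i$. Property FM passes to quotients by dense (in particular surjective continuous) images by part~(2) of the generalities proposition, and is invariant under passing to and from open finite index subgroups and inherited from closed cocompact subgroups by part~(1); so it suffices to show $\prod_i G_i$ has Property FM. For the product I would induct on $n$ using stability under extensions (part~(3)): writing $\prod_{i=1}^n G_i$ as an extension of $\prod_{i=1}^{n-1} G_i$ by the normal subgroup $G_n$, both the kernel $G_n$ and the quotient $\prod_{i=1}^{n-1}G_i$ have Property FM by the inductive hypothesis and the single-factor case, whence the product does too.

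\medskip

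The main obstacle I expect is the single-factor step, specifically pinning down precisely why Howe--Moore yields amenability of \emph{every} open subgroup of infinite index, rather than merely control of cocompact versus non-cocompact subgroups. One must be careful that ``infinite index'' for an open subgroup of a non-discrete group means infinite covolume / non-cocompactness, and that a non-cocompact open subgroup of a $\mathbf{K}_i$-isotropic almost-simple group is amenable; this uses the structure theory (such a subgroup is contained in a proper parabolic or otherwise of the correct form) together with Howe--Moore to rule out intermediate behavior. Everything after that is bookkeeping with the stated stability properties, so the crux is genuinely the amenability dichotomy for the simple factors supplied by Howe--Moore.
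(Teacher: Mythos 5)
Your proposal follows essentially the same route as the paper: reduce to a single almost simple factor using closure under extensions (Proposition \ref{fmext}, together with Fact \ref{squot} and Proposition \ref{fmfi} to handle the finite kernel of $G_1\times\dots\times G_n\to G$), then settle the simple case by combining the Howe--Moore property with the amenability dichotomy of Proposition \ref{tars} (the paper invokes Lemma \ref{multia} directly, which is the same argument). The one point to correct is your justification of the key step. The consequence of Howe--Moore that the paper uses is that every open subgroup $H$ of an almost simple factor is either \emph{compact} or \emph{cocompact}: if $H$ is open and non-compact, the matrix coefficient $g\mapsto\langle g\delta_{eH},\delta_{eH}\rangle=\mathbf{1}_H(g)$ of $\ell^2(G/H)$ does not vanish at infinity, so $\ell^2(G/H)$ has nonzero invariant vectors and $G/H$ is finite. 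Since an open cocompact subgroup has finite index, every open subgroup of infinite index is compact, hence amenable --- which is exactly the hypothesis of Proposition \ref{tars}. Your alternative justification, that a non-cocompact open subgroup is ``contained in a proper parabolic, which is amenable,'' would fail in higher rank: proper parabolics have non-amenable Levi factors (e.g.\ a maximal parabolic of $\SL_3$ contains $\SL_2$), so amenability must come from compactness, not from the parabolic structure. With that step repaired, your argument is the paper's.
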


\subsection{Irreducible lattices}

The results and the discussion below are motivated by the following two conjectures.

\begin{conj}\label{conjs}
Let $S$ be a semisimple connected Lie group with at least two simple factors and no compact factor. Then every irreducible lattice $\Gamma$ in $S$ has Property FW.
\end{conj}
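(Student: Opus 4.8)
The plan is to exploit the two characterizations of Property FW recorded above: that every continuous cellular action on a CAT(0) cube complex has bounded orbits for the $\ell^1$-metric, and (since $\Gamma$, being a lattice, is finitely generated) that for every infinite-index subgroup $H\le\Gamma$ the Schreier graph of $\Gamma/H$ is $1$-ended. I would first split into two cases according to whether $S$ has Kazhdan's Property T. Write $S=S_1\cdots S_n$ as an almost direct product of its simple factors. If every $S_i$ has Property T---equivalently, none of them is locally isomorphic to $\mathrm{SO}(m,1)$ or $\mathrm{SU}(m,1)$---then $S$ has Property T, hence so does the lattice $\Gamma$; for the $\sigma$-compact group $\Gamma$ this is Property FH, which implies Property dFH (Definition \ref{ddfh}), which in turn implies Property FW. This disposes of the case with no rank-one factor of orthogonal or unitary type.

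The substance of the conjecture therefore lies in the case where at least one factor $S_i$ is locally isomorphic to $\mathrm{SO}(m,1)$ or $\mathrm{SU}(m,1)$. Here $S$ fails Property T, and in fact such a factor, taken on its own, admits unbounded commensurating actions (for instance $\mathrm{SO}(m,1)$ acts properly on a CAT(0) cube complex through a reflection group), so no conclusion can be drawn from a single factor: all of the rigidity must be extracted from the irreducibility of $\Gamma$, that is, from the density of its projection to each proper subproduct of the $S_i$. My strategy would be a superrigidity scheme for commensurating actions modelled on the work of Shalom and Monod on reduced and bounded cohomology of irreducible lattices. Given a commensurated subset $M\subset X$, I would regard the associated wall structure as a $0$-median space (equivalently, a CAT(0) cube complex) on which $\Gamma$ acts, and seek a $\Gamma$-equivariant measurable boundary map from a Furstenberg-type boundary of $S$ into the Roller boundary of this complex.

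The key step would then be to use the product structure to force such a boundary map to factor through a single factor $S_i$, and to show that this factor must be one with Property T; combined with the Howe--Moore property of the $S_i$ (already exploited in Proposition \ref{issfm}) and the density of projections, this should make the function $g\mapsto\elk_M(g)$ bounded on $\Gamma$, so that $M$ is transfixed by the criterion of Brailovsky--Pasechnik--Praeger. For non-uniform lattices one must in addition transfer the commensurating action between $\Gamma$ and $S$, or argue directly on $\Gamma$ via an induced fundamental domain, which is where integrability of the induction cocycle would enter.

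The main obstacle I anticipate is precisely this boundary theory for median spaces: unlike the Hilbert-space setting underlying Properties FH and dFH, a commensurating action carries no linear structure, so the Shalom--Monod machinery cannot be transported verbatim. One must develop a workable notion of boundary map into the space of halfspaces (or into the Roller compactification), and prove the requisite factorization and non-degeneracy while controlling the rank-one factors $\mathrm{SO}(m,1)$ and $\mathrm{SU}(m,1)$, whose Haagerup property makes them the natural source of counterexamples in the absence of irreducibility. It is this median superrigidity, rather than any step involving the Property T factors, that keeps the statement at the level of a conjecture.
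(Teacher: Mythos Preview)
The statement you are attempting to prove is labelled \textbf{Conjecture} in the paper and is not proved there in full generality; the paper explicitly says the hard case (when $S$ has the Haagerup property, e.g.\ $\SL_2(\mathbf{R})^2$) remains open. So your proposal cannot be compared to a proof in the paper, because there is none. What the paper does prove is the partial case recorded as Theorem \ref{ipart}: if \emph{at least one} simple factor $S_i$ has Property~T, then $\Gamma$ has Property FW. This is strictly stronger than your easy case (where you assume \emph{every} factor has Property~T), and the paper's argument is different from yours: rather than invoking Property~T of the whole group, it uses that the projection $\Gamma\to S/S_i$ is a Kazhdan homomorphism (Theorem \ref{mbl}) together with the trivial fact that the connected group $S/S_i$ has Property FW, and then transfers FW back via Theorem \ref{fwres}.

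Your outline for the remaining case contains a factual slip that undermines the motivation you give. You write that a factor locally isomorphic to $\mathrm{SO}(m,1)$ ``admits unbounded commensurating actions'' and ``acts properly on a CAT(0) cube complex''. This is false for the connected Lie group $\mathrm{SO}(m,1)$ itself: in any continuous action on a discrete set the point stabilizers are open, hence equal to the whole connected group, so every orbit is a point and Property FW holds trivially (the paper notes this explicitly after Theorem \ref{ipart}). It is \emph{lattices} in $\mathrm{SO}(m,1)$ that can fail FW, and it is non-Archimedean rank-one groups that fail FW among the simple factors of algebraic type. So the difficulty is not that the ambient factors lack FW --- they all have it --- but that FW is not known to pass from $S$ to an irreducible lattice when no factor has Property~T. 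Your proposed boundary/median superrigidity scheme is a reasonable line of attack (and the paper cites related finite-dimensional results of Chatterji--Fernos--Iozzi as evidence), but as you yourself acknowledge, the required median-space boundary theory is not available; this is exactly why the paper leaves the statement as a conjecture.
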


The weaker Property FA for these groups was proved by Margulis \cite{Mar81}.
A similar (and more perilous) conjecture can be stated for Property FM:

\begin{conj}\label{conjm}
Let $S$ be a semisimple connected Lie group with at least two simple factors and no compact factor. Then every irreducible lattice in $S$ has Property FM.
\end{conj}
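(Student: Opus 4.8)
The plan is to reformulate Property FM spectrally and then try to import the rigidity of the ambient group $S$ across the lattice. First I would observe that a continuous discrete $\Gamma$-set $X$ carrying an invariant mean but no finite orbit produces, via the square-root trick applied to an approximately invariant net of probability measures in $\ell^1(X)$, a sequence of almost invariant unit vectors in $\ell^2(X)$; since the absence of a finite orbit forces every orbit to be infinite, the representation $\ell^2(X)=\bigoplus_i\ell^2(\Gamma/\Lambda_i)$ (with each $\Lambda_i$ of infinite index) has \emph{no} nonzero $\Gamma$-invariant vector. Thus Property FM amounts to excluding representations of this shape that almost have invariant vectors, and in the transitive case this is exactly the assertion that no infinite-index subgroup of $\Gamma$ is co-amenable. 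If $S$ has Property T---for instance when every simple factor has higher rank, or is locally isomorphic to $\mathrm{Sp}(n,1)$ or $F_{4(-20)}$---then $\Gamma$ inherits Property T and Property FM follows at once from Proposition~\ref{tfw}; so the genuine content lies in the presence of rank-one factors without Property T, such as $\mathrm{SO}(n,1)$ or $\mathrm{SU}(n,1)$.

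Next I would induce to the ambient group, forming $\tilde\pi=\mathrm{Ind}_\Gamma^S\,\ell^2(X)$. Because $\Gamma$ has finite covolume, induction sends almost invariant vectors to almost invariant vectors, while the Frobenius-type identity $\tilde\pi^{S}\cong\ell^2(X)^{\Gamma}=0$ shows that $\tilde\pi$ still has no $S$-invariant vector. The aim is then to exploit the product structure $S=S_1\cdots S_n$ together with the Howe--Moore property of each factor: a representation of $S_i$ without $S_i$-invariant vectors is $C_0$, so almost invariant vectors for $S_i$ are severely constrained, and one hopes that irreducibility of $\Gamma$---equivalently the density of the projection of $\Gamma$ to each proper subproduct---prevents the almost invariant vectors of $\tilde\pi$ from splitting along the factors, forcing genuine invariant vectors and a contradiction. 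An alternative, possibly cleaner, route is cohomological: since Property dFH of Definition~\ref{ddfh} implies Property FM for $\sigma$-compact groups, it would suffice to prove $H^1(\Gamma,\ell^2(\Gamma/\Lambda))=0$ for every subgroup $\Lambda$, which one would attack through Shalom's reduced-cohomology machinery for irreducible lattices in products.

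The main obstacle is precisely the failure of Property T for products of rank-one factors: at the level of $S$, almost invariant vectors need not produce invariant vectors, so no soft functional-analytic argument closes the gap and irreducibility must be used in an essential way. This is the same difficulty that makes the normal subgroup theorem hard, and indeed the transitive case of Property FM---that every co-amenable subgroup of $\Gamma$ has finite index---is a strengthening of the amenable half of the Margulis--Bader--Shalom normal subgroup theorem from normal subgroups to arbitrary subgroups. Two further technical points compound the difficulty: converting a merely finitely additive invariant mean into an object rigid enough for boundary-theoretic or mixing arguments, and treating the non-transitive case, where the almost invariant vectors of $\bigoplus_i\ell^2(\Gamma/\Lambda_i)$ may be diffused across infinitely many summands and so cannot be localized to a single co-amenable subgroup. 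For these reasons I expect the conjecture to be genuinely hard, which is consistent with the author flagging the FM version as ``more perilous'' than its FW counterpart.
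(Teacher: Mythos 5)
The statement you were asked about is Conjecture~\ref{conjm}; the paper offers no proof of it, and you rightly do not claim one. Your assessment of the landscape is essentially the paper's own: the spectral reformulation via almost invariant vectors in $\ell^2(X)$ is exactly Lemma~\ref{eyml2}, the Property~T case is Proposition~\ref{tfw}, and the genuinely open case is a product of rank-one factors without Property~T such as $\SL_2(\mathbf{R})^2$ (the paper singles out $\SL_2(\mathbf{Z}[\sqrt 2])$ as unknown). Two refinements are worth recording. First, the paper's actual partial result (Theorem~\ref{ipart}/\ref{somtfm}) is stronger than the case you dispose of: it suffices that \emph{one} simple factor $S_1$ have Property~T, because Theorem~\ref{mbl} makes the projection $\Gamma\to S'=S/S_1$ a Kazhdan homomorphism and Theorem~\ref{fmres} then pulls Property FM back from $S'$, which has it trivially by connectedness; this is a different mechanism from inducing $\ell^2(X)$ up to $S$ and invoking Howe--Moore. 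Second, your proposed cohomological route is circular as stated: Shalom's superrigidity (Theorem~\ref{sha}) only controls \emph{reduced} cohomology, and indeed the paper uses it to show that $\Gamma$ has Property \odFH{} (Proposition~\ref{narf0}); but by Guichardet's lemma and Proposition~\ref{dfhodfh}, upgrading $\overline{H^1}(\Gamma,\ell^2(X))=0$ to $H^1(\Gamma,\ell^2(X))=0$ is precisely equivalent to the spectral gap you are trying to establish, i.e.\ to Property FM (together with $\tau$). This is why the paper extracts from that machinery only the implication FM~$\Rightarrow$~FW for these lattices (Theorem~\ref{iccj}, via Proposition~\ref{pnarf} and Corollary~\ref{cowm}), not Property FM itself. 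Your closing diagnosis --- that irreducibility must enter in an essentially new way and that the diffuse, non-transitive case poses an extra localization problem --- is consistent with the conjecture remaining open.
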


The following theorem follows from Proposition \ref{pnarf} and Corollary \ref{cowm}.

\begin{thm}\label{iccj}
If $\Gamma$ is as in Conjecture \ref{conjs} and $X$ is a $\Gamma$-set with a commensurated subset not commensurate to any invariant subset, then $X$ has an invariant mean. In particular, Conjecture \ref{conjm} for $\Gamma$ implies Conjecture \ref{conjs} for $\Gamma$.
\end{thm}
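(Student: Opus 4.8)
The plan is to encode $M$ as an affine isometric action and then invoke cohomological rigidity special to irreducible lattices in products. First I would pass to the countable $\Gamma$-invariant subset of $X$ generated by $\bigcup_{g\in\Gamma}(M\tu gM)$; it carries an induced commensurated subset with the same function $\elk_M$, so we may assume $X$ countable and $\ell^2(X)$ separable. Let $\pi$ be the permutation representation of $\Gamma$ on $\ell^2(X)$ and set $b(g)=\mathbf 1_{gM}-\mathbf 1_M$. As $M$ is commensurated, each $b(g)$ is a finitely supported integer-valued function, so $b(g)\in\ell^2(X)$, and the identity $\mathbf 1_{ghM}-\mathbf 1_M=\pi(g)(\mathbf 1_{hM}-\mathbf 1_M)+(\mathbf 1_{gM}-\mathbf 1_M)$ shows that $b$ is a $1$-cocycle. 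Since $\|b(g)\|_2^2=\#(M\tu gM)=\elk_M(g)$, Corollary \ref{cabou} (after \cite{BPP}) gives that $M$ is transfixed iff $b$ is bounded, i.e.\ iff the associated affine action has bounded orbits, i.e.\ (by the Hilbert-space fixed point) iff $b$ is a coboundary. As $M$ is non-transfixed, $b$ is unbounded and $[b]\neq0$ in $H^1(\Gamma,\pi)$.

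The target is that $\ell^2(X)$ almost has invariant vectors, i.e.\ $1_\Gamma\prec\pi$. Indeed, if $\xi_n\in\ell^2(X)$ are unit vectors with $\|\pi(g)\xi_n-\xi_n\|\to0$ for each $g$, then the probability measures $\mu_n(A)=\sum_{x\in A}|\xi_n(x)|^2$ satisfy $\|g\mu_n-\mu_n\|_1\to0$, so any weak-$*$ cluster point of $(\mu_n)$ in $\ell^\infty(X)^*$ is a $\Gamma$-invariant mean on $X$. It therefore suffices to establish the dichotomy: for these $\Gamma$, either $\pi$ has almost invariant vectors or every $1$-cocycle on $\pi$ is bounded. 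Equivalently, I want to show that $1_\Gamma\not\prec\pi$ forces $\overline{H}^1(\Gamma,\pi)=0$; for then, by Guichardet's theorem, $1_\Gamma\not\prec\pi$ would give $H^1(\Gamma,\pi)=\overline{H}^1(\Gamma,\pi)=0$, contradicting $[b]\neq0$, whence $1_\Gamma\prec\pi$.

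Proving this vanishing is the main obstacle, and is where the product structure and irreducibility of $\Gamma$ must enter; note that $S$ need not have Property~T (e.g.\ $S=\SL_2(\mathbf R)^2$), so one cannot simply quote Property~T. Following Shalom's cohomological superrigidity for irreducible lattices, I would induce the class to the ambient group, obtaining a nonzero reduced class for an induced representation of $S$, group the factors as $S=S_1\times S_2$ with both parts non-compact, and apply the product decomposition of reduced first cohomology so that a nonzero contribution survives on one factor, valued in the vectors invariant under the other. Howe--Moore mixing for the complementary factor, together with the fact that nonvanishing of reduced first cohomology of a simple factor forces weak containment of its trivial representation, then shows that the coefficient representation weakly contains the trivial representation of a factor; irreducibility of $\Gamma$ propagates this to $1_\Gamma\prec\pi$, contradicting our standing assumption. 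The delicate points are the induction step for possibly non-cocompact lattices (requiring the appropriate square-integrability bookkeeping for $L^2$-induction) and the rank-one cohomological input, and I expect these to be the technically heaviest part.

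Finally, for the ``in particular'' clause I would argue by contradiction. Assume $\Gamma$ has Property~FM and that Property~FW fails. By the reduction of Property~FW to transitive actions (the Schreier-graph characterization recalled above, valid since $\Gamma$ is finitely generated), there is an open subgroup $H\leq\Gamma$ of infinite index together with a commensurated, non-transfixed subset of the infinite transitive $\Gamma$-set $X=\Gamma/H$. The main assertion just proved yields a $\Gamma$-invariant mean on $X$; but $X$ is a single infinite orbit, hence has no finite orbit, contradicting Property~FM. Therefore Property~FM implies Property~FW for $\Gamma$.
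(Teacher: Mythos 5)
Your overall architecture coincides with the paper's: encode the non-transfixed $M$ as an unbounded cocycle $b(g)=\mathbf{1}_{gM}-\mathbf{1}_M$ in $\ell^2(X)$, prove that $\overline{H^1}(\Gamma,\ell^2(X))=0$, conclude via Guichardet's lemma that $\ell^2(X)$ must almost have invariant vectors (else $H^1=\overline{H^1}=0$, contradicting $[b]\neq 0$), which yields the mean, and then get the ``in particular'' clause by restricting to a transitive infinite $\Gamma$-set. This is exactly the chain Proposition \ref{pnarf} $+$ Proposition \ref{narf0} $+$ Corollary \ref{cowm}.

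The one place where your write-up does not close is the vanishing $\overline{H^1}(\Gamma,\ell^2(X))=0$, and the mechanism you propose for it is not quite right. You correctly identify that the input is Shalom's superrigidity of reduced $1$-cohomology for square-integrable irreducible lattices (quoted in the paper as Theorem \ref{sha}; square-integrability of these lattices is Example \ref{exint}(4), via Margulis and Lubotzky--Mozes--Raghunathan). But the step you use to finish from there --- ``nonvanishing of reduced first cohomology of a simple factor forces weak containment of its trivial representation'' --- is false for rank-one factors such as $\SL_2(\mathbf{R})$ or $\textnormal{SO}(n,1)$, which admit unitary representations with nonzero (reduced) first cohomology that do not weakly contain the trivial representation; since the hard case of the theorem is precisely $S=\SL_2(\mathbf{R})^2$, this cannot be the argument. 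The paper's route is both correct and much cheaper: Shalom's theorem writes $b$, up to almost-coboundaries, as a sum of restrictions of cocycles $b_i\in Z^1(G_i,\pi_i)$, where $\pi_i$ is the subrepresentation of $G_i$-continuous vectors; by the $Q$-points computation (Proposition \ref{linac2}), $\pi_i\simeq\ell^2(X^{[G_i]})$ for a discrete set carrying a continuous $G_i$-action, which is a \emph{trivial} representation because $G_i$ is connected, so $\overline{H^1}(G_i,\pi_i)=\overline{\mathrm{Hom}}(G_i,\mathbf{R})=0$ since $G_i$ is simple. No Howe--Moore or weak-containment argument is needed at that stage. With that replacement (i.e.\ quoting Shalom's theorem and using connectedness of the factors), your proof matches the paper's.
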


Property T implies both Properties FW and FM and passes to lattices and therefore the conjecture obviously holds when $S$ has Property T. In fact, a weaker assumption is enough:

\begin{thm}[see Theorem \ref{somtfm}]\label{ipart}
If $S$ has at least one simple factor with Property T, then it satisfies both conjectures. 
\end{thm}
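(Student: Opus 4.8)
The plan is to reduce both assertions to Property FM and then establish FM by an induction argument. First, by Theorem \ref{iccj}, Property FM for $\Gamma$ already implies Property FW for $\Gamma$ (this is precisely the assertion that Conjecture \ref{conjm} implies Conjecture \ref{conjs}), so it suffices to prove that $\Gamma$ has Property FM. I would reformulate FM representation-theoretically: for a continuous discrete $\Gamma$-set $X$ with permutation representation $\lambda_X$ on $\ell^2(X)$, the existence of a $\Gamma$-invariant mean on $X$ is equivalent to $1_\Gamma \prec \lambda_X$ (weak containment of the trivial representation, i.e.\ existence of almost invariant vectors), while the existence of a finite orbit is equivalent to $1_\Gamma$ being a genuine subrepresentation of $\lambda_X$ (an invariant vector in $\ell^2(X)$ is necessarily supported on finite orbits). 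Thus Property FM for $\Gamma$ is exactly the implication ``$1_\Gamma \prec \lambda_X \Rightarrow 1_\Gamma \leq \lambda_X$'' for all permutation representations.

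Write $S = S_1 \cdots S_n$ with $S_1$ the factor having Property T and $S' = S_2\cdots S_n$ the (non-compact) complementary product; by irreducibility the projections $p_1(\Gamma)$ and $p'(\Gamma)$ of $\Gamma$ to $S_1$ and to $S'$ are dense. The key move is induction to $S$. Since unitary induction is continuous for weak containment and $\mathrm{Ind}_\Gamma^S 1_\Gamma = L^2(\Gamma\backslash S)$ contains the constants (as $\Gamma$ is a lattice, hence of finite covolume), $1_\Gamma \prec \lambda_X$ yields $1_S \prec \Pi$, where $\Pi := \mathrm{Ind}_\Gamma^S\lambda_X$. Decomposing $\lambda_X$ over the $\Gamma$-orbits and applying induction in stages gives the clean description $\Pi \cong \bigoplus_{O} L^2(\Gamma_x\backslash S)$, the sum being over orbits $O$ with point stabilizer $\Gamma_x$; here the summand $L^2(\Gamma_x\backslash S)$ contains $1_S$ precisely when $\Gamma_x$ has finite index in $\Gamma$, i.e.\ when $O$ is finite. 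The problem therefore becomes: show that $1_S \prec \bigoplus_O L^2(\Gamma_x\backslash S)$ forces some orbit to be finite, equivalently that an infinite orbit contributes a summand carrying no almost invariant vectors.

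To exploit the Property T factor I would first use the spectral gap of $S_1$ to pass to $S_1$-invariant vectors: since $S_1$ has Property T and the orthogonal complement of $\Pi^{S_1}$ carries no $S_1$-invariant vectors, almost invariant vectors can be pushed into $\Pi^{S_1} = \bigoplus_O L^2(\Gamma_x\backslash S/S_1)$ with vanishing error, so that in fact $1_S \prec \Pi^{S_1}$. As $S_1$ acts trivially there, this becomes a statement inside $S' \cong S/S_1$ about the closed subgroups $\overline{p'(\Gamma_x)}\le S'$. One then finishes using the Howe--Moore property of the factors of $S'$ together with irreducibility: density of the projections constrains the subgroups $\overline{p'(\Gamma_x)}$ attached to infinite orbits strongly enough that the corresponding $L^2$-spaces cannot weakly contain the trivial representation, exactly as in the reasoning behind Propositions \ref{itars} and \ref{issfm}, but carried out relatively for the chain $\Gamma_x \le \Gamma \le S$. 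The main obstacle is precisely this last step: translating the abstract weak containment back into a finite $\Gamma$-orbit, i.e.\ ruling out ``diffuse'' almost invariant vectors spread over infinitely many infinite orbits. The delicate point is that only one factor supplies a spectral gap, so for the remaining factors Howe--Moore gives only mixing and must be combined carefully with the constraints coming from irreducibility; this is also where non-uniform lattices require more care than the cocompact case, for which the conclusion is closer to the ambient statement of Proposition \ref{issfm}.
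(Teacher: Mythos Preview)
Your reduction of both conjectures to Property FM via Theorem~\ref{iccj} matches the paper's suggestion, but from that point on your route diverges substantially from the paper's argument, and your sketch leaves the decisive step unfinished.

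The paper does not induce to $S$ at all. Instead it writes $S=S_1\times S'$ with $S_1$ Kazhdan, observes that the pair $(S,S_1)$ has relative Property~T, and invokes Theorem~\ref{mbl} to conclude that the projection $\Gamma\to S'$ is a \emph{Kazhdan homomorphism} in the sense of \S\ref{kh}. Two ingredients then finish the job cleanly: (i)~$S'$ itself has Property FM by Proposition~\ref{ssfm} (Howe--Moore on each factor forces every open subgroup to be compact or cocompact, whence Proposition~\ref{tars} applies), and (ii)~Property FM transfers back along Kazhdan homomorphisms by Theorem~\ref{fmres}, whose proof rests on the $Q$-points formalism (Proposition~\ref{linac2}) rather than on induced representations. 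For Property FW there is a parallel direct argument via Theorem~\ref{fwres}, avoiding the detour through Theorem~\ref{iccj}.

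Your induction approach is in the spirit of Bekka--Olivier~\cite{BO}, which the paper cites as an independent proof of the FM part, but as written it has a genuine gap. After passing to $\Pi^{S_1}$ you assert that one ``finishes using Howe--Moore\dots\ exactly as in Propositions~\ref{itars} and~\ref{issfm}''; however those propositions concern \emph{open} subgroups of the ambient locally compact group, whereas your summands $L^2(\Gamma_x\backslash S/S_1)$ involve the typically non-closed images $p'(\Gamma_x)\subset S'$, and Howe--Moore yields decay of matrix coefficients, not a spectral gap. Ruling out almost invariant vectors spread over infinitely many such summands is exactly the content packaged by the Kazhdan-homomorphism machinery (or by the detailed arguments in~\cite{BO}); it is not a routine consequence of mixing plus irreducibility, and you have correctly flagged it as the obstacle without resolving it.
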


This follows from Theorem \ref{somtfm} for Property FM, combined with Theorem \ref{iccj} for Property FW (alternatively, for Property FW, a direct proof also follows from Theorem \ref{fwres}).
The Property FM part of this theorem was independently proved by Bekka and Olivier \cite{BO}.

Thus the hard case of the conjectures is when $S$ has the Haagerup Property, typically $\SL_2(\mathbf{R})^2$. Using bounded generation by unipotent elements, some non-cocompact lattices therein, e.g.\ $\SL_2(\mathbf{Z}[\sqrt{2}])$ are known to have Property FW \cite[Example 6.4]{CorFW} but Property FM is unknown (including the case of $\SL_2(\mathbf{Z}[\sqrt{2}])$, explicitly asked in \cite[4.I]{GM}), as well as Property FW for cocompact lattices.

Let us also mention that the method of Theorem \ref{ipart} (namely Theorem \ref{fwres}) also works in a non-Archimedean setting. However, the statement differs, because this makes a distinction between Property FW and FM. Theorem \ref{ipart} thus generalizes as:

\begin{thm}[See Theorem \ref{somtfm}]Let $G=G_1\dots G_n$ be a semisimple group of algebraic type (as in Definition \ref{algty}), with $n\ge 2$. Let $\Gamma$ be an irreducible lattice, in the sense that $G_i\Gamma$ is dense in $G$ for all $i$. Suppose that at least one of the $G_i$ has Property T. Then
\begin{itemize}
\item $\Gamma$ has Property FM, and
\item if moreover none of the $G_i$ is non-Archimedean of $\mathbf{K}_i$-rank~1, then $\Gamma$ has Property FW.
\end{itemize}
\end{thm}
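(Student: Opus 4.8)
The plan is to establish Property FM first and then deduce Property FW. For FM, suppose $\Gamma$ acts on a discrete set $X$ with an invariant mean; I want a finite orbit. I would first translate into representation theory via the standard Følner/Reiter dictionary: the existence of an invariant mean on $X$ is equivalent to the quasi-regular representation $\pi$ of $\Gamma$ on $\ell^2(X)$ having almost invariant vectors (weakly containing $1_\Gamma$), while the existence of a finite orbit is equivalent to $\pi$ having a nonzero invariant vector (the normalized indicator of the orbit; conversely, a $\Gamma$-invariant $\ell^2$-function has finite level sets, each a finite invariant union of orbits). So it suffices to produce a nonzero invariant vector in $\pi$. Since $\Gamma$ is a lattice, I would induce: $\tilde\pi=\mathrm{Ind}_\Gamma^G\pi$ weakly contains $\mathrm{Ind}_\Gamma^G 1_\Gamma=L^2(G/\Gamma)$, which contains the constants because $G/\Gamma$ has finite invariant measure, so $1_G\prec\tilde\pi$; and by Frobenius reciprocity the space of $G$-invariant vectors of $\tilde\pi$ is canonically $\pi^\Gamma$. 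Thus the whole problem reduces to producing a nonzero $G$-invariant vector in $\tilde\pi$.

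Here the Property T factor enters. Relabel so that $G_1$ has Property T. A sequence of almost $G$-invariant unit vectors is in particular almost $G_1$-invariant, so by the spectral gap form of Property T the orthogonal projection $P$ onto the subspace $W=\tilde\pi^{G_1}$ of $G_1$-fixed vectors moves them by a vanishing amount; since $G_1$ is normal in $G$, the space $W$ is $G$-invariant and $P$ commutes with the $G$-action, so the projected vectors $P\xi_n\in W$ remain almost $G$-invariant. This reduces me to the subrepresentation $W$, on which $G_1$ acts trivially and the $G$-action factors through $L=G_2\cdots G_n$. It remains to find a nonzero $L$-invariant vector in $W$. I would identify $W$ in the induced model as the right-$\Gamma$-equivariant $\ell^2$-sections over $G_1\backslash G$ and exploit irreducibility: density of $G_1\Gamma$ means $\Gamma$ projects densely to $G_1\backslash G$, so the residual $L$-action is mixing in the relevant sense. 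Feeding this into the Howe--Moore property of the remaining factors should force the almost invariant vectors to concentrate into a genuine $L$-invariant vector, hence a $G$-invariant vector in $\tilde\pi$, hence by Frobenius reciprocity a nonzero $\Gamma$-invariant vector in $\ell^2(X)$, i.e.\ a finite orbit.

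For Property FW under the extra hypothesis that no $G_i$ is non-Archimedean of $\mathbf{K}_i$-rank $1$, I would argue on commensurated subsets. Given a commensurated subset $M$ of a $\Gamma$-set $X$, I want it transfixed, i.e.\ (by the Brailovsky--Pasechnik--Praeger criterion, Corollary~\ref{cabou}) $\elk_M$ bounded on $\Gamma$. Under the rank hypothesis each $G_i$ is either Archimedean, hence connected, or non-Archimedean of rank $\ge 2$, hence has Property T. A connected group acts trivially on every continuous discrete set, since its open point-stabilizers must be the whole group; a Property T group has Property FW; and Property FW passes to the finite almost-direct product (which I would verify), so $G$ itself has Property FW. I would then induce the commensurating action to $G$, conclude that the induced commensurated subset is transfixed under $G$ (boundedness of the relevant $\elk$ on $G$), and descend this boundedness along $\Gamma\le G$ to get $\elk_M$ bounded on $\Gamma$. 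In the Archimedean case one can alternatively invoke Theorem~\ref{iccj}: a non-transfixed commensurated subset would yield an invariant mean, whence the FM part gives a finite orbit, which the rank hypothesis upgrades to transfixing.

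The main obstacle is the concentration step in the FM argument, namely closing the gap between almost invariant and invariant vectors on $W$ when only one factor has Property T: one must genuinely combine Howe--Moore decay on the non-Property-T factors with the density of the projection of $\Gamma$ supplied by irreducibility, and check that together they annihilate the residual almost invariance. This cross-factor reconciliation is the technical heart, and it is precisely what the irreducibility hypothesis exists to provide. Finally, the rank-$1$ non-Archimedean exclusion for FW is necessary rather than technical: such a factor acts on its Bruhat--Tits tree with unbounded orbits, and since $\Gamma$ projects densely it acts on this tree (a $1$-dimensional CAT(0) cube complex) with unbounded orbits, producing a commensurated non-transfixed half-tree; thus FW genuinely fails in that case, while, consistently with the first part, FM still holds.
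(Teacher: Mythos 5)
Your overall strategy has two genuine gaps, one in each bullet. For Property FM, the fatal step is the ``concentration'' at the end: the Howe--Moore property does \emph{not} close the gap between almost invariant vectors and invariant vectors. Groups such as $\SL_2(\mathbf{R})$ or $\SL_2(\mathbf{Q}_p)$ have Howe--Moore but not Property T, so they admit unitary representations with almost invariant vectors and no invariant vectors; nothing about the density of the projection of $\Gamma$ rules this out for the subrepresentation $W=\tilde\pi^{G_1}$. The deeper problem is that by inducing to $L^2$ you have destroyed the only structure that makes Howe--Moore usable here: in the paper, Howe--Moore is applied in Proposition \ref{ssfm} to \emph{open subgroups} (every stabilizer of a point in a continuous discrete $G'$-set is compact or cocompact), after which Lemma \ref{multia} forces amenability; this argument needs a genuine discrete $G'$-set, and $\mathrm{Ind}_\Gamma^G\ell^2(X)$ is not of that form. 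The paper's route keeps the set $X$: it shows the projection $\Gamma\to G'=G/G_i$ is a Kazhdan homomorphism (Theorem \ref{mbl}, where the relative Property T of $(G,G_i)$ and the lattice/irreducibility hypotheses do the cross-factor work you correctly identify as the heart), passes to the $Q$-reduction $X^{[Q]}$, which is a continuous discrete $G'$-set with $\ell^2(X^{[Q]})=\ell^2(X)^{G'}$ (Proposition \ref{linac2}), and then invokes Property FM of $G'$ via Theorem \ref{fmres}.

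The FW part is also unsalvageable as written. There is no induction of commensurating actions from $\Gamma$ to $G$ along which transfixing descends: if there were, every lattice in a connected Lie group would inherit Property FW from the (trivially FW) ambient group, which is false --- free lattices in $\SL_2(\mathbf{R})$ fail even Property FA. Property FW, unlike Property T, simply does not pass to lattices, and the rank hypothesis giving FW for $G$ itself buys you nothing directly. The paper instead observes that under the rank hypothesis $G'=G/G_i$ has Property dFH (Proposition \ref{ssfm}), and transfers dFH --- hence FW --- back to $\Gamma$ along the same Kazhdan homomorphism (Theorem \ref{fmres}; alternatively Theorem \ref{fwres} transfers FW directly). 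Your closing remark on the necessity of excluding non-Archimedean rank-one factors, via the action on the Bruhat--Tits tree, is correct and matches the paper's example.
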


\begin{exe}Fix a prime $p$. For any $\mathbf{Q}$-form $G$ of $\textnormal{SO}_5$, such that $G$ has $\mathbf{R}$-rank 2 and $\mathbf{Q}_p$-rank 1, consider the group $\Gamma=G(\mathbf{Z}[1/p])$. It is an irreducible lattice in $G(\mathbf{R})\times G(\mathbf{Q}_p)$. Then $\Gamma$ has Property FM; however it does not have Property FW and not even Property FA, as it is a dense subgroup of the noncompact group $G(\mathbf{Q}_p)$, which has a proper isometric action on a tree.
\end{exe}

The idea behind this example is that for irreducible lattices in products with reasonable hypotheses, Property FA or FM of the lattice is known or expected to follow from the same property for the ambient group. For instance, the previous example uses that if $S$ is simple of rank 1 over a (non-archimedean) local field, then $S$ has Property FM (Proposition \ref{issfm}) but not FW, while both of these properties are trivially satisfied by connected groups, which is a heuristic evidence towards the previous conjectures. This also shows the interest in defining this for locally compact groups and not only in the discrete setting; this will be used in the proof of Theorem \ref{iccj}.

\begin{rem}
It has been obtained by Chatterji, Fernos and Iozzi \cite{CFI}
that a group $\Gamma$ as in Conjecture \ref{conjs} admits no ``non-elementary" action on a {\em finite-dimensional} CAT(0) cube complex. (I put quotation marks because the terminology ``non-elementary" is misleading, as for this terminology elementary actions may contain non-elementary actions as subactions or quotient actions, and non-elementary should rather be interpreted as a kind of irreducibility assumption.) Pierre-Emmanuel Caprace indicated me (private communication) how this result can be used to prove the improved statement that such a group $\Gamma$ has no {\em unbounded} action on any finite-dimensional CAT(0) cube complex (this is planned to be an appendix to \cite{CFI}). This provides some further evidence for Conjecture \ref{conjs}.
\end{rem}

\begin{rem} It follows from a result of Napier and Ramachandran \cite{NR} that if $S$ is a semisimple group of rank $\ge 2$ and of Hermitian type (i.e.\ the associated symmetric space is Hermitian), and $\Gamma$ is an irreducible lattice in $S$ then every Schreier graph of $\Gamma$ has at most 2 ends. (Recall that a finitely generated group $G$ has Property FW if and only if every Schreier graph of $G$ has at most 1 end.)
\end{rem}

\subsection{Further results and questions}

I do not know any finitely generated group with Property FW but not FM, but this should certainly exist (as well as infinite amenable groups with Property FW, see Question \ref{quesfm}(\ref{qamfw})). Note that on the other hand, there exist uncountable discrete groups with Property FW but not FM (see Remark \ref{fwsb}).

There are natural weakenings FW' and FM' of Properties FW and FM, obtained by using the same definition but restricting to transitive actions. The question of finding a countable group with Property FW' but not FW (resp.\ FM' but not FM) is not straightforward; still it is solved in \S\ref{sl3q}:

\begin{thm}\label{slq}
The group $\SL_n(\mathbf{Q})$ for $n\ge 3$ has Properties FM' and FW' (but not Properties FM and FW).
\end{thm}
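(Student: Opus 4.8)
The plan separates the (easy) failure of the unrestricted properties from the (substantial) transitive versions.

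\textbf{Failure of FM and FW.} First I would note that $\SL_n(\mathbf{Q})$ is not finitely generated: any finite set of matrices involves only finitely many primes among the denominators, hence lies in some $\SL_n(\mathbf{Z}[1/N])\subsetneq\SL_n(\mathbf{Q})$. By Proposition~\ref{fmcg} a discrete group with Property FM is finitely generated, so $\SL_n(\mathbf{Q})$ cannot have FM. Likewise a countable group with Serre's Property FA is finitely generated, so $\SL_n(\mathbf{Q})$ fails FA; since FW is a strengthening of FA, it fails FW as well. For the positive direction I would use that $\mathrm{PSL}_n(\mathbf{Q})$ is simple, so $\SL_n(\mathbf{Q})$ has no proper finite-index subgroup; thus in a transitive action a finite orbit forces the space to be a point, and FM$'$ becomes ``there is no proper coamenable subgroup'' while, via the dictionary between transitive commensurating actions and commensurated subgroups, FW$'$ becomes ``every commensurated subgroup is transfixed''. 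I would fix the increasing chain $K_j=\SL_n(\mathbf{Z}[1/(p_1\cdots p_j)])$ with union $\SL_n(\mathbf{Q})$; each $K_j$ is an irreducible higher-rank $S$-arithmetic lattice, hence for $n\ge 3$ has Property~T, so has both FM and FW, and satisfies Margulis' normal subgroup theorem (its normal subgroups are finite or of finite index).

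\textbf{The globalization lemma.} The arithmetic core is elementary: a subgroup $B\le(\mathbf{Q},+)$ meeting every $R_j:=\mathbf{Z}[1/(p_1\cdots p_j)]$ in finite index equals $\mathbf{Q}$. Indeed $B\cap R_j=c_jR_j$ with $c_j$ a positive integer coprime to $p_1\cdots p_j$; comparing $B\cap R_j$ with $(B\cap R_{j'})\cap R_j$ for $j\le j'$ forces $c_j=c_{j'}$, so all $c_j$ equal one constant $c$ coprime to every prime, whence $c=1$ and $B\supseteq\bigcup_jR_j=\mathbf{Q}$. Applying this inside each elementary root subgroup $e_{ik}(\mathbf{Q})\cong\mathbf{Q}$ gives the form I need: a subgroup $L\le\SL_n(\mathbf{Q})$ that is of finite index in $K_j$ for all large $j$ contains every $e_{ik}(\mathbf{Q})$, and therefore equals $\SL_n(\mathbf{Q})$.

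\textbf{Property FW$'$.} Let $L$ be a commensurated subgroup. For each $j$ the intersection $L\cap K_j$ is commensurated in $K_j$, so by Property FW of $K_j$ it is commensurate to a normal subgroup, which by the normal subgroup theorem is finite or of finite index; hence $L\cap K_j$ is itself finite or of finite index in $K_j$. Since the $L\cap K_j$ increase, either they are finite for all $j$ or of finite index for all large $j$. In the first case $L$ is locally finite, hence (by Minkowski's/Jordan--Schur bound on torsion in $\GL_n(\mathbf{Q})$) finite, so transfixed; in the second case the globalization lemma gives $L=\SL_n(\mathbf{Q})$, again transfixed. Thus every commensurated subgroup is transfixed and FW$'$ holds.

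\textbf{Property FM$'$ and the main obstacle.} Given a transitive $X=\SL_n(\mathbf{Q})/H$ with invariant mean $m$, I would observe that $m$ is $K_j$-invariant and that $K_j$ has FM, so renormalising $m$ on the $K_j$-invariant set of infinite $K_j$-orbits would contradict FM of $K_j$; hence $m$ gives full mass to the (decreasing) $K_j$-invariant sets $X_j$ of points with finite $K_j$-orbit. A point lying in \emph{all} the $X_j$ has stabiliser of finite index in every $K_j$, so by the globalization lemma it is a global fixed point, forcing $X$ to be a single point, as desired. The hard part, which I expect to be the genuine obstacle, is producing such a point: each $X_j$ has full mean, but since $m$ is only finitely additive their intersection may be empty, so the finite-orbit information supplied by Property~T at each level is a priori carried by \emph{different} orbits as $j$ grows (were some single orbit to have positive mass one could conjugate and conclude at once). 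Resolving this requires turning the finitely-additive data into synchronised data --- e.g.\ by using Property~T quantitatively, so that almost-invariant vectors concentrate on the finite-orbit part, together with the ambient semisimple groups $\SL_n(\mathbf{R})\times\prod_{i\le j}\SL_n(\mathbf{Q}_{p_i})$ and their Howe--Moore mixing --- to rule out that the finite-orbit threads all escape to infinity. Everything else being the soft reductions above, this synchronisation is where the real work lies.
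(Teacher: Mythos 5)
Your negative half (failure of FM and FW via infinite generation and Proposition \ref{fmcg}) is correct and matches the paper. The positive half, however, has two genuine gaps. For FM$'$ you have honestly flagged the problem yourself: the mean gives full mass to each set $X_j$ of points with finite $K_j$-orbit, but since a mean is only finitely additive you cannot intersect these sets, and the ``synchronisation'' you defer is exactly the mathematical content of the theorem. The paper supplies it not by working with the exhaustion $K_j=\SL_n(\mathbf{Z}[1/(p_1\cdots p_j)])$ at all, but by viewing $\SL_n(\mathbf{Q})$ as an irreducible lattice in $\SL_n(\mathbf{R})\times\SL_n(\mathbf{A})$: since $\SL_n(\mathbf{R})$ has Property T for $n\ge 3$, Theorem \ref{mbl} makes the dense projection $\SL_n(\mathbf{Q})\to\SL_n(\mathbf{A})$ a Kazhdan homomorphism, and the $Q$-point machinery (Theorem \ref{fmres}, resp.\ the dFH$'$ argument of Proposition \ref{prop620}) transfers Property FM$'$ (resp.\ FW$'$) back from $\SL_n(\mathbf{A})$, whose open subgroups are classified in Lemma \ref{opena}. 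That transfer is precisely the quantitative use of Property T you were hoping for.

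The FW$'$ branch has a more serious flaw: the ``dictionary'' you invoke is false. Property FW$'$ concerns commensurated \emph{subsets} of transitive $G$-sets $G/H$, not commensurated \emph{subgroups} of $G$; the action of $\mathbf{Z}$ on itself with $M=\mathbf{N}$ is a commensurated, non-transfixed subset of a transitive $G$-set that corresponds to no subgroup data whatsoever, and nothing about the absence of finite-index subgroups repairs this. Moreover, even for the subgroup statement you substitute, the claim that Property FW forces a commensurated subgroup to be commensurate to a normal one is wrong: $\SL_3(\mathbf{Z})$ is a commensurated subgroup of $\SL_3(\mathbf{Z}[1/p])$ (preimage of the compact open subgroup $\SL_3(\mathbf{Z}_p)$), the ambient group has Property T and hence FW, yet by Margulis' normal subgroup theorem every normal subgroup is finite or of finite index, so $\SL_3(\mathbf{Z})$ is commensurate to no normal subgroup. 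So your dichotomy ``$L\cap K_j$ finite or of finite index'' does not follow, and the argument collapses at its first step. Your globalization lemma for subgroups of $(\mathbf{Q},+)$ is fine as far as it goes, but it is answering the wrong question.
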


The proof of Theorem \ref{slq} makes use of the embedding of $\SL_n(\mathbf{Q})$ as a lattice in $\SL_n$ of the product of the ring of adeles with $\mathbf{R}$, and Property T for $\SL_n(\mathbf{R})$.

Some applications of Properties FM and FW to groups of permutations with bounded displacement are given in \S\ref{appfm}; a particular case is the following:

\begin{thm}
Let $\Gamma$ be a finitely generated group with Property FM or FW. Then any action of $\Gamma$ on $\mathbf{Z}$ by permutations of bounded displacement factors through a finite group.
\end{thm}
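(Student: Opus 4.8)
The plan is to read the action as a commensurating action and as an amenable action simultaneously, and then feed it into each of the two hypotheses. Write $\rho\colon\Gamma\to\mathrm{Sym}(\mathbf Z)$ for the action and fix a finite generating set whose images have displacement bounded by some $C$, so that every $\rho(g)$ has finite displacement. The starting observation is that the half-line $M=\mathbf Z_{\ge 0}$ is commensurated: since $\rho(g)$ moves each point a bounded distance, $M\tu\rho(g)M$ lies in a bounded window around $0$ and is finite, so $\elk_M(g)<\infty$ for all $g$. Symmetrically, the intervals $I_n=[-n,n]$ form a F\o lner sequence for the action, because $\#(\rho(g)I_n\tu I_n)$ is bounded (by the displacement of $g$) while $\#I_n\to\infty$; a weak-$*$ limit of the normalized indicator functions of the $I_n$ is therefore a $\Gamma$-invariant mean on $\mathbf Z$.

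For Property FM I would argue as follows. The image $Q=\rho(\Gamma)\le\Wob(\mathbf Z)$ is amenable: the group of bounded-displacement permutations of $\mathbf Z$ contains no nonabelian free subgroup (bounded displacement forbids the expanding dynamics needed for ping-pong) and is in fact known to be amenable. Being a quotient of $\Gamma$, the group $Q$ inherits Property FM from its stability under quotients (Fact \ref{squot}). It then suffices to observe that an infinite amenable group cannot have Property FM: acting on itself by left translation it has an invariant mean coming from amenability but no finite orbit. Hence $Q$ is finite and $\rho$ factors through a finite group.

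For Property FW the amenability argument is unavailable, since Property FW is compatible with amenability (cf.\ Question \ref{quesfm}); instead I would exploit the linear order. Property FW forces $M$ to be transfixed, so by the Brailovsky--Pasechnik--Praeger criterion (Corollary \ref{cabou}) the function $\elk_M$ is bounded. The signed quantity $g\mapsto\#(\rho(g)M\setminus M)-\#(M\setminus\rho(g)M)$ is a homomorphism $\Gamma\to\mathbf Z$, namely the net flow across $0$ (independent of the chosen cut because $\rho(g)$ is a bijection), and its boundedness makes it trivial. Transfixing $M$ also yields a $\Gamma$-invariant set $N$ with $[a,\infty)\subseteq N\subseteq[b,\infty)$; running the same argument with $\mathbf Z_{\le 0}$ gives a complementary invariant half-line, so that, up to a finite set, $\Gamma$ preserves a partition of $\mathbf Z$ into a left ray and a right ray, each end being preserved because no bounded-displacement permutation can interchange the two ends of $\mathbf Z$.

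The hard part is the FW endgame: upgrading \emph{flow zero, both half-lines transfixed, ends preserved} to finiteness of $Q$. The mechanism should be that an infinite image forces, somewhere along the line, a commensurated subset that is not transfixed, of $D_\infty$- or $\mathbf Z$-type --- exactly the configuration Property FW excludes. Concretely I would argue that if $Q$ is infinite then, after passing to the invariant ray $N$ and discarding the finite transition region, the residual action still has unbounded displacement, and one can cut $N$ at a suitably escaping sequence of points to manufacture a commensurated subset whose $\elk$-function is unbounded, contradicting Corollary \ref{cabou}. Making this cut canonical and sufficiently $\Gamma$-equivariant that it is genuinely commensurated --- rather than merely commensurated by individual elements --- is the main obstacle, and is where one-dimensionality (a coarse linear order preserved up to bounded error) is indispensable; in higher-dimensional wobbling groups the analogous statement fails.
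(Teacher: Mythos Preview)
Your FM argument rests on the assertion that $\Wob(\mathbf{Z})$ is amenable (or at least contains no nonabelian free subgroup). This is not known; it is a well-known open problem, and the hand-wave ``bounded displacement forbids ping-pong'' is not a proof. So the FM case has a genuine gap. The paper does not go through amenability of the image at all. Instead it first observes that every orbit, being a subset of $\mathbf{Z}$, has subexponential (indeed linear) ball growth and therefore carries an invariant mean; Property FM then forces every orbit to be finite. The remaining issue --- orbits finite but of unbounded size --- is handled by an ultralimit: re-basing at points $z_n$ whose orbits have size tending to infinity produces a new bounded-displacement $\Gamma$-action on (an ultralimit copy of) $\mathbf{Z}$ in which the base-point has an infinite orbit, contradicting the first step.

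Your FW argument is honest about where it breaks, and it does break there. Transfixing $\mathbf{Z}_{\ge 0}$ in the \emph{given} action is simply not enough: there are bounded-displacement actions (e.g.\ a generator acting as a disjoint union of cycles of lengths $1,3,5,\dots$ with displacement $\le 2$, as in the paper's final remark) in which $\mathbf{Z}_{\ge 0}$ \emph{is} transfixed yet the image is infinite. So no amount of cutting the original $\mathbf{Z}$ will produce the non-transfixed commensurated subset you want. The paper's move is again an ultralimit: choose $z_n$ whose orbit meets both $(-\infty,z_n-n]$ and $[z_n+n,\infty)$, pass to the ultralimit, and obtain a bounded-displacement $\Gamma$-action on $\mathbf{Z}$ in which the orbit of $0$ is unbounded in both directions. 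In \emph{that} action, any invariant subset commensurate to $\mathbf{N}$ would have to both contain and miss points of this two-sided orbit infinitely often, which is impossible; so $\mathbf{N}$ is not transfixed and FW fails. The ultralimit is precisely the device that converts ``orbits of unbounded size'' into ``a single infinite orbit'', and it is the missing idea in both halves of your proposal.
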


Let us end this introduction by some further questions.

\begin{que}\label{quesfm}~
\begin{enumerate}
\item Consider R.~Thompson's groups $T$ and $V$ of the circle and the Cantor set. Do $T$ and $V$ have Property FM?
\item\label{qamfw} Does there exist an infinite finitely generated amenable group with Property FW?
\item Does there exist a finitely generated group without Property FM, but for which every infinite Schreier graph has exponential growth? (For a finitely generated group with Property FM, every infinite Schreier graph has exponential growth.) 
\item Does there exist a finitely generated group with nonzero first $\ell^2$-Betti number and with Property FW?
\item\label{meana} Given a group action on a connected median graph (or equivalently on a CAT(0) cube complex), give a geometric characterization of the non-existence of an invariant mean on the set of proper halfspaces. 
\end{enumerate}
\end{que}

In the case of unbounded actions on trees, (\ref{meana}) has a simple answer: there is no mean if and only if the action is of general type, i.e.\ has no invariant axis, or point at infinity; this is used \cite[Proposition 9.1]{Sha} by Shalom to prove a superrigidity statement for actions on trees.

\medskip

\noindent {\bf Acknowledgements.} I thank Bachir Bekka for discussions and pointing out \cite{BO}; I also thank Alessandra Iozzi and Alain Valette for some useful references. I am grateful to the referee for pointing out many inaccuracies and for valuable suggestions.

\setcounter{tocdepth}{1}    
\tableofcontents

\section{Preliminaries}

\subsection{Affine $\ell^p$ action}\label{ala}

We here recall some classical material, following \cite{CorFW}. We fix $p\in [1,\infty\mathclose[$.  Fix a locally compact group $G$ with a continuous action on a discrete set $X$ and a commensurated subset $M\subset X$. 

Define \[\ell^p_M(X)=\{f\in\mathbf{R}^X:f-\mathbf{1}_M\in\ell^p(X)\}.\] It is endowed with a canonical structure of an affine space over $\ell^p(X)$ and the corresponding distance. It only depends on the commensuration class of $M$. That $M$ is commensurated implies that $\ell^p_M(X)$ is invariant under the natural action of $G$ on $\mathbf{R}^X$. The action of $G$ on $\ell^p_M(X)$ (endowed with the $\ell^p$-distance) is isometric and continuous; we have
\[\|\mathbf{1}_M-g\mathbf{1}_M\|_p^p=\#(M\tu gM),\quad \forall g\in G.\]

\subsection{$Q$-points}\label{uq}

We need some material essentially borrowed from \cite[Section 4]{Cor}. Let $H$ be a group (regardless of any topology on $H$), and we fix a homomorphism $\pi$ from $H$ to a Hausdorff topological group $Q$, with dense image.

Define $\mathcal{V}_Q$ as the set of subsets of $H$ containing $\pi^{-1}(V)$ for some neighborhood $V$ of 1 in $Q$.

\begin{defn}\label{xqq}
Let $H$ act by isometries on a metric space $D$.  We define the subset of $Q$-points in $D$ as
\[D^Q=\left\{x\in D:\;\inf_{V\in\mathcal{V}_Q}\sup_{g\in V}d(x,gx)=0\right\}.\]
\end{defn}

\begin{rem}\label{dqdis}
If $D$ is discrete then 
$$D^{Q}=\left\{x\in D:\;\exists V\in\mathcal{V}_Q: V\subset H_x\right\},$$ where $H_x\subset H$ is the stabilizer of $x$. Note that this does not depend on the discrete $H$-invariant distance on $D$.
\end{rem}

\begin{lem}\label{wcloq}
The subset $D^{Q}$ is closed in $D$ and $H$-invariant. If moreover $D$ is a normed real vector space and the action of $H$ is linear then $D$ is a closed linear subspace.
\end{lem}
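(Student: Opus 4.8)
The plan is to verify the three assertions separately, the whole argument resting on two structural features of the family $\mathcal{V}_Q$: it is a filter (stable under finite intersections) and it is invariant under conjugation by elements of $H$. Both follow at once from the definition together with the fact that $Q$ is a topological group, and the conjugation-invariance is the one genuinely substantive input; the rest is the filter property and the triangle inequality. For the filter property, if $V_i\supseteq\pi^{-1}(W_i)$ with $W_i$ a neighborhood of $1$ in $Q$ ($i=1,2$), then $V_1\cap V_2\supseteq\pi^{-1}(W_1\cap W_2)$ and $W_1\cap W_2$ is again a neighborhood of $1$; this lets me fold finitely many smallness conditions into one.

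For $H$-invariance, fix $x\in D^Q$ and $h\in H$. Since $h$ acts by isometries, for any $V\subset H$ I may rewrite $\sup_{g\in V}d(hx,g\,hx)=\sup_{g'\in h^{-1}Vh}d(x,g'x)$. The key point is that $V\mapsto h^{-1}Vh$ is a bijection of $\mathcal{V}_Q$ onto itself: if $V\supseteq\pi^{-1}(W)$ then $h^{-1}Vh\supseteq\pi^{-1}\big(\pi(h)^{-1}W\pi(h)\big)$, and since conjugation by $\pi(h)$ is a homeomorphism of $Q$, the set $\pi(h)^{-1}W\pi(h)$ is a neighborhood of $1$. Hence the defining infimum for $hx$ equals that for $x$, namely $0$, so $hx\in D^Q$.

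For closedness, take a point $x$ in the closure of $D^Q$ and fix $\eps>0$. Choosing $x'\in D^Q$ with $d(x,x')<\eps/3$ and then $V\in\mathcal{V}_Q$ with $\sup_{g\in V}d(x',gx')<\eps/3$, the triangle inequality together with $d(gx',gx)=d(x',x)$ gives $\sup_{g\in V}d(x,gx)\le\eps$. As $\eps$ is arbitrary the infimum vanishes, so $x\in D^Q$.

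Finally, when $D$ is a normed real vector space and $H$ acts linearly, I would check that $D^Q$ is a linear subspace; its closedness then follows from the first part. It contains $0$ because $g\cdot 0=0$ for every $g$. Given $x,y\in D^Q$ and $\eps>0$, choose $V_1,V_2\in\mathcal{V}_Q$ with $\sup_{g\in V_1}\|x-gx\|<\eps/2$ and $\sup_{g\in V_2}\|y-gy\|<\eps/2$; on $V=V_1\cap V_2\in\mathcal{V}_Q$ one has $\|(x+y)-g(x+y)\|\le\|x-gx\|+\|y-gy\|<\eps$, whence $x+y\in D^Q$. Stability under scalar multiplication is the identical estimate using $\|\lambda x-g(\lambda x)\|=|\lambda|\,\|x-gx\|$. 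This completes the argument, the only non-routine ingredient being the conjugation-invariance of $\mathcal{V}_Q$ used for $H$-invariance.
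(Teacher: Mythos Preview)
Your proof is correct and follows essentially the same approach as the paper. The closedness argument is identical up to cosmetic choices (you use $\eps/3$ where the paper uses $\eps$ and gets $3\eps$ at the end); for $H$-invariance and the linear-subspace assertion the paper simply writes ``the other assertions are clear,'' whereas you spell them out via the conjugation-invariance and filter properties of $\mathcal{V}_Q$, which is exactly the natural way to unpack those claims.
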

\begin{proof}
Let $x_n\in D^{Q}$ converge to $x\in D$ and let us check that $x\in D^Q$.
Fix $\eps>0$. Let us fix $n$ such that $d(x_n,x)\le\eps$, so by the triangle inequality we get, for all $g\in H$
$$d(gx,x)\le d(gx,gx_n)+d(gx_n,x_n)+d(x_n,x)\le d(gx_n,x_n)+2\eps;$$
since $x_n\in D^{Q}$, there exists $V\in\mathcal{V}_Q$ such that for every $g\in V$ we have $d(x_n,gx_n)\le\eps$. So for all $g\in V$ we obtain $d(gx,x)\le 3\eps$. Thus $x\in D^{Q}$. So $D^{Q}$ is closed. (Note that the proof extends to the case when the action is by uniformly bilipschitz maps.)

The other assertions are clear.\end{proof}

The motivation of the previous definitions is the following fact \cite[Propositions 4.1.2 and 4.1.3]{Cor}.

\begin{prop}\label{cext}
Let $H$ act by isometries on a metric space $D$. Then the action of $H$ on $D^Q$ uniquely factors through a continuous action of $Q$.
\end{prop}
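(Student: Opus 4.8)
The plan is to build the $Q$-action pointwise by a density-and-completion argument, extracting the necessary uniform continuity from the defining property of $D^Q$. \emph{Uniqueness} is the easy half: since $\pi(H)$ is dense in $Q$, any continuous $Q$-action restricting to the given $H$-action is forced to satisfy $q\cdot x=\lim_i\pi(h_i)\cdot x=\lim_i h_ix$ along any net $(h_i)$ in $H$ with $\pi(h_i)\to q$; as $D^Q$ is a metric (hence Hausdorff) space, this limit is unique when it exists, so at most one continuous factorization is possible. The entire content is therefore to make sense of this limit and to check that it defines a continuous action.

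First I would fix $x\in D^Q$ and analyze the orbit map $\phi_x\colon H\to D$, $h\mapsto hx$, after endowing $H$ with the uniform structure pulled back from $Q$ through $\pi$. The key step is that $\phi_x$ is \emph{uniformly continuous} for this structure: by Definition \ref{xqq}, given $\eps>0$ there is a neighborhood $W$ of $1$ in $Q$ with $d(x,gx)\le\eps$ for all $g\in\pi^{-1}(W)$, and since $H$ acts isometrically, $d(hx,h'x)=d(x,h^{-1}h'x)\le\eps$ whenever $\pi(h)^{-1}\pi(h')\in W$, which is exactly the desired estimate. In particular $\phi_x$ is constant on the fibres of $\pi$ (take $\pi(h)=\pi(h')$, so $h^{-1}h'\in\pi^{-1}(W)$ for every $W$, forcing $hx=h'x$), so it descends to a uniformly continuous map on the dense subspace $\pi(H)\subseteq Q$. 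Consequently, for $\pi(h_i)\to q$ the net $(h_ix)$ is Cauchy, and any two nets with limit $q$ are asymptotic; hence the limit, once known to exist, is independent of the net, and I set $q\cdot x:=\lim_i h_ix$.

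The main obstacle is \emph{convergence}: the Cauchy net $(h_ix)$ actually converges in $D$ only because the ambient space is complete, which is the situation in all our applications (e.g. $\ell^p_M(X)$ is complete, being an affine space over the Banach space $\ell^p(X)$). This is the delicate point, since for a genuinely non-complete $D$ the limit could escape $D^Q$. Granting completeness, each $h_ix$ lies in $D^Q$ because $D^Q$ is $H$-invariant by Lemma \ref{wcloq}, and then $q\cdot x$ lies in $D^Q$ because $D^Q$ is closed in $D$, again by Lemma \ref{wcloq}. Taking the constant net shows the construction extends the $H$-action, and passing to the limit in $d(h_ix,h_iy)=d(x,y)$ shows each $q$ acts by an isometry of $D^Q$.

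It remains to verify the action axioms and joint continuity. For $1\cdot x=x$ one uses that $\pi(h_i)\to1$ forces $d(h_ix,x)\le\eps$ eventually, directly from Definition \ref{xqq}; for $(qq')\cdot x=q\cdot(q'\cdot x)$ one evaluates both sides along product nets with $\pi(h_ih'_j)=\pi(h_i)\pi(h'_j)$ and interchanges the limits using continuity of each isometry $h_i$ together with the uniform continuity established above. Finally, joint continuity of $Q\times D^Q\to D^Q$ follows by splitting $d(q\cdot x,q_0\cdot x_0)\le d(q\cdot x,q\cdot x_0)+d(q\cdot x_0,q_0\cdot x_0)$: the first term equals $d(x,x_0)$ since $q$ acts isometrically, and the second tends to $0$ as $q\to q_0$ because the orbit map $q\mapsto q\cdot x_0$ is the continuous extension to $Q$ of the uniformly continuous map on $\pi(H)$ constructed above. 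This produces the desired unique continuous factorization of the $H$-action through $Q$.
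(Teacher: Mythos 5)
Your uniqueness argument is the same as the paper's (density of $\pi(H)$ plus the fact that $D$ is Hausdorff), and your existence argument --- uniform continuity of the orbit map $h\mapsto hx$ for the uniformity pulled back from $Q$, constancy on the fibres of $\pi$, and extension by limits of Cauchy nets, with $H$-invariance and closedness of $D^Q$ (Lemma \ref{wcloq}) guaranteeing the limit stays in $D^Q$ --- is a correct, self-contained proof \emph{when $D$ is complete}. That is exactly the case the paper does not reprove but delegates to \cite[\S 4.1]{Cor}, and it covers every space to which Proposition \ref{cext} is actually applied later ($\mathcal{F}(X)$ and $\mathcal{F}_M(X)$ with the discrete metric, $\ell^p(X)$, $\ell^p_M(X)$, and $X^{[Q]}$ with the trivial metric are all complete). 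So for the complete case you have supplied more detail than the paper does.

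The gap is that the proposition carries no completeness hypothesis, and your proposal explicitly stops at ``granting completeness''. The entire visible content of the paper's proof of existence is the reduction of the general case to the complete one: let $Y$ be the completion of $D$; since the condition defining $Q$-points involves only the distances $d(x,gx)$ between points of $D$, one has $D^Q=Y^Q\cap D$; the complete case yields a continuous $Q$-action on $Y^Q$, which the paper then restricts to $D^Q$. That reduction is the step missing from your write-up. Note, moreover, that the concern you raise --- that for non-complete $D$ the limit $\lim_i h_ix$ may escape $D$ --- does not disappear after passing to $Y$: it reappears as the need to check that the $Q$-orbit of a point of $D^Q$ stays in $D$ and not merely in $Y^Q$, which is the content of the paper's final ``clearly ... restricts'' and deserves an explicit justification if you complete the argument this way.
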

\begin{proof}[On the proof]
The uniqueness is clear, by density of the image of $\pi$ and using that $D$ is Hausdorff.

The existence is actually obtained in \cite[\S 4.1]{Cor} assuming $X$ complete, but the general case follows: let $Y$ be the completion of $D$, so that $Y^Q$ and $D$ are both $H$-invariant; the action on $Y^Q$ factors through a continuous action of $Q$ by the complete case. Then clearly $D^Q=Y^Q\cap D$, so the $Q$-action on $Y^Q$ restricts to a continuous action of $Q$ on $D^Q$.
\end{proof}

\begin{thm}[Theorem 4.7.4 in \cite{Cor}]\label{coexx}
Let $f:H\to Q$ be a continuous homomorphism with dense image between topological groups. Let $H$ act continuously by isometries on a complete CAT(0)
metric space $D$. Suppose that there exists a neighbourhood
$\Omega$ of $1$ in $Q$, such that, for some $w\in D$,
$f^{-1}(\Omega)w$ is bounded. Then $D^Q$ is nonempty.
\end{thm}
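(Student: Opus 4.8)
The plan is to exploit the circumcenter (Bruhat--Tits center) of bounded subsets of the complete CAT(0) space $D$. For each neighbourhood $V$ of $1$ in $Q$ with $V\subseteq\Omega$, the set $f^{-1}(V)w$ is contained in the bounded set $f^{-1}(\Omega)w$, hence its closure $S_V=\overline{f^{-1}(V)w}$ is bounded, and therefore admits a unique circumcenter $c_V$ (the center of its smallest enclosing ball) with circumradius $r_V$. I will produce the desired $Q$-point as the limit of the net $(c_V)$ as $V$ shrinks to $\{1\}$. The whole argument rests on one quantitative consequence of the CN inequality of Bruhat--Tits: if $S$ is a bounded set with circumcenter $c$ and circumradius $r$, and $p$ is any point with $\sup_{s\in S}d(p,s)\le\rho$, then taking $m$ the midpoint of $[c,p]$ and applying the CN inequality to each $s\in S$ yields $r^2\le\sup_s d(m,s)^2\le\tfrac12 r^2+\tfrac12\rho^2-\tfrac14 d(c,p)^2$, whence
\[
d(c,p)^2\le 2(\rho^2-r^2).
\]
This says that a point nearly realizing the circumradius is forced to lie near the circumcenter.

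First I would observe that $V\mapsto r_V$ is monotone: if $V'\subseteq V$ then $S_{V'}\subseteq S_V$, so $r_{V'}\le r_V$; thus the net $(r_V)$ decreases to some limit $r\ge 0$. Fixing a symmetric neighbourhood $V_0$ with $V_0^2\subseteq\Omega$ and working only with $V\subseteq V_0$, the displayed estimate applied to $S=S_V$ and $p=c_{V_0}$ (which lies within $r_{V_0}$ of the subset $S_V\subseteq S_{V_0}$) gives $d(c_{V_0},c_V)^2\le 2(r_{V_0}^2-r_V^2)\le 2(r_{V_0}^2-r^2)$. Since $r_{V_0}\to r$, this shows the net $(c_V)$ is Cauchy (directed by reverse inclusion), so by completeness of $D$ it converges to a point $x\in D$, with $d(x,c_V)\to 0$.

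It remains to check $x\in D^Q$, i.e.\ $\inf_{W}\sup_{g\in f^{-1}(W)}d(x,gx)=0$ over neighbourhoods $W$ of $1$ in $Q$. The key point is that isometries carry circumcenters to circumcenters and preserve circumradii: for $g\in H$ one has $gS_V=\overline{f^{-1}(f(g)V)w}$, so $gc_V=c(S_{f(g)V})$ and $r(S_{f(g)V})=r_V$. Take $V\subseteq V_0$ symmetric and $g\in f^{-1}(V)$; then $f(g)\in V$, so both $S_V$ and $S_{f(g)V}$ are contained in $S_{V^2}\subseteq S_{V_0^2}$. Applying the estimate twice with reference point $p=c_{V^2}$ gives $d(c_{V^2},c_V)^2\le 2(r_{V^2}^2-r^2)$ and $d(c_{V^2},gc_V)^2\le 2(r_{V^2}^2-r^2)$, hence $d(c_V,gc_V)\le 2\sqrt{2(r_{V^2}^2-r^2)}$, uniformly in $g\in f^{-1}(V)$. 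Combining with $d(gx,x)\le 2\,d(x,c_V)+d(c_V,gc_V)$ and letting $V$ shrink (so that $d(x,c_V)\to 0$ and $r_{V^2}\to r$) makes the right-hand side tend to $0$. Thus for every $\eps>0$ there is a neighbourhood $V$ with $\sup_{g\in f^{-1}(V)}d(x,gx)<\eps$, so $x\in D^Q$ and $D^Q\neq\emptyset$.

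I expect the main obstacle to be the bookkeeping in the last paragraph rather than any conceptual difficulty: one must arrange the neighbourhoods to be symmetric and nested with $V^2\subseteq\Omega$ so that all the sets $S_V$, $S_{f(g)V}$, $S_{V^2}$ remain bounded and comparable, and one must keep the quantitative CAT(0) estimate uniform over $g\in f^{-1}(V)$. The monotone-radius/Cauchy-net step is the technical heart, and the fact that $r$ need not vanish is precisely what the circumcenter estimate is designed to handle.
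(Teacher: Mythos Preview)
Your proof is correct. The paper does not give a self-contained argument for this statement: it simply cites \cite[Theorem~4.7.4]{Cor} and remarks that the local compactness hypothesis imposed there on $H$ and $Q$ plays no role, the proof in \cite{Cor} beginning with ``a sequence $(\Omega_n)$ of compact subsets of $\Omega$'' whose compactness is never used. Your circumcenter argument is exactly the standard Bruhat--Tits approach and is almost certainly what the cited proof does, with the cosmetic difference that you work with the full net of neighbourhoods $V$ of $1$ in $Q$ rather than a countable sequence $(\Omega_n)$; this is in fact the cleaner formulation, since it avoids any appeal to first-countability of $Q$ and makes the passage from ``locally compact'' to ``arbitrary topological group'' automatic rather than a separate observation. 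The one place where your write-up could be tightened is the Cauchy step: the sentence ``Since $r_{V_0}\to r$'' conflates the fixed $V_0$ with a variable one, but the intended argument (for $V'\subseteq V$ one has $d(c_V,c_{V'})^2\le 2(r_V^2-r^2)$, and $r_V\to r$) is clear and valid.
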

\begin{proof}[On the proof]
The statement \cite[Theorem 4.7.4]{Cor} assumes that $H,Q$ are locally compact, but it is not used in the proof (the first line of the proof considers a sequence $(\Omega_n)$ of compact subsets of $\Omega$ but $\Omega_n$ being compact is never used).
\end{proof}

\subsection{Kazhdan homomorphisms}\label{kh}

Let $\pi:H\to Q$ be a homomorphism between locally compact groups, with dense image. If $(u,\mathcal{H})$ is an orthogonal or unitary continuous representation of $H$, we denote by $u^Q$ the representation of $Q$ on $\mathcal{H}^Q$, which exists and is continuous by Proposition \ref{cext}.

\begin{defn}[{\cite[\S 4.2]{Cor}}]
We say that $\pi$ is a {\em Kazhdan homomorphism} if for every continuous orthogonal representation $u$ of $H$ such that $\mathbf{1}_H\prec u$ (i.e.\ $u$ almost has invariant vectors), we have $\mathbf{1}_Q\prec u^Q$.
\end{defn}

Kazhdan homomorphisms are called {\em resolutions} in \cite{Cor} but this choice of terminology is questionable. 

Note that the requirement $\mathbf{1}_Q\prec u^Q$ in particular implies $u^Q\neq 0$. Actually, if we modify the conclusion $\mathbf{1}_Q\prec u^Q$ into the weaker conclusion $u^Q\neq 0$, we obtain, at least for $\sigma$-compact groups, an equivalent definition, but the proof is not straightforward; see \cite[\S 2.3.8]{Cor05}. 

\begin{exe}\label{exkho}
The identity map of any locally compact group is always Kazhdan. The trivial homomorphism $H\to 1$ is Kazhdan if and only if $H$ has Kazhdan's Property T. More generally, if $N$ is a closed normal subgroup of $H$, then the quotient homomorphism $H\to H/N$ is Kazhdan if and only $(H,N)$ has relative Property T. Besides, if $H$ is Haagerup and $H\to Q$ is Kazhdan, then it is the quotient homomorphism by a compact subgroup.
\end{exe}

The following theorem, which generalized \cite[Theorem III.6.3]{Mar} and \cite{BL}, provides examples of Kazhdan homomorphisms beyond quotient homomorphisms.

\begin{thm}[{\cite[Theorem 4.3.1]{Cor}}]\label{mbl}
Let $G$ be a locally compact group, $N$ a normal subgroup such that $(G,N)$ has relative Property T. Let $H$ be a closed, finite covolume subgroup of $G$ whose projection in $Q=G/N$ is dense. Then the projection $H\to Q$ is a Kazhdan homomorphism.
\end{thm}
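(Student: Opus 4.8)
The plan is to combine induction of representations with relative Property~T, and then to identify the resulting datum with the $Q$-points of the original representation. So let $(u,\mathcal H)$ be a continuous orthogonal representation of $H$ with $\mathbf 1_H\prec u$; the goal is to produce $\mathbf 1_Q\prec u^Q$ (the representation $u^Q$ on $\mathcal H^Q$ being a genuine continuous $Q$-representation by Proposition \ref{cext}). First I would form the induced representation $v=\textnormal{Ind}_H^G u$ on the Hilbert space $\mathcal K$ of $L^2$-sections over $G/H$. Because $H$ has finite covolume, $G/H$ carries a finite $G$-invariant measure, so $L^2(G/H)=\textnormal{Ind}_H^G\mathbf 1_H$ contains the nonzero invariant constant function, whence $\mathbf 1_G\prec\textnormal{Ind}_H^G\mathbf 1_H$. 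Using continuity of induction with respect to weak containment together with $\mathbf 1_H\prec u$, I obtain $\mathbf 1_G\prec\textnormal{Ind}_H^G\mathbf 1_H\prec v$, so $v$ almost has invariant vectors for $G$.

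Next I would invoke relative Property~T of $(G,N)$. Writing $\mathcal K=\mathcal K^N\oplus(\mathcal K^N)^\perp$ with both summands $G$-invariant (as $N$ is normal), the complement $(\mathcal K^N)^\perp$ has no nonzero $N$-invariant vectors and therefore, by relative Property~T, does not almost have invariant $G$-vectors; since $v$ does, the almost invariant vectors must concentrate on $\mathcal K^N$, giving $\mathbf 1_G\prec v|_{\mathcal K^N}$. As $N$ acts trivially on $\mathcal K^N$, this representation factors through $Q=G/N$, and (lifting compact subsets of $Q$ to compact subsets of $G$) I obtain a $Q$-representation $\bar v$ on $\mathcal K^N$ with $\mathbf 1_Q\prec\bar v$.

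The decisive step is to identify $\bar v$ with $u^Q$. An element of $\mathcal K^N$ is a section $f\colon G\to\mathcal H$ that is left $N$-invariant and satisfies $f(gh)=u(h^{-1})f(g)$; since $\pi^{-1}(1)=H\cap N$, evaluating the equivariance on $H\cap N$ forces $f$ to take values in $\mathcal H^{H\cap N}$, and the left $N$-invariance lets $f$ descend to $\tilde f\colon Q\to\mathcal H^{H\cap N}$ with $\tilde f(q\,\pi(h))=u(h^{-1})\tilde f(q)$. The fiber norm $q\mapsto\|\tilde f(q)\|$ is invariant under the dense subgroup $\pi(H)$, hence constant almost everywhere by ergodicity of translation by a dense subgroup; so each such section has constant norm and, after passing to a continuous representative, is determined by its value $\xi=\tilde f(1)$ through $\tilde f(q)=u^Q(q^{-1})\xi$. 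The continuity of $\bar v$ forces $\xi$ to lie in $\mathcal H^Q$ (this is precisely the defining condition that $\pi(h)\mapsto u(h^{-1})\xi$ be $Q$-continuous at $1$), and conversely every $\xi\in\mathcal H^Q$ yields an admissible section. Thus $f\mapsto\tilde f(1)$ is a $Q$-equivariant isomorphism $\mathcal K^N\to\mathcal H^Q$, isometric up to the scalar factor $\sqrt{\textnormal{vol}(G/H)}$, intertwining $\bar v$ with $u^Q$. Combining with $\mathbf 1_Q\prec\bar v$ yields $\mathbf 1_Q\prec u^Q$, as required.

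I expect this last identification to be the main obstacle: it rests on the density of $\pi(H)$ in $Q$ (used both to force constant fiber norm and to upgrade $\pi(H)$-equivariance to genuine $Q$-continuity) and on the measure-theoretic point that ``evaluation at the base point'' is well defined only after selecting a continuous representative of each $N$-invariant section. Making the isomorphism $\mathcal K^N\cong\mathcal H^Q$ fully rigorous, rather than heuristic, is where the real work lies, whereas the induction step and the application of relative Property~T are standard.
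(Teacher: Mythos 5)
First, note that the paper does not prove this theorem: it is quoted from \cite[Theorem 4.3.1]{Cor}, so there is no in-text proof to compare against. Your architecture --- induce $u$ to $G$, use finite covolume to get $\mathbf{1}_G\prec\textnormal{Ind}_H^G u$, apply relative Property~T to concentrate the almost-invariant vectors on $\mathcal K^N$, and then identify the $Q$-representation on $\mathcal K^N$ with $u^Q$ on $\mathcal H^Q$ --- is the standard Margulis/Bekka--Louvet strategy and is essentially the route of the cited reference. The first two stages are fine as written (weak containment is preserved by induction, the splitting $\mathcal K=\mathcal K^N\oplus(\mathcal K^N)^\perp$ is $G$-invariant by normality of $N$, and a direct sum almost having invariant vectors forces one summand to).

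The gap is in the ``decisive step,'' and it is a genuine one: you construct (implicitly) the equivariant isometric embedding $\mathcal H^Q\hookrightarrow\mathcal K^N$, $\xi\mapsto\bigl(g\mapsto u^Q(\bar g)^{-1}\xi\bigr)$, but what the argument actually needs is \emph{surjectivity} of this map --- otherwise $\mathbf{1}_Q\prec\bar v$ says nothing about $u^Q$, since the almost-invariant vectors of $\bar v$ could a priori live in the orthogonal complement of the image. Your justification of surjectivity does not go through as stated: an element of $\mathcal K^N$ is an $L^2$-section defined only almost everywhere, $\pi(H)$ is typically a Haar-null subset of $Q$, so ``evaluating on $\pi(H)$'' and ``passing to a continuous representative'' beg the question; and the ergodicity argument only shows that the \emph{scalar} function $q\mapsto\|\tilde f(q)\|$ is a.e.\ constant, which does not by itself force $\tilde f$ to agree a.e.\ with a continuous equivariant section determined by one value. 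Closing this requires real work, e.g.\ applying the ergodicity of the diagonal $\pi(H)$-action to the full kernel $\langle\tilde f(q_1),\tilde f(q_2)\rangle$ to write it as $\psi(q_1q_2^{-1})$ a.e., invoking the regularization theorem for measurable positive-definite functions to replace $\psi$ by a continuous one, and reconstructing $\tilde f$ (and hence $\xi\in\mathcal H^Q$) from it --- or an equivalent direct-integral argument. As it stands, the proposal correctly locates the hard point but asserts rather than proves it, so the proof is incomplete precisely where the theorem's content lies.
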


Kazhdan homomorphisms are useful to transfer various rigidity properties of $Q$ back to $H$. For instance, $Q$ has Property $(\tau)$ if and only if $H$ has Property $(\tau)$, $Q$ is compactly generated if and only if $H$ is compactly generated, see \cite[Theorem 4.2.8, Proposition 4.5.2]{Cor} for this and more examples as well as Theorems \ref{fwres} and \ref{fmres} in this paper.

\begin{thm}[{\cite[Theorem 4.7.6]{Cor}}]\label{afr}
Let $\pi:H\to Q$ be a Kazhdan homomorphism with dense image between locally compact groups. Then for every continuous affine isometric action of $H$ on a Hilbert space $V$, we have $V^Q\neq\emptyset$. 
\end{thm}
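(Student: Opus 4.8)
The goal is Theorem \ref{afr}: a Kazhdan homomorphism $\pi:H\to Q$ with dense image forces $V^Q\neq\emptyset$ for every continuous affine isometric $H$-action on a Hilbert space $V$. The plan is to reduce the affine statement to the definitional linear statement about almost invariant vectors, by encoding the affine action as a linear orthogonal representation on a larger space and then applying Theorem \ref{coexx} to locate a $Q$-point.

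First I would recall the standard dictionary between affine isometric actions and orthogonal representations with a cocycle. Write the $H$-action as $g\cdot v = u(g)v + b(g)$, where $u$ is a continuous orthogonal representation of $H$ on $V$ and $b:H\to V$ is a continuous $1$-cocycle, i.e.\ $b(gh)=u(g)b(h)+b(g)$. The existence of a fixed point for the $Q$-action is equivalent to $b$ being a coboundary after passing to the relevant subspace. The key reformulation is that $V^Q$, the space of $Q$-points for the affine action, is nonempty precisely when one can find $w\in V$ whose $H$-orbit under the affine action is ``almost stationary'' along $\mathcal{V}_Q$; this is exactly the hypothesis structure of Theorem \ref{coexx}, whose conclusion delivers a nonempty $D^Q$ once some $f^{-1}(\Omega)w$ is bounded.

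The main step is to produce the boundedness hypothesis of Theorem \ref{coexx} out of the Kazhdan condition. Here I would argue by contradiction: suppose that for every $w\in V$ and every neighbourhood $\Omega$ of $1$ in $Q$, the set $\pi^{-1}(\Omega)w$ is unbounded. This unboundedness of the orbit maps should be converted into the existence of almost invariant vectors for the orthogonal representation $u$ (roughly, the normalized directions of the cocycle $b$ along a net in $\pi^{-1}(\Omega)$ concentrate on an almost invariant direction). Since $\pi$ is a Kazhdan homomorphism, $\mathbf{1}_H\prec u$ then yields $\mathbf{1}_Q\prec u^Q$, and in particular $u^Q\neq 0$: there are nonzero $Q$-fixed, or almost $Q$-fixed, vectors in $V^Q$. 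Combined with the way these vectors arise from the cocycle, this contradicts the assumed unboundedness, so some $\pi^{-1}(\Omega)w$ must be bounded after all. Applying Theorem \ref{coexx} with $D=V$ (which is complete and CAT(0)) then gives $V^Q\neq\emptyset$.

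The hard part will be the middle implication: cleanly extracting \emph{almost invariant vectors for $u$} from the failure of boundedness of affine orbits. This is the usual delicate point in relating reduced cohomology $\overline{H^1}$ to weak containment of the trivial representation, and one must be careful that the net of normalized cocycle directions genuinely converges weakly to an almost invariant vector rather than escaping to zero or to a non-invariant direction; controlling this uniformly over the filter $\mathcal{V}_Q$ is what makes the Kazhdan hypothesis (rather than mere Property FH) exactly the right input. I expect this is precisely why the proof is attributed to \cite[Theorem 4.7.6]{Cor} and reduces, through Theorem \ref{coexx}, to the already-established machinery of $Q$-points.
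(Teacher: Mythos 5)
First, note that the paper itself does not prove Theorem \ref{afr}: it is imported verbatim from \cite[Theorem 4.7.6]{Cor}, so there is no internal proof to compare against. Your overall frame --- establish the boundedness hypothesis of Theorem \ref{coexx} for $D=V$ and some $w$, then conclude $V^Q\neq\emptyset$ --- is indeed the architecture of the argument in \cite{Cor}. But the step you yourself flag as ``the hard part'' is not merely delicate; as stated it rests on a false implication. You propose to deduce, from the unboundedness of every $\pi^{-1}(\Omega)w$, that the \emph{linear part} $u$ satisfies $\mathbf{1}_H\prec u$, so that the Kazhdan hypothesis can then be applied to $u$. That deduction precedes any use of the Kazhdan hypothesis and is false in general: an affine isometric action can be unbounded while its linear part stays far from the trivial representation (the free group $F_2$ admits an unbounded $1$-cocycle over its regular representation, which does not weakly contain $\mathbf{1}_{F_2}$). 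Normalized cocycle directions $b(g)/\|b(g)\|$ along an unbounded net need not converge to anything almost invariant. Moreover, even granting $\mathbf{1}_Q\prec u^Q$, your closing sentence gives no mechanism by which $u^Q\neq 0$ would contradict the assumed unboundedness; a nonzero space of $Q$-vectors for the linear part is perfectly compatible with an unbounded cocycle.

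The actual mechanism in \cite[\S 4.7]{Cor} applies the Kazhdan hypothesis to a different family of representations, not to $u$. One sets $\psi(h)=\|h\cdot w-w\|^2$, a continuous conditionally negative definite function on $H$; by Schoenberg's theorem the functions $e^{-t\psi}$ ($t>0$) are positive definite and tend to $1$ uniformly on compact sets as $t\to 0$, so the direct sum of the associated GNS representations almost has invariant vectors. The Kazhdan property is applied to \emph{that} representation, and unwinding the conclusion $\mathbf{1}_Q\prec(\cdot)^Q$ yields that $\psi$ is bounded on $\pi^{-1}(\Omega)$ for some neighbourhood $\Omega$ of $1$ in $Q$; only at that point does Theorem \ref{coexx} enter. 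This Schoenberg detour (or an equivalent) is the missing idea; the contradiction scheme through almost invariant vectors of $u$ itself cannot be repaired.
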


This has the following consequence:

\begin{cor}\label{surjh1}
Let $\pi:H\to Q$ be a Kazhdan homomorphism with dense image between locally compact groups. Then for every continuous orthogonal representation of $H$, the restriction maps $H^1(Q,u^Q)\to H^1(H,u)$ and $\overline{H^1}(Q,u^Q)\to \overline{H^1}(H,u)$ are well-defined and surjective.
\end{cor}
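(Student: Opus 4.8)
The plan is to interpret $H^1$ and $\overline{H^1}$ through affine isometric actions, and to use Theorem \ref{afr} to produce, from an arbitrary $H$-cocycle, a $Q$-cocycle valued in $\mathcal H^Q$ whose restriction is cohomologous to it.

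First I would settle well-definedness. Given a continuous $1$-cocycle $c\colon Q\to\mathcal H^Q$ for $u^Q$, the composition $c\circ\pi\colon H\to\mathcal H^Q\subset\mathcal H$ is a continuous $1$-cocycle for $u$: since $u^Q(\pi(g))$ acts on $\mathcal H^Q$ as $u(g)$ (this is exactly the content of the factorization in Proposition \ref{cext}), the $u^Q$-cocycle identity for $c$ becomes the $u$-cocycle identity for $c\circ\pi$. A $u^Q$-coboundary $q\mapsto\xi-u^Q(q)\xi$ with $\xi\in\mathcal H^Q$ restricts to the $u$-coboundary $g\mapsto\xi-u(g)\xi$, so the induced map on $H^1$ is well-defined. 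Because $c\mapsto c\circ\pi$ is continuous for the topology of uniform convergence on compact sets, it sends $\overline{B^1}(Q,u^Q)$ into $\overline{B^1}(H,u)$, and hence also induces a map $\overline{H^1}(Q,u^Q)\to\overline{H^1}(H,u)$.

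For surjectivity, given a continuous $1$-cocycle $b\colon H\to\mathcal H$ for $u$, I would form the continuous affine isometric action $\alpha$ of $H$ on $V=\mathcal H$ defined by $\alpha(g)x=u(g)x+b(g)$. Theorem \ref{afr} supplies a $Q$-point $x_0\in V^Q$. The key structural observation is that $V^Q$ is the affine subspace $x_0+\mathcal H^Q$. Indeed, writing $\alpha(g)x-x=(u(g)-1)x+b(g)$, for $x,y\in V$ the vectors $\alpha(g)x-x$ and $\alpha(g)y-y$ differ by $(u(g)-1)(x-y)$; hence $x,y\in V^Q$ forces $x-y\in\mathcal H^Q$, while conversely $V^Q$ is stable under translation by $\mathcal H^Q$. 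By Proposition \ref{cext} the $H$-action on $V^Q$ factors through a continuous affine isometric $Q$-action $\alpha^Q$, whose linear part on the direction space $\mathcal H^Q$ coincides with $u^Q$ by density of $\pi(H)$ and continuity.

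I would then set $c(q)=\alpha^Q(q)x_0-x_0$, a continuous $u^Q$-cocycle on $Q$, and compute its restriction: $c(\pi(g))=\alpha(g)x_0-x_0=b(g)-(x_0-u(g)x_0)$, so that $c\circ\pi$ differs from $b$ by the $u$-coboundary $g\mapsto x_0-u(g)x_0$. Thus $[b]$ lies in the image of $H^1(Q,u^Q)\to H^1(H,u)$; and since this coboundary is a fortiori in the closure $\overline{B^1}(H,u)$, the same computation gives surjectivity of the reduced map. I expect the only real bookkeeping to be the identification of $V^Q$ with $x_0+\mathcal H^Q$ and the verification that $\alpha^Q$ has linear part exactly $u^Q$; once these are in hand the rest is formal. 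The nonemptiness of $V^Q$, which is where the Kazhdan hypothesis is genuinely used, is already provided by Theorem \ref{afr}.
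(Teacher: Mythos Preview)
Your proof is correct and follows essentially the same approach as the paper: both use Theorem \ref{afr} to find a $Q$-point $x_0$ for the affine action attached to $b$, then translate so that the resulting cohomologous cocycle factors through $Q$ with values in $\mathcal H^Q$. You simply spell out in full the identifications (notably $V^Q=x_0+\mathcal H^Q$ and the linear part of $\alpha^Q$ being $u^Q$) that the paper compresses into a single sentence.
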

\begin{proof}
That these maps are well-defined is straightforward.

Indeed, given $b\in Z^1(H,u)$, then some cohomologous cocycle $b'$ has 0 as a $Q$-point for the affine action defined by $b'$, which means that $b'=c\circ\pi$ for some $c\in Z^1(Q,u^Q)$. The statement in reduced cohomology follows.
\end{proof}

\subsection{Some maps}\label{soma}

If $Z,Y$ are sets and $\eta:Z\to Y$ is a surjective map with finite fibers, we denote by $\ell^p_{[Y]}(Z)$ the set of functions in $\ell^p(Z)$ that are constant on each fiber of $\eta$ (rigorously speaking, we should rather write $\ell^p_\eta(Z)$). Writing $Z_y=\eta^{-1}(\{y\})$, consider the maps
\begin{eqnarray*}
\phi,\phi':\quad\ell^p_{[Y]}(Z) & \to & \ell^p(Y)\\
f=\sum_{y\in Y}\lambda_y\mathbf{1}_{Z_y} & \mapsto & \phi(f)=\sum_{y\in Y}\lambda_y\delta_{y} \\
 & \mapsto & \phi'(f)=\sum_{y\in Y}\#(Z_y)^{1/p}\lambda_y\delta_{y}.
\end{eqnarray*}

The following lemma is immediate.
\begin{lem}\label{linid2}
The maps $\phi$ and $\phi'$ are injective 1-Lipschitz linear maps, with dense image.
The mapping $\phi'$ is in addition a bijective isometry. The mapping $\phi$ is surjective (and then a linear isomorphism) if and only if the fibers of $\eta$ have bounded cardinality. If $Z,Y$ are $H$-sets so that $\eta$ is $H$-equivariant, then $\phi$ and $\phi'$ are $H$-equivariant.\qed
\end{lem}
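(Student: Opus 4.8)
The plan is to reduce everything to one norm computation and read off the rest. Writing $f=\sum_{y\in Y}\lambda_y\mathbf{1}_{Z_y}\in\ell^p_{[Y]}(Z)$, I would first record the identity
\[\|f\|_p^p=\sum_{z\in Z}|f(z)|^p=\sum_{y\in Y}\#(Z_y)\,|\lambda_y|^p,\]
which holds because $f$ takes the constant value $\lambda_y$ on the fiber $Z_y$. Comparing this with $\|\phi(f)\|_p^p=\sum_y|\lambda_y|^p$ and $\|\phi'(f)\|_p^p=\sum_y\#(Z_y)\,|\lambda_y|^p$ settles all the normed statements at once: since $\eta$ is surjective, every fiber is nonempty, so $\#(Z_y)\ge 1$, whence $\|\phi(f)\|_p\le\|f\|_p$ (so $\phi$ is $1$-Lipschitz) while $\|\phi'(f)\|_p=\|f\|_p$ exactly (so $\phi'$ is isometric, in particular $1$-Lipschitz). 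Linearity of both maps is immediate from the linear dependence of $(\lambda_y)_y$ on $f$, and injectivity follows because $\phi(f)=0$ or $\phi'(f)=0$ forces every $\lambda_y=0$, hence $f=0$.

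Next I would treat the images. The image of either map contains every finitely supported function on $Y$: given $g=\sum_{y\in F}c_y\delta_y$ with $F$ finite, the function $\sum_{y\in F}c_y\mathbf{1}_{Z_y}$ (resp.\ $\sum_{y\in F}\#(Z_y)^{-1/p}c_y\mathbf{1}_{Z_y}$) lies in $\ell^p_{[Y]}(Z)$ and maps to $g$ under $\phi$ (resp.\ $\phi'$). Since finitely supported functions are dense in $\ell^p(Y)$ for $p<\infty$, both images are dense. For $\phi'$ I would upgrade density to surjectivity by an explicit preimage: for any $g=\sum_yc_y\delta_y\in\ell^p(Y)$, setting $\lambda_y=\#(Z_y)^{-1/p}c_y$ yields $f=\sum_y\lambda_y\mathbf{1}_{Z_y}$ with $\|f\|_p^p=\sum_y|c_y|^p<\infty$, so $f\in\ell^p_{[Y]}(Z)$ and $\phi'(f)=g$; thus $\phi'$ is a bijective isometry.

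The only genuinely non-formal point, and the step I expect to be the main (if minor) obstacle, is the surjectivity criterion for $\phi$. If the fibers have cardinality bounded by some $C$, the identity above gives $\|f\|_p^p\le C\sum_y|\lambda_y|^p$, so any prescribed $(\lambda_y)\in\ell^p(Y)$ determines an $f\in\ell^p_{[Y]}(Z)$ with $\phi(f)=\sum_y\lambda_y\delta_y$; hence $\phi$ is onto. As a bounded linear bijection between Banach spaces it is then a linear isomorphism by the open mapping theorem, using that $\ell^p_{[Y]}(Z)$ is closed in $\ell^p(Z)$ (an $\ell^p$-limit converges pointwise and so remains constant on fibers). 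For the converse I would argue contrapositively: assuming the fibers are unbounded, pick distinct $y_1,y_2,\dots$ with $\#(Z_{y_n})\ge 2^n$ and set $g=\sum_n\#(Z_{y_n})^{-1/p}\delta_{y_n}$. Then $\|g\|_p^p=\sum_n\#(Z_{y_n})^{-1}\le\sum_n2^{-n}<\infty$, so $g\in\ell^p(Y)$, yet any preimage would need $\lambda_{y_n}=\#(Z_{y_n})^{-1/p}$ and hence infinite norm $\sum_n\#(Z_{y_n})\cdot\#(Z_{y_n})^{-1}=\infty$; so $g$ is not in the image and $\phi$ fails to be surjective. Finally, for equivariance, when $\eta$ is $H$-equivariant the fiber over $hy$ is $hZ_y$, so $h$ permutes the fibers while preserving their cardinalities; feeding this into the decomposition of $f$ and changing the summation index by one line shows that both $\phi$ and $\phi'$ intertwine the $H$-actions.
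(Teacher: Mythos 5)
Your proof is correct and complete; the paper itself declares Lemma \ref{linid2} ``immediate'' and gives no proof, and your norm identity $\|f\|_p^p=\sum_y\#(Z_y)|\lambda_y|^p$ together with the bounded/unbounded fiber dichotomy is exactly the verification the author leaves to the reader. No gaps.
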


The map $\phi$ seems worse than the map $\phi'$ from the point of view of Lemma \ref{linid2}; however when passing to an affine setting, only the map $\phi$ is workable with. 

Define, for any subset $N\subset Z$, $\ell^p_{[Y],N}(Z)$ as the set of elements in $\ell^p_N(Z)$ (as defined in \S\ref{ala}) that are constant on fibers of $\eta$. In other words, $\ell^p_{[Y],N}(Z)$ is the set of those $f\in\ell^p_{[Y]}(Z)$ such that $f-\mathbf{1}_{N}\in\ell^p(Z)$.

\begin{lem}\label{afid2}
The subset $\ell^p_{Y,N}(Z)$ is nonempty if and only if $N$ coincides up to a finite set to a union of fibers. If so, if $N'$ is the image of $N$ in $Y$, there is an injective affine 1-Lipschitz map, with dense image
\begin{eqnarray*}
\varpi:\quad\ell^p_{[Y],M}(Z) & \to & \ell^p_{M'}(Y)\\
\sum_{y\in Y}\mu_y\mathbf{1}_{Z_y} & \mapsto & \sum_{y\in Y}\mu_y\delta_{y}.
\end{eqnarray*}
The linear part $\ell^p_{[Y]}(Z)\to\ell^p(Y)$ of $\varpi$ is the function $\phi$ of Lemma \ref{linid2}. If $Z,Y$ are $H$-sets so that $\eta$ is $H$-equivariant, then $\varpi$ is $H$-equivariant.\qed
\end{lem}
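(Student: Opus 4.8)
The plan is to separate the nonemptiness criterion from the properties of $\varpi$, and to reduce the latter entirely to the already-established behaviour of the linear map $\phi$ from Lemma \ref{linid2} by choosing a convenient base point.

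For the nonemptiness criterion I would expand the defining $\ell^p$-condition fiberwise. For $f=\sum_y\mu_y\mathbf 1_{Z_y}$ constant on fibers, the requirement $f-\mathbf 1_M\in\ell^p(Z)$ reads $\sum_y\big(\#(Z_y\cap M)\,|\mu_y-1|^p+\#(Z_y\setminus M)\,|\mu_y|^p\big)<\infty$, since on $Z_y$ the function $f-\mathbf 1_M$ takes the value $\mu_y-1$ on $Z_y\cap M$ and $\mu_y$ on $Z_y\setminus M$. The key observation is that whenever a fiber $Z_y$ meets both $M$ and its complement, both cardinalities are $\ge 1$, so the $y$-term is at least $\min_{t\in\mathbf R}(|t-1|^p+|t|^p)=2^{1-p}>0$. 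Convergence of a sum with infinitely many terms bounded below by this positive constant is impossible, so at most finitely many fibers are split; that is, all but finitely many fibers lie entirely inside or entirely outside $M$, which says precisely that $M$ agrees up to a finite set with the union $\bigcup_{y\in Y_0}Z_y$ of the fibers it swallows. Conversely, if $M$ is commensurate to such a union, then $f_0=\sum_{y\in Y_0}\mathbf 1_{Z_y}$ is constant on fibers with $f_0-\mathbf 1_M$ finitely supported, exhibiting a point of $\ell^p_{[Y],M}(Z)$.

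For the map itself I would use $f_0$ as an origin. Any $f\in\ell^p_{[Y],M}(Z)$ satisfies $f-f_0\in\ell^p_{[Y]}(Z)$, being constant on fibers and lying in $\ell^p(Z)$, so $\varpi(f)=\varpi(f_0)+\phi(f-f_0)$ with $\varpi(f_0)=\mathbf 1_{Y_0}$. A short bookkeeping shows $Y_0\tu M'$ is finite (each $y\in M'\setminus Y_0$, resp.\ $y\in Y_0\setminus M'$, contributes, via the finite fibers, distinct elements of the finite set $M\tu\bigcup_{y\in Y_0}Z_y$), so $\mathbf 1_{Y_0}-\mathbf 1_{M'}\in\ell^p(Y)$. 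This simultaneously shows $\varpi$ lands in $\ell^p_{M'}(Y)$, is affine with linear part $\phi$, and inherits from $\phi$ injectivity, the $1$-Lipschitz bound, and dense image (the affine image is $\mathbf 1_{Y_0}+\phi(\ell^p_{[Y]}(Z))$, dense in $\mathbf 1_{Y_0}+\ell^p(Y)=\ell^p_{M'}(Y)$). Equivariance is immediate, since $g$ carries $Z_y$ to $Z_{gy}$ and $\delta_y$ to $\delta_{gy}$, so the fiberwise formula for $\varpi$ commutes with the $H$-actions.

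The bulk of the argument is formal transfer from Lemma \ref{linid2}; the one genuinely substantive point, and thus the main obstacle, is the lower bound $2^{1-p}$ on split fibers, which is exactly what forces only finitely many mixed fibers and hence yields the dichotomy underlying the nonemptiness criterion.
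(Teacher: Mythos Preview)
Your argument is correct. Both you and the paper reduce the affine properties of $\varpi$ to the linear Lemma~\ref{linid2} by choosing a base point, and both equivariance arguments are immediate; on that part the approaches coincide.

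The only genuine difference is in the nonemptiness criterion. You compute fiberwise and use the quantitative bound $\min_t(|t-1|^p+|t|^p)=2^{1-p}$ to force only finitely many split fibers. The paper instead takes any $f\in\ell^p_{[Y],N}(Z)$ and sets $N'=\{f>1/2\}$: this is automatically a union of fibers (since $f$ is fiber-constant), and $N'\bigtriangleup N\subset\{|f-\mathbf 1_N|\ge 1/2\}$ is finite because $f-\mathbf 1_N\in\ell^p$. The threshold trick is shorter and avoids the optimization, but your explicit bound gives the same conclusion and makes the obstruction quantitatively visible. Either route is fine.
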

\begin{proof}
Suppose that $f\in \ell^p_{[Y],N}(Z)$. Define $N'=\{f>1/2\}$. Then $N'$ is a union of fibers and is commensurate to $N$. This proves the first statement. The remainder (except equivariance) follows from the linear counterpart in Lemma \ref{linid2}, and the equivariance statement is immediate.
\end{proof}

\section{$Q$-points on some spaces associated to an $H$-set}

\subsection{Compatible and incompatible points}\label{sett}

Throughout this section, we fix a group (with no topology) $H$ and a homomorphism with dense image $\pi:H\to Q$, where $Q$ is a Hausdorff topological group. We also let $X$ be an $H$-set. 

We freely use the terminology pertaining to $Q$-points introduced in \S\ref{uq}. We compute the $Q$-points successively in various spaces on which $H$ acts isometrically.

\begin{defn}\label{qcom}
If $F$ is a finite subset of $X$, we say that $F$ is {\em $Q$-compatible} if there exists $V\in\mathcal{V}_Q$ such that $gF=F$ for all $g\in V$.

We define the {\em $Q$-compatible part} of $X$ as the union $X^{\langle Q\rangle}$ of $Q$-compatible finite subsets of $X$, and the {\em $Q$-incompatible part} of $X$ as its complement $X^{\rangle Q\langle}=X\smallsetminus X^{\langle Q\rangle}$.
We say that $X$ is $Q$-compatible (resp.\ $Q$-incompatible) if $X=X^{\langle Q\rangle}$ (resp.\ $X=X^{\rangle Q\langle}$).

We say that two elements in $X^{\langle Q\rangle}$ are $Q$-equivalent if they belong to the same $Q$-compatible finite subsets and define the {\em $Q$-reduction} $X^{[Q]}$ as the quotient of $X^{\langle Q\rangle}$ by this equivalence relation, and denote by $\mathcal{P}_Q$ the corresponding partition of $X^{\langle Q\rangle}$.
\end{defn}

Note that the $Q$-equivalence classes, or equivalently the fibers of the canonical projection $X^{\langle Q\rangle}\to X^{[Q]}$, are the minimal $Q$-compatible finite subsets of $X$.

\subsection{$Q$-points in $\mathcal{F}(X)$}

Again, $\pi:H\to Q$ and the $H$-set $X$ are given as in \S\ref{sett}.

Let $\mathcal{F}(X)$ be the set of finite subsets of $X$, with the discrete distance $d(F_1,F_2)=\#(F_1\tu F_2)$; it is a a Boolean algebra for the usual operation. In particular, the $H$-action given by left multiplication on $\mathcal{F}(X)$ is by isometric group automorphisms. Denote by $\rho$ the projection $X^{\langle Q\rangle}\to X^{[Q]}$.

\begin{prop}\label{linearxq}
We have $\mathcal{F}(X)^{Q}=\rho^*\mathcal{F}(X^{[Q]})$, in other words, the set of $Q$-points $\mathcal{F}(X)^{Q}$ consists of the $\rho^{-1}(F)$ for $F$ finite subset of $X^{[Q]}$.
In particular, $\rho$ induces an isomorphism of Boolean algebras \[\mathcal{F}(X)^{Q}\simeq  \mathcal{F}(X^{[Q]}).\] 

Moreover, $X^{\langle Q\rangle}$ is $H$-invariant, the $H$-action factors to a continuous action of $Q$ on $X^{[Q]}$ and the above isomorphism is $Q$-equivariant.
\end{prop}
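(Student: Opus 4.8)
The plan is to prove the equality $\mathcal{F}(X)^Q = \rho^*\mathcal{F}(X^{[Q]})$ by a double inclusion, using the explicit description of $Q$-points in the discrete case from Remark \ref{dqdis}, and then to deduce the structural corollaries (Boolean algebra isomorphism, $Q$-equivariance) formally. Recall that since $\mathcal{F}(X)$ is discrete, Remark \ref{dqdis} tells us that $F \in \mathcal{F}(X)^Q$ if and only if there is some $V \in \mathcal{V}_Q$ with $V \subset H_F$, i.e.\ $gF = F$ for all $g \in V$. This is literally the definition of $F$ being a $Q$-compatible finite subset (Definition \ref{qcom}), so the first observation is that $\mathcal{F}(X)^Q$ is \emph{exactly} the set of $Q$-compatible finite subsets of $X$. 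The entire task then reduces to showing that a finite subset $F \subset X$ is $Q$-compatible if and only if it is of the form $\rho^{-1}(S)$ for some finite $S \subset X^{[Q]}$.

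For the forward direction, suppose $F$ is $Q$-compatible. Every element of $F$ lies in a $Q$-compatible finite set (namely $F$), so $F \subset X^{\langle Q\rangle}$. I would then argue that $F$ is a union of $Q$-equivalence classes: if $x \in F$ and $x'$ is $Q$-equivalent to $x$, then by definition $x'$ lies in every $Q$-compatible finite set containing $x$, in particular $x' \in F$. Hence $F$ is saturated for the partition $\mathcal{P}_Q$, so $F = \rho^{-1}(\rho(F))$ with $\rho(F)$ finite. Conversely, given a finite $S \subset X^{[Q]}$, I must check $\rho^{-1}(S)$ is a finite $Q$-compatible set. Finiteness follows because the fibers of $\rho$ are finite (they are minimal $Q$-compatible finite subsets, as noted after Definition \ref{qcom}) and $S$ is finite. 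For $Q$-compatibility, I take the finitely many fibers $C_1,\dots,C_k$ comprising $\rho^{-1}(S)$; each $C_i$ is $Q$-compatible, so there is $V_i \in \mathcal{V}_Q$ fixing $C_i$ setwise. Since $\mathcal{V}_Q$ is a filter (being the preimage-of-neighborhoods filter), the finite intersection $V = \bigcap_i V_i$ still lies in $\mathcal{V}_Q$ and fixes each $C_i$, hence fixes their union $\rho^{-1}(S)$. This establishes both inclusions and therefore the set-theoretic equality.

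The map $\rho$ visibly intertwines union, intersection, and complement (within the ambient finite Boolean algebras), so once the underlying sets are identified, $\rho$ induces an isomorphism of Boolean algebras $\mathcal{F}(X)^Q \simeq \mathcal{F}(X^{[Q]})$; I would phrase this by noting $\rho^*$ is an injective Boolean homomorphism whose image is exactly $\mathcal{F}(X)^Q$. For the final paragraph's assertions, $H$-invariance of $X^{\langle Q\rangle}$ comes from $H$-invariance of the class of $Q$-compatible sets (since $\mathcal{V}_Q$ is stable under the conjugation-type manipulations, one checks $g F$ is $Q$-compatible when $F$ is, using that $\pi$ has dense image and $\mathcal{V}_Q$ is defined via $\pi^{-1}$ of neighborhoods). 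The existence and continuity of the factored $Q$-action on $X^{[Q]}$, together with $Q$-equivariance of the isomorphism, then follow directly from Proposition \ref{cext} applied to the isometric $H$-action on the discrete space $\mathcal{F}(X)$: the $Q$-action on $\mathcal{F}(X)^Q$ restricts to the sub-Boolean-algebra and transports through $\rho^*$.

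The main obstacle I anticipate is not any single hard estimate but rather the bookkeeping around the filter $\mathcal{V}_Q$: one must be careful that $\mathcal{V}_Q$ is genuinely closed under finite intersections (which it is, since neighborhoods of $1$ in $Q$ are, and preimage and the superset-closure preserve this) in order to handle the \emph{finite} unions of fibers, and one must verify $H$-stability of $Q$-compatibility cleanly, checking that if $gF = F$ for $g \in V$ then $h g h^{-1}(hF) = hF$ and that $hVh^{-1}$ or an appropriate member of $\mathcal{V}_Q$ still works. These are the points where the argument could silently break if $\mathcal{V}_Q$ were merely a neighborhood basis rather than a filter, so I would make the filter property explicit before invoking it.
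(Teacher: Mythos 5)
Your proof is correct and follows essentially the same route as the paper: identify $\mathcal{F}(X)^Q$ with the $Q$-compatible finite subsets via Remark \ref{dqdis}, check these are exactly the saturated finite sets using the filter property of $\mathcal{V}_Q$, and invoke Proposition \ref{cext} for the continuous $Q$-action (you merely spell out steps the paper declares ``immediate''). The only cosmetic divergence is that the paper applies Proposition \ref{cext} directly to $X^{[Q]}$ with the trivial metric, observing $(X^{[Q]})^Q=X^{[Q]}$, whereas you apply it to $\mathcal{F}(X)$ and transport the action through $\rho^*$; both work.
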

\begin{proof}
Having Remark \ref{dqdis} in mind, it is immediate that $\mathcal{F}(X)^{Q}$ consists of the $Q$-compatible finite subsets of $X$ as defined in Definition \ref{qcom}. Therefore the first statement immediately follows from the definitions. It is also straightforward that $X^{\langle Q\rangle}$ is $H$-invariant, and the $H$-action factors to a continuous action of $H$ on $X^{[Q]}$ and the isomorphism $\rho^*$ is $H$-equivariant.

Observe that in view of Remark \ref{dqdis}, we have $(X^{[Q]})^Q=X^{[Q]}$, where $X^{[Q]}$ is endowed with the trivial distance (any two distinct elements are at distance 1). By Proposition \ref{cext}, the action of $H$ on $X^{[Q]}$ factors through a continuous action of $Q$. Finally, the identification is $H$-equivariant, hence $Q$-equivariant by density.
\end{proof}

\begin{rem}
It follows from Proposition \ref{linearxq} that if $X$ is $H$-transitive, then either $X$ is $Q$-compatible and $\rho$ is a projection $X\to X^{[Q]}$ with fibers of constant finite cardinal, or $X$ is $Q$-incompatible and $X^{[Q]}=\emptyset$.
\end{rem}

\begin{prop}\label{xqf}
We have 
\[X^{\langle Q\rangle}=\{x\in X:\;\exists V\in\mathcal{V}_Q\;\textnormal{such that}\; V\!x\textnormal{ is finite}\};\]
moreover for every $x\in X^{\langle Q\rangle}$ there exists $V_0\in\mathcal{V}_Q$ such that for any $V\in\mathcal{V}_Q$ contained in $V_0$, the subset $V\!x$ is equal to the fiber $\rho^{-1}(\{\rho(x)\})$.
\end{prop}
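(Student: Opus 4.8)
The plan is to prove both assertions at once by analysing how the finite sets $\pi^{-1}(W)x$ behave as the neighbourhood $W$ of $1$ in $Q$ shrinks. Write $Y$ for the right-hand side of the first displayed equality. The inclusion $X^{\langle Q\rangle}\subseteq Y$ is immediate: if $x$ lies in a $Q$-compatible finite set $F$, with $gF=F$ for all $g\in V$ and some $V\in\mathcal{V}_Q$, then $Vx\subseteq F$ is finite, so $x\in Y$.

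For the reverse inclusion, fix $x\in Y$, so that $\pi^{-1}(W_0)x$ is finite for some neighbourhood $W_0$ of $1$ (recall any $V\in\mathcal{V}_Q$ contains some $\pi^{-1}(W_0)$, and a smaller set only makes $Vx$ smaller). The key observation is that the family of finite sets $\pi^{-1}(W)x$, indexed by neighbourhoods $W\subseteq W_0$ of $1$, is nonincreasing as $W$ shrinks and is directed downwards (using $W\cap W'$); since all these sets are finite subsets of $\pi^{-1}(W_0)x$, the family stabilises. Thus there is a neighbourhood $W_*$ of $1$ such that $F:=\pi^{-1}(W_*)x=\pi^{-1}(W)x$ for every neighbourhood $W\subseteq W_*$ of $1$; equivalently $F=\bigcap_W\pi^{-1}(W)x$.

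Next I would show that $F$ is $Q$-compatible. Choosing a symmetric neighbourhood $W_{**}$ of $1$ with $W_{**}W_{**}\subseteq W_*$ and using the identity $g\pi^{-1}(W_{**})=\pi^{-1}(\pi(g)W_{**})$, one gets, for $\pi(g)\in W_{**}$, that $\pi(g)W_{**}\subseteq W_*$ and hence $gF=g\pi^{-1}(W_{**})x=\pi^{-1}(\pi(g)W_{**})x\subseteq\pi^{-1}(W_*)x=F$; applying this to $g^{-1}$ (whose image is again in $W_{**}$) upgrades the inclusion to $gF=F$. So $F$ is $Q$-compatible with witness $\pi^{-1}(W_{**})\in\mathcal{V}_Q$, and since $x\in F$ this already gives $x\in X^{\langle Q\rangle}$, proving the first assertion.

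To pin down the fibre and obtain the second assertion, I would check that $F=\rho^{-1}(\{\rho(x)\})$, i.e.\ that $F$ is the minimal $Q$-compatible finite set containing $x$. One inclusion holds because $\rho^{-1}(\{\rho(x)\})$ is that minimal set while $F$ is a $Q$-compatible finite set containing $x$. For the other, each $y\in F$ can, by the stabilisation above, be written as $y=hx$ with $\pi(h)$ in an arbitrarily small neighbourhood of $1$; since $\pi(h^{-1})$ is then also small, $y$ and $x$ lie in exactly the same $Q$-compatible finite subsets, hence are $Q$-equivalent, so $y\in\rho^{-1}(\{\rho(x)\})$. Finally, taking $V_0=\pi^{-1}(W_*)$, any $V\in\mathcal{V}_Q$ with $V\subseteq V_0$ satisfies $\pi^{-1}(W_1)\subseteq V\subseteq\pi^{-1}(W_*)$ for some neighbourhood $W_1$ of $1$; replacing $W_1$ by $W_1\cap W_*$ gives $F=\pi^{-1}(W_1\cap W_*)x\subseteq Vx\subseteq\pi^{-1}(W_*)x=F$, so $Vx=F=\rho^{-1}(\{\rho(x)\})$, as required. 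I expect the only real subtlety to be this stabilisation step together with the verification that the stabilised germ $F$ is genuinely invariant under a whole neighbourhood: the manipulation $g\pi^{-1}(W_{**})=\pi^{-1}(\pi(g)W_{**})$ and the choice of $W_{**}$ with $W_{**}W_{**}\subseteq W_*$ are what make the non-group set $\pi^{-1}(W_*)$ behave well, and everything else is bookkeeping with neighbourhoods.
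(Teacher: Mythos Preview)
Your proof is correct and follows essentially the same approach as the paper: a stabilisation/minimality argument to produce the set $F=V_0x$, followed by a use of the group structure (choosing $W_{**}$ with $W_{**}W_{**}\subseteq W_*$, which plays the role of the paper's $V_1V_2\subseteq V_0$) to show $F$ is $Q$-compatible. The paper dispatches the identification of $F$ with the fibre in a single clause (``also proving the additional statement''), whereas you spell it out via $Q$-equivalence; both are fine, and indeed your argument that $V_0x\subseteq Wx$ for every $W\in\mathcal{V}_Q$ is exactly what underlies the paper's implicit minimality claim.
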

\begin{proof}
The inclusion $\subset$ is trivial. Conversely, suppose that $x$ belongs to the right-hand set. Take $V_0$ such that $V_0x$ has minimal cardinality among those $Vx$ with $V\in\mathcal{V}_Q$. Then by minimality, for every $V\in\mathcal{V}_Q$ and $V\subset V_0$ we have $Vx=V_0x$. In particular, for every $W\in\mathcal{V}_Q$, we have $V_0\supset V\cap W\in\mathcal{V}_Q$ and it follows that $V_0x\subset Wx$.

Since $Q$ is a topological group, there exist $V_1,V_2\in\mathcal{V}_Q$ such that $V_1V_2\subset V_0$ and $V_1=V_1^{-1}$. For every $g\in V_1$, we have 
$$gV_0x=gV_2x\in V_1V_2x\subset V_0x;$$
since this also holds for $g^{-1}$, we deduce that $gV_0x=V_0x$. Thus $V_0x\in\mathcal{F}(X)^{Q}$, so that $x\in X^{\langle Q\rangle}$, also proving the additional statement.
\end{proof}

\subsection{$Q$-points in $\mathcal{F}_M(X)$}\label{qfm} We continue with the notation of \S\ref{sett}.

Now let $M$ be a subset of $X$, commensurated by the $H$-action and whose stabilizer is open. Let $\mathcal{F}_M(X)$ be the set of subsets of $X$ commensurate to $M$, with the discrete distance $d(N_1,N_2)=\#(N_1\tu N_2)$. It can be viewed as an affine space over the field on 2 elements $\mathbf{F}_2$, whose linear part is $\mathcal{F}(X)$. It will be sometimes convenient to write the addition in $\mathcal{F}(X)$, the symmetric difference $\tu$, with the sign~$+$.

Define $M^{[Q]}=\rho(M\cap X^{\langle Q\rangle})$. We still denote by $\rho^*$ the inverse image map of $\rho$, from the power set of $X^{[Q]}$ to that of $X^{\langle Q\rangle}$.

\begin{lem}\label{decfm}
If there is an $H$-invariant partition $X=X_1\sqcup X_2$, then, denoting $M_i=M\cap X_i$, 
\[\mathcal{F}_M(X)^{Q}=\mathcal{F}_{M_1}(X_1)^{Q}+\mathcal{F}_{M_2}(X_2)^{Q}.\]
In particular, we have $\mathcal{F}_M(X)^{Q}\neq\emptyset$ if and only if $\mathcal{F}_{M_1}(X_1)^{Q}$ and $\mathcal{F}_{M_2}(X_2)^{Q}$ are both nonempty.
\end{lem}
\begin{proof}
The inclusion $\supset$ is trivial, and conversely $\subset$ follows by decomposing $N$ as $(N\cap X_1)+(N\cap X_2)$. 
\end{proof}

\begin{prop}\label{nodep}
If $N_1,N_2\in \mathcal{F}_M(X)^{Q}$, then $N_1\cap X^{\rangle Q\langle}=N_2\cap X^{\rangle Q\langle}$. In particular, if $\mathcal{F}_M(X)^{Q}\neq\emptyset$, the subset $M^{\rangle Q\langle}$ defined to be equal to $N\cap X^{\rangle Q\langle}$ does not depend on the choice of $N\in\mathcal{F}_M(X)^{Q}$. 

Moreover, $M^{\rangle Q\langle}$ is $H$-invariant, $M^{[Q]}$ is commensurated by the $Q$-action on $X^{[Q]}$ and has an open stabilizer.
\end{prop}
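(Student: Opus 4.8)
The plan is to reduce everything to the discrete description of $Q$-points in Remark \ref{dqdis}: since $\mathcal{F}_M(X)$ carries a discrete $H$-invariant distance, an element $N$ lies in $\mathcal{F}_M(X)^{Q}$ exactly when some $V\in\mathcal{V}_Q$ fixes $N$ setwise, i.e.\ $gN=N$ for all $g\in V$. For part~1, I would take $N_1,N_2\in\mathcal{F}_M(X)^{Q}$ fixed respectively by $V_1,V_2\in\mathcal{V}_Q$ and set $V=V_1\cap V_2$, which again lies in $\mathcal{V}_Q$ since $\mathcal{V}_Q$ is a filter. The key observation is that $N_1\tu N_2$ is finite (both $N_i$ being commensurate to $M$) and is fixed by $V$, because $g(N_1\tu N_2)=gN_1\tu gN_2$; by Definition \ref{qcom} it is thus a $Q$-compatible finite set, so $N_1\tu N_2\subseteq X^{\langle Q\rangle}$, i.e.\ $(N_1\tu N_2)\cap X^{\rangle Q\langle}=\emptyset$. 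Since $(N_1\tu N_2)\cap X^{\rangle Q\langle}=(N_1\cap X^{\rangle Q\langle})\tu(N_2\cap X^{\rangle Q\langle})$, this yields $N_1\cap X^{\rangle Q\langle}=N_2\cap X^{\rangle Q\langle}$, proving part~1 and making $M^{\rangle Q\langle}$ well defined.

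For $H$-invariance I would use that $\mathcal{F}_M(X)^{Q}$ is $H$-invariant (Lemma \ref{wcloq}) and that $X^{\rangle Q\langle}$ is $H$-invariant, being the complement of the $H$-invariant set $X^{\langle Q\rangle}$ (Proposition \ref{linearxq}). Then for $h\in H$ and $N\in\mathcal{F}_M(X)^{Q}$ one has $hN\in\mathcal{F}_M(X)^{Q}$, and $hM^{\rangle Q\langle}=h(N\cap X^{\rangle Q\langle})=hN\cap X^{\rangle Q\langle}$, which equals $M^{\rangle Q\langle}$ by the independence just established.

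For the last assertion, note that the standing hypothesis that the stabilizer $H_M$ of $M$ is open, i.e.\ $H_M\in\mathcal{V}_Q$, says precisely that $M\in\mathcal{F}_M(X)^{Q}$; so I may take $N=M$ and set $M_c=M\cap X^{\langle Q\rangle}$, fixed by some $V\in\mathcal{V}_Q$ with $V\supseteq\pi^{-1}(W)$ for an open neighborhood $W$ of $1$. Since $gM_c=gM\cap gX^{\langle Q\rangle}=M_c$ for $g\in V$ and $\rho$ is equivariant with the $H$-action factoring through $\pi$, the set $M^{[Q]}=\rho(M_c)$ is fixed by every $\pi(g)$, $g\in V$, hence by the dense subset $D=W\cap\pi(H)\subseteq\pi(V)$ of $W$. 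Commensuration by $\pi(H)$ follows by pushing forward: $M_c\tu hM_c=(M\tu hM)\cap X^{\langle Q\rangle}$ is finite, and $M^{[Q]}\tu\pi(h)M^{[Q]}=\rho(M_c)\tu\rho(hM_c)\subseteq\rho(M_c\tu hM_c)$ is finite. To reach an arbitrary $q\in Q$, once $Q_{M^{[Q]}}$ is known open I would use that every coset of the dense subgroup $\pi(H)$ meets the open subgroup $Q_{M^{[Q]}}$, so $qM^{[Q]}=\pi(h)M^{[Q]}$ for some $h\in H$, giving commensuration on all of $Q$.

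The hard part is the openness of the setwise stabilizer $Q_{M^{[Q]}}$, that is, upgrading invariance under the dense $D$ to invariance under the whole neighborhood $W$. For this I would fix $q\in W$ and $\xi\in X^{[Q]}$; since the $Q$-action on the discrete set $X^{[Q]}$ is continuous (Proposition \ref{linearxq}), the point stabilizer $Q_\xi$ is open, so $qQ_\xi\cap W$ is a nonempty open set and, by density of $\pi(H)$, contains some $d=qs\in D$ with $s\in Q_\xi$. Then $d\xi=q\xi$ and, as $d\in D$ preserves $M^{[Q]}$ setwise, $\xi\in M^{[Q]}\Leftrightarrow d\xi\in M^{[Q]}\Leftrightarrow q\xi\in M^{[Q]}$. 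Since $\xi$ was arbitrary this gives $qM^{[Q]}=M^{[Q]}$, so $W\subseteq Q_{M^{[Q]}}$ and the stabilizer, being a subgroup containing a neighborhood of $1$, is open.
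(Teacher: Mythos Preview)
Your proof is correct and follows essentially the same route as the paper's. The first two parts (independence of $N\cap X^{\rangle Q\langle}$ and $H$-invariance of $M^{\rangle Q\langle}$) are identical in substance: both observe that $N_1\tu N_2\in\mathcal{F}(X)^{Q}$, hence lies in $X^{\langle Q\rangle}$.

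For the last part there are two small but genuine differences worth noting. First, you exploit the standing hypothesis that the stabilizer of $M$ is open (i.e.\ lies in $\mathcal{V}_Q$) to conclude directly that $M\in\mathcal{F}_M(X)^{Q}$ and then work with $N=M$; the paper instead takes a generic $N\in\mathcal{F}_M(X)^{Q}$, argues that $N^{[Q]}$ has an open $Q$-stabilizer, and transfers this to $M^{[Q]}$ via the commensuration $M^{[Q]}\sim N^{[Q]}$ together with continuity of the $Q$-action on $X^{[Q]}$. Your shortcut is legitimate and slightly cleaner. Second, the paper asserts without detail that $N^{[Q]}$ has an open stabilizer in $Q$, whereas you supply the missing argument explicitly: a priori one only knows that the dense set $W\cap\pi(H)$ fixes $M^{[Q]}$, and your pointwise argument using openness of each $Q_\xi$ is exactly what is needed to upgrade this to all of $W$. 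So in that respect your write-up is more complete than the paper's.
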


\begin{proof}
We have $N_1\tu N_2\in \mathcal{F}(X)^{Q}$. It follows that $N_1\tu N_2\subset X^{\langle Q\rangle}$, that is, $N_1\cap X^{\rangle Q\langle}=N_2\cap X^{\rangle Q\langle}$.
It follows in particular that $M^{\rangle Q\langle}$ is $H$-invariant. 

Having in mind that $X^{\langle Q\rangle}$ is $H$-invariant and the $H$-action factors to an action of $H$ on $X^{[Q]}$ by Proposition \ref{linearxq}, we obtain that $M^{[Q]}$ is commensurated by the $H$-action. By Proposition \ref{linearxq}, the $H$-action on $X^{[Q]}$ factors through a continuous action of $Q$. Since the action of $Q$ on $X^{[Q]}$ is continuous and $N^{[Q]}$ is commensurate to $M^{[Q]}$ and has an open stabilizer in $Q$, we see that $M^{[Q]}$ has an open stabilizer in $Q$ as well. It then follows, by density, that $M^{[Q]}$ is commensurated by $Q$ as well.
\end{proof}

\begin{prop}\label{fpmx}
Assume that $\mathcal{F}_M(X)^{Q}\neq\emptyset$. We have the equality 
\begin{align*}\mathcal{F}_M(X)^{Q}
& =\rho^*\mathcal{F}_{M^{[Q]}}(X^{[Q]})+\{M^{\rangle Q\langle}\}\\
=&
\Big\{N\in\mathcal{F}_M(X):\;  N\cap X^{\langle Q\rangle}=\rho^{-1}\big(\rho(N\cap X^{\langle Q\rangle})\big)
\textnormal{ and }
N\cap X^{\rangle Q\langle}=M^{\rangle Q\langle}\Big\}.
 \end{align*}
In particular $N\stackrel{\sigma}\mapsto\rho^*(N)+ M^{\rangle Q\langle}$ is a $Q$-equivariant  affine bijection from $\mathcal{F}_{M^{[Q]}}(X^{[Q]})$ to $\mathcal{F}_M(X)^{Q}$. Besides, $\rho^{-1}(M^{[Q]})\smallsetminus M$ is finite.
\end{prop}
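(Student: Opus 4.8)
The plan is to exhibit $\mathcal{F}_M(X)^{Q}$ as the image of the affine map $\sigma$ and to prove bijectivity by decomposing every relevant subset along the $H$-invariant partition $X=X^{\langle Q\rangle}\sqcup X^{\rangle Q\langle}$: the incompatible part is pinned down by Proposition \ref{nodep}, while the compatible part is controlled through the fibers of $\rho$.

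First I would analyze an arbitrary $N\in\mathcal{F}_M(X)^{Q}$. Its incompatible part is already determined, namely $N\cap X^{\rangle Q\langle}=M^{\rangle Q\langle}$ by Proposition \ref{nodep}. For the compatible part, Remark \ref{dqdis} furnishes some $V\in\mathcal{V}_Q$ stabilizing $N$; given $x\in N\cap X^{\langle Q\rangle}$, Proposition \ref{xqf} provides $V_0\in\mathcal{V}_Q$ with $V'x=\rho^{-1}(\rho(x))$ for every $V'\in\mathcal{V}_Q$ contained in $V_0$. Applying this to $V'=V\cap V_0\in\mathcal{V}_Q$ forces the whole fiber $\rho^{-1}(\rho(x))$ into $N$, so $N\cap X^{\langle Q\rangle}$ is a union of $\rho$-fibers, i.e.\ $N\cap X^{\langle Q\rangle}=\rho^{-1}(\rho(N\cap X^{\langle Q\rangle}))$. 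This yields the inclusion of $\mathcal{F}_M(X)^{Q}$ in the set displayed on the right and identifies the affine structure.

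Next I would verify that $\sigma:N\mapsto\rho^*(N)+M^{\rangle Q\langle}$ sends $\mathcal{F}_{M^{[Q]}}(X^{[Q]})$ into $\mathcal{F}_M(X)^{Q}$. The key, and least obvious, point is that \emph{every} $N\in\mathcal{F}_{M^{[Q]}}(X^{[Q]})$ is a $Q$-point: writing $N=M^{[Q]}\tu F$ with $F$ finite, the stabilizer of $N$ in $H$ contains the stabilizer of $M^{[Q]}$ intersected with the pointwise stabilizer of $F$; the former lies in $\mathcal{V}_Q$ because $M^{[Q]}$ has open stabilizer (Proposition \ref{nodep}), and the latter lies in $\mathcal{V}_Q$ because each point of $X^{[Q]}$ has stabilizer in $\mathcal{V}_Q$ (as $(X^{[Q]})^{Q}=X^{[Q]}$, cf.\ the proof of Proposition \ref{linearxq}) and $\mathcal{V}_Q$ is stable under finite intersection. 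A stabilizing $V$ then stabilizes $\rho^{-1}(N)$, hence $\sigma(N)=\rho^{-1}(N)\sqcup M^{\rangle Q\langle}$, a disjoint union across the invariant partition with $M^{\rangle Q\langle}$ being $H$-invariant. Commensurability of $\sigma(N)$ with $M$ then splits into commensurability of $\rho^{-1}(N)$ with $M\cap X^{\langle Q\rangle}$ and of $M^{\rangle Q\langle}$ with $M\cap X^{\rangle Q\langle}$, the latter being immediate since $M^{\rangle Q\langle}=N_0\cap X^{\rangle Q\langle}$ for a fixed $N_0\in\mathcal{F}_M(X)^{Q}$ (nonempty by hypothesis) and $N_0$ is commensurate to $M$.

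Finally I would assemble the bijection. Injectivity is clear after intersecting $\sigma(N)$ with $X^{\langle Q\rangle}$ and using surjectivity of $\rho$. Surjectivity and both displayed equalities follow from the first step: each $N\in\mathcal{F}_M(X)^{Q}$ equals $\sigma\big(\rho(N\cap X^{\langle Q\rangle})\big)$ once $\rho(N\cap X^{\langle Q\rangle})$ is seen to be commensurate to $M^{[Q]}$. This commensurability, together with the claimed finiteness of $\rho^{-1}(M^{[Q]})\smallsetminus M$, is the remaining bookkeeping: since fibers are finite and nonempty, $\rho$ carries a finite symmetric difference between a saturated set and $M\cap X^{\langle Q\rangle}$ to a finite one, and $\rho^{-1}$ carries finite symmetric differences back to finite ones; applying this to $N_0\cap X^{\langle Q\rangle}$ (saturated and commensurate to $M\cap X^{\langle Q\rangle}$) yields both assertions. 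The $Q$-equivariance of $\sigma$ then follows from $Q$-equivariance of $\rho^*$ (Proposition \ref{linearxq}) and $Q$-invariance of $M^{\rangle Q\langle}$. I expect the main obstacle to be the compatible-part analysis: proving that $N\cap X^{\langle Q\rangle}$ is saturated via Proposition \ref{xqf}, and that the naive domain $\mathcal{F}_{M^{[Q]}}(X^{[Q]})$ consists entirely of $Q$-points; the commensurability transfers are routine but must be checked on both the $\rho$ and $\rho^{-1}$ sides.
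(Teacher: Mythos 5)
Your proof is correct, and it reaches the conclusion by a somewhat different mechanism than the paper. Both arguments share the same backbone: the incompatible part of any $Q$-point is pinned by Proposition \ref{nodep}, and the compatible part must be a union of $\rho$-fibers (you derive saturation from Proposition \ref{xqf} via $V'=V\cap V_0$; the paper derives it from the fact that each fiber $P$ is a \emph{minimal} $Q$-compatible finite set, so $N\cap P\in\{\emptyset,P\}$ --- essentially the same observation). Where you diverge is in the reverse inclusion: you verify directly that every $N\in\mathcal{F}_{M^{[Q]}}(X^{[Q]})$ is a $Q$-point, by writing $N=M^{[Q]}\tu F$ and exhibiting an explicit element of $\mathcal{V}_Q$ inside its stabilizer (using the open stabilizer of $M^{[Q]}$ from Proposition \ref{nodep} and the openness of point stabilizers in $X^{[Q]}$), and you then carry out the commensurability bookkeeping on both the $\rho$ and $\rho^{-1}$ sides by hand. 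The paper instead first reduces via Lemma \ref{decfm} to the purely compatible case, notes that all three sets are affine subspaces of $\mathcal{F}_M(X)$ over $\mathbf{F}_2$ with the \emph{same linear part} $\mathcal{F}(X)^Q=\rho^*\mathcal{F}(X^{[Q]})$ (Proposition \ref{linearxq}), and concludes equality from the existence of a single common point; this sidesteps both your stabilizer computation and most of the commensurability transfers (the finiteness of $\rho^{-1}(M^{[Q]})\smallsetminus M$ is then extracted by noting the property is invariant under commensuration and evident for any $N\in\mathcal{F}_M(X)^Q$). Your route is more elementary and self-contained; the paper's exploits the affine structure for a shorter argument. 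No gap in yours --- the one point worth making explicit in a write-up is that $\mathcal{V}_Q$ is a filter (stable under finite intersections), which you use tacitly in both the saturation step and the stabilizer step.
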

\begin{proof}
To prove the formula, by Lemma \ref{decfm}, we can suppose that $X$ is either $Q$-incompatible or $Q$-compatible. The incompatible case is then immediate.
Now assume that $X$ is $Q$-compatible, so the statement to prove can be rewritten as
\[\mathcal{F}_M(X)^{Q}  =\rho^*\mathcal{F}_{M^{[Q]}}(X^{[Q]}) = \big\{N\in\mathcal{F}_M(X):\;  N=\rho^{-1}(\rho(N))\big\}.\]

All these are affine subspaces with the same linear part by Proposition \ref{linearxq}. So it is enough to show that they have a common point. Let us first check that every element in $\mathcal{F}_M(X)^{Q}$ is a union of fibers. Let $N$ be an element therein and let $P$ be a fiber. Then there exists $V\in\mathcal{V}_Q$ such that $gN=N$ and $gP=P$ for all $g\in V$. Thus $g(N\cap P)=N\cap P$, so $N\cap P$ is $Q$-compatible; since $P$ is a fiber, it follows that $N\cap P$ is either empty or equal to $P$, proving that $N$ is a union of fibers. It easily follows that any element of $\mathcal{F}_M(X)^{Q}$ belongs to the two other spaces.

Let us prove that  $\rho^{-1}(M^{[Q]})\smallsetminus M$ is finite. Note that this means that $M\cap X^{\langle Q\rangle}$ coincides, up to a finite set, with a union of fibers. Note that this property does not change if we replace $M$ by a commensurate subset. Hence we deduce it since it is obviously satisfied by $N$ for $N\in\mathcal{F}(M)^{Q}$. 

The $Q$-equivariance follows from $H$-equivariance.
\end{proof}

\subsection{$Q$-points in $\ell^p(X)$}\label{qlpx}

As previously, $X$ is an $H$-set; we forget here $M$, and we fix a real number $p\in [1,\infty\mathclose[$ and we proceed to describe $Q$-points in $\ell^p(X)$.

\begin{prop}\label{linac2}
We have
$$\ell^p(X)^{Q}=\ell^p_{[X^{[Q]}]}(X^{\langle Q\rangle})\simeq\ell^p(X^{[Q]})$$
(the latter isometric $Q$-equivariant isomorphism being described in \S\ref{soma}).
In particular $\ell^p(X)^{Q}=\{0\}$ if and only if $X$ is $Q$-incompatible.
\end{prop}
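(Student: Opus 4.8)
The plan is to identify the $Q$-points in $\ell^p(X)$ by combining the structure theory already developed for finite subsets with the description of $Q$-points in the simpler reference space $\ell^p(X^{[Q]})$. Since $\ell^p(X)$ is a complete normed real vector space and $H$ acts linearly, Lemma~\ref{wcloq} tells us $\ell^p(X)^Q$ is a closed $H$-invariant linear subspace; this gives us the ambient structure to work with. First I would prove the inclusion $\ell^p_{[X^{[Q]}]}(X^{\langle Q\rangle})\subset\ell^p(X)^Q$. Given $f\in\ell^p_{[X^{[Q]}]}(X^{\langle Q\rangle})$, I would approximate $f$ in the $\ell^p$-norm by finitely supported functions that are still constant on fibers of $\rho$ and supported in $X^{\langle Q\rangle}$; each such approximant is a finite $\mathbf{R}$-linear combination of indicators $\mathbf{1}_{\rho^{-1}(\{y\})}$ of individual fibers, and each such indicator is fixed by some $V\in\mathcal{V}_Q$ by Proposition~\ref{xqf} (the fibers are exactly the minimal $Q$-compatible finite subsets). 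Hence each approximant lies in $\ell^p(X)^Q$, and since $\ell^p(X)^Q$ is closed, so does $f$.

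Next I would prove the reverse inclusion $\ell^p(X)^Q\subset\ell^p_{[X^{[Q]}]}(X^{\langle Q\rangle})$, which amounts to showing that any $Q$-point $f$ (i) is supported in $X^{\langle Q\rangle}$ and (ii) is constant on the fibers of $\rho$. For (ii), I would argue exactly as in the proof of Proposition~\ref{fpmx}: if $f\in\ell^p(X)^Q$ and $P=\rho^{-1}(\{y\})$ is a fiber, there is a single $V\in\mathcal{V}_Q$ (intersecting the two relevant neighborhoods) with $\sup_{g\in V}\|gf-f\|_p$ arbitrarily small and $gP=P$ for all $g\in V$; since $V$ acts on the finite set $P$ by permutations and $\|gf-f\|_p$ can be made $<$ any positive quantity while taking discrete values on the coordinates indexed by $P$, the restriction of $f$ to $P$ must be $V$-invariant, hence constant because $V$ acts transitively on $P$ (the fiber being a minimal $Q$-compatible set, the stabilizer group acts with a single orbit — or at worst I refine $V$ so that the induced permutation group on $P$ is as small as the minimality allows, and invariance forces equality of the values). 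For (i), I would show that if $x\in X^{\rangle Q\langle}$ then the coordinate $f(x)$ must vanish: otherwise the $Q$-orbit-closure behavior forces infinitely many coordinates of comparable size (by the infinitude underlying $Q$-incompatibility via Proposition~\ref{xqf}, the set $Vx$ is infinite for every $V\in\mathcal{V}_Q$), contradicting $f\in\ell^p$; more precisely, for any $V$ the translates $gf$ for $g\in V$ move the mass $|f(x)|^p$ around an infinite set, making $\sup_{g\in V}\|gf-f\|_p$ bounded below, contradicting $f$ being a $Q$-point.

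Having established the equality $\ell^p(X)^Q=\ell^p_{[X^{[Q]}]}(X^{\langle Q\rangle})$, the identification with $\ell^p(X^{[Q]})$ is then immediate: the map $\phi'$ of \S\ref{soma}, applied to the surjection $\eta=\rho:X^{\langle Q\rangle}\to X^{[Q]}$ with finite fibers, is by Lemma~\ref{linid2} a bijective isometry $\ell^p_{[X^{[Q]}]}(X^{\langle Q\rangle})\to\ell^p(X^{[Q]})$, and it is $H$-equivariant, hence $Q$-equivariant by density of $\pi(H)$ in $Q$ together with the continuity of the $Q$-action from Proposition~\ref{cext}. The final ``in particular'' statement follows because $\ell^p(X^{[Q]})=\{0\}$ exactly when $X^{[Q]}=\emptyset$, which by Definition~\ref{qcom} means $X^{\langle Q\rangle}=\emptyset$, i.e.\ $X$ is $Q$-incompatible.

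I expect the main obstacle to be part (i) of the reverse inclusion, namely the rigorous argument that a $Q$-point cannot have any mass on the incompatible part $X^{\rangle Q\langle}$. The delicate point is to turn ``$Vx$ is infinite for every $V$'' into a quantitative lower bound on $\sup_{g\in V}\|gf-f\|_p$ that survives taking the infimum over $V\in\mathcal{V}_Q$; one must be careful that the mass $|f(x)|^p$ is genuinely displaced to \emph{fresh} coordinates rather than returning near its origin. The clean way to handle this is probably to fix $\eps>0$, use $f\in\ell^p$ to choose a finite set outside of which $f$ has $\ell^p$-mass $<\eps$, and then exploit that for $x\in X^{\rangle Q\langle}$ one can find $g\in V$ carrying $x$ out of any prescribed finite set (since otherwise $Vx$ would be finite and $x$ would lie in $X^{\langle Q\rangle}$ by Proposition~\ref{xqf}), forcing $\|gf-f\|_p^p\ge|f(x)|^p-\eps'$ uniformly; letting $\eps\to 0$ then shows $f(x)=0$.
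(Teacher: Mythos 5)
Your proposal is correct and follows essentially the same route as the paper: the inclusion $\supset$ via density/closedness of the $Q$-point subspace and the fact that fiber indicators $\mathbf{1}_{\rho^{-1}(\{y\})}$ are $Q$-points, the inclusion $\subset$ by showing a $Q$-point is supported on $X^{\langle Q\rangle}$ and constant on fibers (both steps resting on Proposition~\ref{xqf}, exactly as in the paper), and the final identification via the isometry $\phi'$ of \S\ref{soma}. The step you flag as delicate is handled in the paper by the slightly more direct observation that $\|gf-f\|_\infty\le\eps/2$ for all $g\in V$ forces $|f(gx)|\ge\eps/2$ on the infinite set $Vx$, contradicting $f\in\ell^p(X)$, but your variant with an exhausting finite set is equally valid.
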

\begin{proof}
The right-hand isomorphism $\simeq$ (which denotes an isometric linear isomorphism) follows from Lemma \ref{linid2}.

To show the $\supset$ inclusion, by density it is enough to show that for every $y\in X^{[Q]}$, we have $\mathbf{1}_{\rho^{-1}(\{y\})}\in \ell^p(X)^{Q}$; this is clear from Proposition \ref{linearxq}.

Conversely, suppose that $f\in\ell^p(X)^{Q}$. We have to show that $f$ is supported by $X^{\langle Q\rangle}$ and that $f$ is constant on fibers.

Let us first check the latter assertion.
Suppose that $y,z$ are in the same fiber $C$ of $X^{\langle Q\rangle}\to X^{[Q]}$. Let us verify that $f(y)=f(z)$. Otherwise, assume $f(y)\neq f(z)$ and set $\eps=|f(y)-f(z)|$. By Proposition \ref{xqf}, since $Q$ is a group unifilter, there exists $V_0\in \mathcal{V}_Q$ such that for every $V\in\mathcal{V}_Q$ with $V\subset V_0$ we have $Vy=C$. So for every $\mathcal{V}_Q\ni V\subset V_0$, there exists $g\in V$ such that $gy=z$. So $\|gf-f\|_p\ge |f(z)-f(y)|=\eps$ and thus, since this holds for all $V$, we deduce $f\notin\ell^p(X)^{Q}$.

Let us finally check that $f$ is supported by $X^{\langle Q\rangle}$.  Indeed, suppose that $x\notin X^{\langle Q\rangle}$ and assume by contradiction that $f(x)\neq 0$. Set $\eps=|f(x)|$. There exists $V\in\mathcal{V}_Q$ such that $\|gf-f\|_p\le\eps/2$ for every $g\in V$; in particular, $|f(gx)|\ge\eps/2$ for all $g\in V$. By Proposition \ref{xqf}, $Vx$ is infinite, which contradicts that $f\in\ell^p(X)$. 
\end{proof}

\subsection{$Q$-points in $\ell^p_M(X)$}

Consider again $M\subset X$ commensurated by the $H$-action.

\begin{prop}\label{cafa}
We have 
$$\ell^p_M(X)^Q\neq\emptyset\quad\Leftrightarrow\mathcal{F}_M(X)^{Q}\neq\emptyset.$$ 
Suppose now these conditions hold. Then, defining $M^{\rangle Q\langle}$ as in Proposition \ref{nodep}, we have the following equality of nonempty affine subspaces
\[\ell^p_M(X)^{Q}=\ell^p_{M\cap X^{\langle Q\rangle}}(X^{\langle Q\rangle})^{Q}+\mathbf{1}_{M^{\rangle Q\langle}}.\]
\end{prop}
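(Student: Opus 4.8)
The easy implication is that $\mathcal{F}_M(X)^Q\neq\emptyset$ forces $\ell^p_M(X)^Q\neq\emptyset$: given $N\in\mathcal{F}_M(X)^Q$, Remark \ref{dqdis} provides $V\in\mathcal{V}_Q$ with $gN=N$ for all $g\in V$, and then $\mathbf{1}_N\in\ell^p_M(X)$ (as $N$ is commensurate to $M$) is fixed by $V$, hence lies in $\ell^p_M(X)^Q$. The substance is the converse together with the displayed formula, and my plan is to prove both by splitting along the $H$-invariant partition $X=X^{\langle Q\rangle}\sqcup X^{\rangle Q\langle}$. First I would record the $\ell^p$-analogue of Lemma \ref{decfm}: for an $H$-invariant partition $X=X_1\sqcup X_2$ with $M_i=M\cap X_i$, restriction identifies $\ell^p_M(X)$ with $\ell^p_{M_1}(X_1)\times\ell^p_{M_2}(X_2)$ isometrically and $H$-equivariantly, with $\|gf-f\|_p^p=\|gf_1-f_1\|_p^p+\|gf_2-f_2\|_p^p$; since $\max(a,b)\le(a^p+b^p)^{1/p}\le a+b$, taking $\inf_V\sup_{g\in V}$ shows $f\in\ell^p_M(X)^Q$ iff both restrictions are $Q$-points, i.e. $\ell^p_M(X)^Q=\ell^p_{M_1}(X_1)^Q\times\ell^p_{M_2}(X_2)^Q$. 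Applying this together with Lemma \ref{decfm} to $X_1=X^{\langle Q\rangle}$ and $X_2=X^{\rangle Q\langle}$ reduces the equivalence and the formula to the two pure cases.

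For the compatible part I first note that any $f\in\ell^p_M(X)^Q$ is constant on the fibers of $\rho\colon X^{\langle Q\rangle}\to X^{[Q]}$: the affine $H$-action on $\ell^p_M(X)$ is given by the same formula $gf=f\circ g^{-1}$ as the linear action on $\mathbf{R}^X$, so the fiber-constancy argument of Proposition \ref{linac2} (using Proposition \ref{xqf} to find $g\in V$ with $gy=z$ for $y,z$ in a common fiber, whence $\|gf-f\|_p\ge|f(y)-f(z)|$) applies verbatim. Assuming now $X=X^{\langle Q\rangle}$, set $N=\{f\ge 1/2\}$, a union of fibers commensurate to $M$, and let $A=\{1/4<f<3/4\}$. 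Since $f-\mathbf{1}_M\in\ell^p$, the set $A$ is finite; being fiber-saturated it is a finite union of fibers, each $Q$-compatible by Proposition \ref{linearxq}, so there is $V_A\in\mathcal{V}_Q$ fixing each of them setwise, whence $gA=A$ and $g(N\cap A)=N\cap A$ for $g\in V_A$. This is the crux. Choosing $V\in\mathcal{V}_Q$ with $V\subseteq V_A$ and $\sup_{g\in V}\|gf-f\|_p<1/2$, any $x\in N\tu gN$ with $x,g^{-1}x\notin A$ would satisfy $|f(x)-f(g^{-1}x)|\ge 1/2$ and hence contribute at least $(1/2)^p$ to $\|gf-f\|_p^p<(1/2)^p$, which is impossible; thus $N\tu gN\subseteq A$, and there it is empty because $g(N\cap A)=N\cap A$ with $gA=A$. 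So $gN=N$ for $g\in V$, giving $N\in\mathcal{F}_M(X)^Q$ and the converse in the compatible case (the formula being trivial there, as $M^{\rangle Q\langle}=\emptyset$).

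For the incompatible part $Y=X^{\rangle Q\langle}$ I would use $\ell^p(Y)^Q=\{0\}$ from Proposition \ref{linac2}: if nonempty, $\ell^p_{M_2}(Y)^Q$ is an affine subspace directed by $\ell^p(Y)^Q=\{0\}$, hence a single point, which is $H$-fixed since $D^Q$ is $H$-invariant (Lemma \ref{wcloq}). An $H$-fixed $f_2$ with $f_2-\mathbf{1}_{M_2}\in\ell^p$ is constant on $H$-orbits; as $Y$ has no finite orbit (a finite orbit would be a $Q$-compatible subset of $Y$), summability along each infinite orbit forces that constant to be $0$ or $1$ and forces $\{f_2=1\}$ to be commensurate to $M_2$. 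Hence $f_2=\mathbf{1}_{N_2}$ for an $H$-invariant $N_2$ commensurate to $M_2$; this $N_2$ lies in $\mathcal{F}_{M_2}(Y)^Q$ and equals $M^{\rangle Q\langle}$ by Proposition \ref{nodep}, while conversely any such $N_2$ produces $\mathbf{1}_{N_2}\in\ell^p_{M_2}(Y)^Q$. Reassembling the two parts yields the equivalence $\ell^p_M(X)^Q\neq\emptyset\Leftrightarrow\mathcal{F}_M(X)^Q\neq\emptyset$ and the identity $\ell^p_M(X)^Q=\ell^p_{M\cap X^{\langle Q\rangle}}(X^{\langle Q\rangle})^Q+\mathbf{1}_{M^{\rangle Q\langle}}$.

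The main obstacle is the compatible converse: a $Q$-point of $\ell^p_M(X)$ only gives, a priori, a threshold set $N$ whose displacement function $\elk_N$ is bounded near the identity, and one must upgrade this to exact $V$-invariance. The resolution is precisely that the ``ambiguous'' set $A$ where $f$ is near $1/2$ is finite and fiber-saturated, so it is $Q$-compatible and can be neutralized by shrinking $V$, after which the $Q$-point estimate kills all discrepancies outside $A$ exactly rather than merely boundedly.
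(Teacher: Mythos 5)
Your proof is correct, but it takes a genuinely different and substantially heavier route than the paper's. Both arguments hinge on thresholding a $Q$-point $f\in\ell^p_M(X)^Q$ to produce an element of $\mathcal{F}_M(X)^Q$; the paper's trick is to note that $f^{-1}([1/3,2/3])$ is finite (since $f-\mathbf{1}_M\in\ell^p(X)$), so the range of $f$ misses some interval $[\alpha-\eps,\alpha+\eps]\subset\mathopen]0,1\mathclose[$, and to threshold at $\alpha$: choosing $V\in\mathcal{V}_Q$ with $\sup_{g\in V}\|f-gf\|_p\le\eps$, the bound $\|f-gf\|_\infty\le\eps$ prevents any point from crossing the gap, so $N=f^{-1}([\alpha,+\infty\mathclose[)$ is exactly $V$-invariant. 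This settles the hard implication in a few lines with no decomposition and no fiber analysis, and the displayed formula then follows at once because both sides are affine subspaces with the same linear part $\ell^p(X)^Q$ (supported on $X^{\langle Q\rangle}$ by Proposition \ref{linac2}) containing $\mathbf{1}_N$. You instead threshold at the fixed value $1/2$, which creates an ambiguous zone $A=\{1/4<f<3/4\}$ that you must neutralize; this is what forces your preliminary splitting $\ell^p_M(X)^Q=\ell^p_{M_1}(X_1)^Q\times\ell^p_{M_2}(X_2)^Q$, the fiber-constancy of $f$, and the separate rigidity argument on $X^{\rangle Q\langle}$ (where $\ell^p(Y)^Q=\{0\}$ makes the $Q$-point unique, hence an $H$-fixed indicator). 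All of these steps check out --- the treatment of $A$ as a finite, fiber-saturated, hence $Q$-compatible set is sound, and your incompatible-part analysis essentially re-derives pieces of Proposition \ref{nodep} and Corollary \ref{lpmx} --- so what your approach buys is extra structural information along the way, at the cost of invoking most of Section 3 where the paper needs only the finiteness of $f^{-1}([1/3,2/3])$. Had you placed the threshold in a gap of the range of $f$, the compatible/incompatible decomposition and the fiber-constancy step would have been unnecessary.
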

\begin{proof}
In the first statement, the implication $\Leftarrow$ is clear since if $N\in\mathcal{F}_M(X)^{Q}$ then $\mathbf{1}_N\in\ell^p_M(X)^{Q}$.

Conversely, assume that $\ell^p_M(X)^Q\neq\emptyset$, namely contains an element $f$. Then $f^{-1}([1/3,2/3])$ is finite. In particular, there exists $\alpha$ and $\eps>0$ such that $[\alpha-\eps,\alpha+\eps]\subset \mathopen]0,1\mathclose[$ and $[\alpha-\eps,\alpha+\eps]\cap f(X)=\emptyset$. 
Then there exists $V\in\mathcal{V}_Q$ such that $\sup_{g\in V}\|f-gf\|\le \eps$; in particular $\|f-gf\|_\infty\le\eps$. Define $N=f^{-1}([\alpha,+\infty\mathclose[)$. If $x\in X$ and $g\in V$, then either $f(x)>\alpha+\eps$, so $f(x)>\alpha$ and both $x$ and $gx$ belong to $N$, or $f(x)<\alpha-\eps$ and then similarly both $x$ and $gx$ belong to $N$. In particular, $gN=N$ for all $x\in V$ and hence $N\in\mathcal{F}(X)^{Q}$.

The second statement is clear.
\end{proof}

Under the assumption that $\mathcal{F}_M(X)^{Q}\neq\emptyset$, this reduces the study to the case where $X$ is $Q$-compatible. Using the notation of \S\ref{soma}, we have:

\begin{cor}\label{lpmx}
Assume that $X$ is $Q$-compatible and $\mathcal{F}_M(X)^{Q}\neq\emptyset$. Then
\[\ell^p_M(X)^{Q}=\ell^p_{[X^{[Q]}],M}(X).\]
If moreover $X$ is $H$-transitive, then the latter is $Q$-equivariantly isometric to $\ell^p_{M^{[Q]}}(X^{[Q]})$.
\end{cor}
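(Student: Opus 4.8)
The plan is to treat both equalities as equalities of affine subspaces of $\ell^p_M(X)$, so that everything reduces to the linear computation of Proposition \ref{linac2} together with the structure of $\mathcal{F}_M(X)^Q$ from Proposition \ref{fpmx}. First I would exhibit a common point of the two affine spaces. Since $X$ is $Q$-compatible we have $X^{\rangle Q\langle}=\emptyset$, so by hypothesis and Proposition \ref{fpmx} any $N\in\mathcal{F}_M(X)^Q$ is a union of $\rho$-fibers, $N=\rho^{-1}(\rho(N))$, and is commensurate to $M$; fix such an $N$. Then $\mathbf{1}_N$ is constant on fibers and $\mathbf{1}_N-\mathbf{1}_M\in\ell^p(X)$, so $\mathbf{1}_N\in\ell^p_{[X^{[Q]}],M}(X)$; and since $g\mathbf{1}_N=\mathbf{1}_N$ for all $g$ in a suitable $V\in\mathcal{V}_Q$, we also get $\mathbf{1}_N\in\ell^p_M(X)^Q$.

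Next I would identify the directions of the two affine spaces. A routine check (the difference of two $Q$-points is a $Q$-point of the direction space, and a $Q$-point plus a linear $Q$-point is again a $Q$-point) shows that $\ell^p_M(X)^Q$ is an affine subspace of $\ell^p_M(X)$ with direction $\ell^p(X)^Q$; by Proposition \ref{linac2} and $X=X^{\langle Q\rangle}$ this direction is exactly $\ell^p_{[X^{[Q]}]}(X)$, the space of $\ell^p$-functions constant on fibers. This is manifestly also the direction of $\ell^p_{[X^{[Q]}],M}(X)$. Two affine subspaces sharing a point and a direction coincide, which gives the first equality.

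For the second statement, $X$ being $H$-transitive and $\rho$ being $H$-equivariant force $X^{[Q]}$ to be $H$-transitive, so all fibers of $\rho$ have one and the same finite cardinality $d$. I would then apply Lemma \ref{afid2} with $\eta=\rho$: since $M$ is commensurate to the union of fibers $\rho^{-1}(M^{[Q]})$ (a consequence of Proposition \ref{fpmx}), the map $\varpi:\ell^p_{[X^{[Q]}],M}(X)\to\ell^p_{M^{[Q]}}(X^{[Q]})$ is a well-defined injective affine map with dense image, and it is $H$-equivariant, hence $Q$-equivariant by density of $\pi(H)$. Because the fibers have constant (in particular bounded) cardinality, its linear part $\phi$ is a linear isomorphism by Lemma \ref{linid2}, so $\varpi$ is a $Q$-equivariant affine bijection.

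The delicate point, and the step I expect to be the main obstacle, is the word ``isometric''. The linear part of $\varpi$ is $\phi$, not the isometry $\phi'$ of Lemma \ref{linid2}, and $\phi=d^{-1/p}\phi'$; hence $\varpi$ scales the $\ell^p$-distance by the constant $d^{-1/p}$, so it is a $Q$-equivariant similarity, a literal isometry exactly when $d=1$. One cannot repair this by substituting $\phi'$: that would send $\mathbf{1}_N$ to $d^{1/p}\mathbf{1}_{N^{[Q]}}\notin\ell^p_{M^{[Q]}}(X^{[Q]})$, and a short computation shows the resulting affine isometry fails to commute with the $Q$-action unless $d=1$. The honest conclusion is therefore that $\varpi$ is a $Q$-equivariant affine bijection, isometric up to the global scalar $d^{1/p}$ (equivalently, a genuine isometry once the counting metric on $X^{[Q]}$ is rescaled by $d$). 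Since a constant rescaling of the metric alters neither the induced affine actions nor their fixed points, bounded orbits, or cohomology, this is as strong as an isometry for every subsequent use of the corollary, and reduces to a literal isometry in the basic case $d=1$.
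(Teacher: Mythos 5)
Your argument is correct and follows essentially the same route as the paper's own proof: both establish the first equality by exhibiting the common point $\mathbf{1}_N$ for $N\in\mathcal{F}_M(X)^Q$ and identifying the common linear part $\ell^p(X)^Q=\ell^p_{[X^{[Q]}]}(X)$ via Proposition \ref{linac2}, and both handle the transitive case via the map $\phi$ of \S\ref{soma} together with the constancy of the fiber cardinality. Your caveat about the word ``isometric'' is well taken: the paper's own proof in fact only concludes that the relevant map is a homothety (of ratio $d^{-1/p}$ in your notation), which, as you observe, serves every subsequent purpose just as well.
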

\begin{proof}
If $N\in\mathcal{F}_M(X)^{Q}$, then both terms contain $\mathbf{1}_N$, and so both are affine subspaces. They have the same linear part $\ell^p(X)^{Q}=\ell^p_{[X^{[Q]}]}(X)$ by Proposition \ref{linac2}, so the equality follows. For the last statement, use the map $\phi$ of \S\ref{soma}: the transitivity implies that the projection $X\to X^{[Q]}$ has fibers of constant cardinal, hence $\phi'$ is a homothety.
\end{proof}

\section{Applications of $Q$-points}

\subsection{Existence of $Q$-points}

\begin{thm}\label{eqlp}
Let $H$ be a group with a homomorphism with dense image into a Hausdorff topological group $Q$. Let $X$ be an $H$-set and $M$ a commensurated subset, and define the associated function
\[\elk_M(g)=\#(M\tu gM),\quad g\in H.\] We have equivalences, for a given $1\le p<\infty$:
\begin{enumerate}
\item\label{rs1} $\ell^p_M(X)^Q\neq\emptyset$; 
\item\label{rs2} $\mathcal{F}_M(X)^Q\neq\emptyset$;
\item\label{rs3} there exists a neighborhood $V$ of 1 in $Q$ such that $\elk_M$ is bounded on $\pi^{-1}(V)$;
\item\label{rs4} there exist an open subgroup $\Omega$ of $Q$ and a subset $N$ commensurate to $M$ that is $\pi^{-1}(\Omega)$-invariant.
\end{enumerate}
\end{thm}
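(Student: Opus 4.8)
The plan is to prove the cycle of implications $(\ref{rs1})\Leftrightarrow(\ref{rs2})$, $(\ref{rs2})\Rightarrow(\ref{rs4})$, $(\ref{rs4})\Rightarrow(\ref{rs3})$, and $(\ref{rs3})\Rightarrow(\ref{rs2})$, using the structural results about $Q$-points established in the previous section. The equivalence $(\ref{rs1})\Leftrightarrow(\ref{rs2})$ is already essentially Proposition \ref{cafa}, so that step is free. For $(\ref{rs2})\Rightarrow(\ref{rs4})$, I would take $N\in\mathcal{F}_M(X)^Q$, which by definition (unwinding Definition \ref{xqq} via Remark \ref{dqdis}, since $\mathcal{F}(X)$ is discrete) means there is some $V\in\mathcal{V}_Q$ fixing $N$; setting $\Omega$ to be the subgroup generated by such a $V$ (intersected down to a symmetric piece to ensure it is an open subgroup), $N$ is $\pi^{-1}(\Omega)$-invariant and commensurate to $M$.

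\begin{proof}
The equivalence $(\ref{rs1})\Leftrightarrow(\ref{rs2})$ is the first statement of Proposition \ref{cafa}.

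$(\ref{rs2})\Rightarrow(\ref{rs4})$: Since $\mathcal{F}(X)$ is discrete, Remark \ref{dqdis} shows that $N\in\mathcal{F}_M(X)^Q$ means precisely that there exists $V\in\mathcal{V}_Q$ with $gN=N$ for all $g\in V$. Replacing $V$ by $V\cap V^{-1}$ (still in $\mathcal{V}_Q$) we may assume $V=V^{-1}$, and then the subgroup $\Omega$ of $Q$ generated by the interior of the image $\overline{\pi(V)}$ is open, with $\pi^{-1}(\Omega)$ generated by $\{g\in H:gN=N\}\supset V$. Thus $N$ is $\pi^{-1}(\Omega)$-invariant and commensurate to $M$.

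$(\ref{rs4})\Rightarrow(\ref{rs3})$: If $N$ is $\pi^{-1}(\Omega)$-invariant and commensurate to $M$, then for $g\in\pi^{-1}(\Omega)$ we have $M\tu gM=(M\tu N)\tu(N\tu gN)\tu(gN\tu gM)$, and since $gN=N$ while $\#(M\tu N)=\#(gM\tu gN)=c<\infty$, the triangle inequality for the symmetric-difference metric gives $\elk_M(g)=\#(M\tu gM)\le 2c$. Taking $V=\Omega$ (a neighborhood of $1$ since $\Omega$ is open), $\elk_M$ is bounded on $\pi^{-1}(V)$.

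$(\ref{rs3})\Rightarrow(\ref{rs2})$: Suppose $\elk_M\le C$ on $\pi^{-1}(V)$ for some neighborhood $V$ of $1$. Consider the affine isometric action of $H$ on the $\ell^1$-space $\ell^1_M(X)$ of \S\ref{ala}, where $\|{\mathbf 1}_M-g{\mathbf 1}_M\|_1=\elk_M(g)$. Shrinking $V$ so that $\pi^{-1}(V)=f^{-1}(\Omega)$ for a suitable neighborhood, the orbit $\pi^{-1}(V)\cdot{\mathbf 1}_M$ is bounded, so Theorem \ref{coexx} (whose local compactness hypotheses are unnecessary, as noted there) yields a $Q$-point, i.e.\ $\ell^1_M(X)^Q\neq\emptyset$. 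By the equivalence $(\ref{rs1})\Leftrightarrow(\ref{rs2})$ applied with $p=1$, we conclude $\mathcal{F}_M(X)^Q\neq\emptyset$.
\end{proof}

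The main obstacle is the implication $(\ref{rs3})\Rightarrow(\ref{rs2})$, which is where the real analytic content lives: one must convert the combinatorial boundedness of $\elk_M$ into the existence of a genuine $Q$-point, and the natural device is the fixed-point/$Q$-point theorem on CAT(0) spaces (Theorem \ref{coexx}) applied to the $\ell^1$-space $\ell^1_M(X)$. Two technical points deserve care: first, $\ell^1_M(X)$ with the $\ell^1$-metric is a complete CAT(0) space (indeed a real Hilbert-like median/$L^1$ space in which Theorem \ref{coexx} applies), so one should either confirm this directly or route through an $\ell^2$-type space; second, the passage from a neighborhood $V$ in $\mathcal{V}_Q$ to the precise form $f^{-1}(\Omega)$ required by Theorem \ref{coexx} must respect the definition of $\mathcal{V}_Q$. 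Note also that in $(\ref{rs4})\Rightarrow(\ref{rs3})$ I did not use that $\Omega$ is a subgroup, only that it is a neighborhood of $1$, so that direction is robust.
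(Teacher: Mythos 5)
Your overall architecture (cite Proposition \ref{cafa} for $(\ref{rs1})\Leftrightarrow(\ref{rs2})$, close the cycle through $(\ref{rs4})$ and $(\ref{rs3})$, and use Theorem \ref{coexx} for the return trip) matches the paper's, but two steps have genuine problems. First, in $(\ref{rs3})\Rightarrow(\ref{rs2})$ you apply Theorem \ref{coexx} to $\ell^1_M(X)$, asserting it is a complete CAT(0) space. It is not: a normed space is CAT(0) if and only if its norm comes from an inner product, and $\ell^1$ of a set with at least two points fails this (it is not even uniquely geodesic). You flag the worry yourself but leave it unresolved, so as written this step fails. The fix is exactly the ``$\ell^2$ route'' you mention: since $\mathbf{1}_M-g\mathbf{1}_M$ takes values in $\{0,\pm 1\}$, one has $\|\mathbf{1}_M-g\mathbf{1}_M\|_2^2=\elk_M(g)$, so boundedness of $\elk_M$ on $\pi^{-1}(V)$ gives a bounded orbit in the genuine Hilbert (hence complete CAT(0)) space $\ell^2_M(X)$, where Theorem \ref{coexx} does apply; this is what the paper does, concluding $(\ref{rs1})_2$ and then $(\ref{rs2})$ via Proposition \ref{cafa}.

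Second, in $(\ref{rs2})\Rightarrow(\ref{rs4})$ the decisive containment $\pi^{-1}(\Omega)\subseteq\{g\in H:\,gN=N\}$ is asserted (``$\pi^{-1}(\Omega)$ generated by $\{g:gN=N\}$'') but not proved, and it is not automatic: an element of $\pi^{-1}(\Omega)$ has image in $Q$ equal to a product of elements of your generating open set, and those factors need not lie in $\pi(H)$, so you cannot factor the element through elements known to stabilize $N$. The step can be repaired --- for instance, the stabilizer $H_N$ contains $\pi^{-1}(W)$ for some open $W\ni 1$, hence $\pi(H_N)$ is an open, therefore closed, subgroup of $\pi(H)$, whence $\pi^{-1}\big(\overline{\pi(H_N)}\big)=H_N$ and $\Omega=\overline{\pi(H_N)}$ works --- but some such density argument is required and is absent. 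The paper sidesteps this entirely via Proposition \ref{cext}: the $H$-action on the $Q$-points $\mathcal{F}_M(X)^Q$ factors through a continuous $Q$-action, so $g\mapsto\#(N\tu gN)$ is a continuous integer-valued function on $Q$ and its zero set is immediately an open subgroup $\Omega$ of $Q$ with $\pi^{-1}(\Omega)$ stabilizing $N$. Your $(\ref{rs4})\Rightarrow(\ref{rs3})$ and your appeals to Proposition \ref{cafa} are fine.
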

In spite of the short proof below, we should emphasize that the tricky implications, for which the work was done beforehand, are $(\ref{rs1})_p\Rightarrow$(\ref{rs2}) and (\ref{rs3})$\Rightarrow (\ref{rs1})_2$.

\begin{proof}
Denote by $(\ref{rs1})_p$ the Property $(\ref{rs1})$ for a given $p$.
To prove the equivalences, we are going to prove 

\[(\ref{rs1})_p\Rightarrow(\ref{rs2})\Rightarrow(\ref{rs4})\Rightarrow(\ref{rs3})\Rightarrow(\ref{rs1})_2 \textnormal{ and } (\ref{rs2})\Rightarrow(\ref{rs1})_p.\]

$(\ref{rs1})_p\Rightarrow$(\ref{rs2}) is part of Proposition \ref{cafa}.

(\ref{rs2})$\Rightarrow$(\ref{rs4})
Choose $N\in\mathcal{F}_M(X)^Q$. By Proposition \ref{cext}, the function $f:Q\to\mathbf{R}$, $g\mapsto\|\mathbf{1}_N-g\mathbf{1}_N\|_p^p$ is well-defined and continuous. Since it takes integer values, the set $\Omega=\{g:f(g)=0\}$ is open; it is clearly a subgroup.

(\ref{rs4})$\Rightarrow$(\ref{rs3}) is trivial.

(\ref{rs3})$\Rightarrow (\ref{rs1})_2$
Suppose that $\elk_M$ is bounded on $\pi^{-1}(V)$. By Theorem \ref{coexx} and using that a Hilbert space is a complete CAT(0) space, we deduce $\ell^2_M(X)^Q\neq\emptyset$. 

(\ref{rs2})$\Rightarrow(\ref{rs1})_p$ is clear since if $N\in\mathcal{F}_M(X)^Q$, then $\mathbf{1}_N\in\ell^p_M(X)^Q$.
\end{proof}

Note that by specifying the equivalence (\ref{rs3})$\Leftrightarrow$(\ref{rs4}) to $Q=1$, we get the classical result of Brailovsky, Pasechnik and Praeger \cite{BPP}:

\begin{cor}\label{cabou}
Let $H$ be a group acting on a set $X$ and $M$ a commensurated subset. If $\sup_{g\in H}\#(M\tu gM)<\infty$, then there exists a subset $N$ commensurate to $M$ and $H$-invariant.\qed
\end{cor}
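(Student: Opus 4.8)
The plan is to specialize Theorem~\ref{eqlp} to the trivial target group $Q=1$ and read off the conclusion directly. First I would observe that when $Q$ is the one-element group, the unique homomorphism $\pi:H\to 1$ has dense image and $1$ is trivially Hausdorff, so the hypotheses of Theorem~\ref{eqlp} are met for $H$ acting on $X$ with commensurated subset $M$. The only neighborhood of the identity in $Q=1$ is the whole group, so $\pi^{-1}(V)=H$ and $\mathcal{V}_Q=\{H\}$; consequently conditions (\ref{rs3}) and (\ref{rs4}) of the theorem collapse into their global forms.

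Next I would spell out what each relevant condition becomes under this specialization. Condition~(\ref{rs3}), that $\elk_M$ be bounded on $\pi^{-1}(V)$ for some neighborhood $V$ of $1$, reduces precisely to the hypothesis $\sup_{g\in H}\#(M\tu gM)<\infty$. Condition~(\ref{rs4}), the existence of an open subgroup $\Omega$ of $Q$ together with a subset $N$ commensurate to $M$ that is $\pi^{-1}(\Omega)$-invariant, reduces to the existence of an $H$-invariant subset $N$ commensurate to $M$, since the only open subgroup of the trivial group is $1$, giving $\pi^{-1}(\Omega)=H$. Thus the equivalence $(\ref{rs3})\Leftrightarrow(\ref{rs4})$ supplied by Theorem~\ref{eqlp} becomes exactly the implication asserted in Corollary~\ref{cabou}: boundedness of $\elk_M$ yields an $H$-invariant subset commensurate to $M$.

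Since Theorem~\ref{eqlp} is assumed already proved, the corollary follows with essentially no further work beyond this translation of hypotheses; there is no substantive obstacle. The only point requiring a moment's care is the bookkeeping that the specialization genuinely lands us in the stated equivalence rather than a degenerate statement: one must check that the $Q$-point machinery of Section~3 does not secretly require $Q$ nontrivial. But an inspection confirms it does not, so I would simply invoke Theorem~\ref{eqlp} with $Q=1$ and conclude.
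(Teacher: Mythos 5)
Your proposal is correct and is exactly the paper's own argument: the corollary is stated immediately after Theorem~\ref{eqlp} with the remark that it follows by specializing the equivalence (\ref{rs3})$\Leftrightarrow$(\ref{rs4}) to $Q=1$. Your translation of the hypotheses under this specialization matches the intended reading.
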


\begin{thm}\label{fwres}
Let $\pi:H\to Q$ be a homomorphism with dense image between locally compact groups. Suppose that $\pi$ is a Kazhdan homomorphism (\S\ref{kh}).  Then $H$ has Property FW (resp.~FW') if and only if $Q$ has Property FW (resp.~FW').
\end{thm}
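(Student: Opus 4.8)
The plan is to prove the two implications separately and to run the FW and FW' arguments in parallel, since they differ only by transitivity bookkeeping. I expect the implication ``$H$ has the property $\Rightarrow Q$ has it'' to be easy and to use only the density of $\pi(H)$, while the converse is where the Kazhdan hypothesis does all the work, through Theorem \ref{afr}.

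First I would prove that Property FW (resp.\ FW') of $H$ implies the same for $Q$. Given a continuous discrete $Q$-set $Y$ (transitive in the FW' case) and a commensurated subset $M$, I would pull $Y$ back to a continuous discrete $H$-set through $\pi$: point stabilizers pull back to open subgroups of $H$, and $M$ stays commensurated since $\elk_M^H=\elk_M^Q\circ\pi$. When $Y=Q/U$ is transitive, the equality $Q=\pi(H)U$ (because $\pi(H)U$ is open, is a union of left $U$-cosets so its complement is open too, and contains the dense subgroup $\pi(H)$, forcing the complement to be empty) shows $Y$ is also $H$-transitive. Property FW (resp.\ FW') of $H$ then yields an $H$-invariant subset $N$ commensurate to $M$. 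To upgrade $H$-invariance to $Q$-invariance I would use that $Q$ acts continuously and isometrically on $\ell^2_M(Y)$ (\S\ref{ala}), so that $q\mapsto\#(N\tu qN)=\|\mathbf{1}_N-q\mathbf{1}_N\|_2^2$ is a continuous integer-valued function on $Q$; hence $\mathrm{Stab}_Q(N)$ is an open, therefore closed, subgroup containing the dense subgroup $\pi(H)$, so it equals $Q$ and $N$ is $Q$-invariant. This direction uses no rigidity of $\pi$ beyond dense image.

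For the converse, assume $Q$ has Property FW (resp.\ FW'), and let $X$ be a continuous discrete $H$-set (transitive in the FW' case) with $M$ commensurated. The key step is to produce a $Q$-point: applying Theorem \ref{afr} to the continuous affine isometric action of $H$ on $\ell^2_M(X)$ gives $\ell^2_M(X)^Q\neq\emptyset$, and by the equivalence $(\ref{rs1})\Leftrightarrow(\ref{rs2})$ of Theorem \ref{eqlp} this is the same as $\mathcal{F}_M(X)^Q\neq\emptyset$. Now the reduction machinery applies: Propositions \ref{nodep} and \ref{fpmx} provide an $H$-invariant subset $M^{\rangle Q\langle}$, a subset $M^{[Q]}$ of $X^{[Q]}$ commensurated by the continuous $Q$-action and with open stabilizer, and a $Q$-equivariant affine bijection $N\mapsto\rho^*(N)+M^{\rangle Q\langle}$ from $\mathcal{F}_{M^{[Q]}}(X^{[Q]})$ onto $\mathcal{F}_M(X)^Q$. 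In the transitive case one checks that $X^{[Q]}$ is $Q$-transitive when $X$ is $Q$-compatible (it is an $H$-equivariant quotient of $X$, and $H$ acts through $Q$), while if $X$ is $Q$-incompatible then $X^{[Q]}=\emptyset$ and $M^{\rangle Q\langle}$ is itself an $H$-invariant subset commensurate to $M$, so $M$ is transfixed for free.

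In the remaining case I would invoke Property FW (resp.\ FW') of $Q$ to transfix $M^{[Q]}$: there is a $Q$-invariant subset $N^{[Q]}$ commensurate to $M^{[Q]}$ in $X^{[Q]}$. Pushing it through the bijection, $\rho^*(N^{[Q]})+M^{\rangle Q\langle}=\rho^{-1}(N^{[Q]})\tu M^{\rangle Q\langle}$ is $H$-invariant (since $\rho$ is $H$-equivariant for the $H$-action factoring through $Q$, and $M^{\rangle Q\langle}$ is $H$-invariant) and lies in $\mathcal{F}_M(X)$, hence is commensurate to $M$; thus $M$ is transfixed and $H$ has Property FW (resp.\ FW'). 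The one genuine obstacle is the very first step of this converse, namely establishing $\mathcal{F}_M(X)^Q\neq\emptyset$: this is exactly where the hypothesis that $\pi$ is Kazhdan enters, via the affine fixed-$Q$-point Theorem \ref{afr}. Everything downstream---the passage between $M$ on $X$ and $M^{[Q]}$ on $X^{[Q]}$, and the transitivity checks---is a formal consequence of the $Q$-point calculus of the previous section.
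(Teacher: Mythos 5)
Your proposal is correct and follows essentially the same route as the paper: apply Theorem \ref{afr} to the affine action on $\ell^2_M(X)$ to get a $Q$-point, pass to $\mathcal{F}_M(X)^Q\neq\emptyset$ via Proposition \ref{cafa} (equivalently Theorem \ref{eqlp}), use Propositions \ref{nodep} and \ref{fpmx} to reduce to the commensurated $Q$-set $(X^{[Q]},M^{[Q]})$ plus the already-transfixed incompatible part, and invoke Property FW (resp.\ FW') of $Q$. The only difference is that you spell out the easy direction (pullback of $Q$-sets and the open-stabilizer upgrade from $\pi(H)$- to $Q$-invariance), which the paper dismisses as clear.
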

\begin{proof}
Clearly if $H$ has Property FW or FW' then so does $Q$. Assume that $Q$ has Property FW. Let $X$ be a continuous discrete $H$-set and $M$ a commensurated subset. 

By Theorem \ref{afr}, for every continuous affine isometric action of $H$ on a Hilbert space $V$, we have $V^Q\neq\emptyset$. We apply this to $\ell^2_M(X)$, so $\ell^2_M(X)^Q\neq\emptyset$. By Proposition \ref{cafa}, we get $\mathcal{F}_M(X)^Q\neq\emptyset$. By Proposition \ref{nodep}, we deduce that $M^{[Q]}$ is commensurated by the $Q$-action, has an open stabilizer in $Q$, and $M\cap X^{\rangle Q\langle}$ is transfixed.

If $Q$ has Property FW, it follows that $M^{[Q]}$ is transfixed and its inverse image in $M^{\langle Q\rangle}$ is commensurate to $M\cap X^{\langle Q\rangle}$; it follows that the latter is transfixed. So $M$ is transfixed.

If $Q$ has only Property FW' and the $H$-set $X$ is assumed transitive, we obtain the same conclusion.
\end{proof}

\begin{proof}[Proof of Theorem \ref{ipart} for Property FW]
Write $S=S_1\times S'$ where $S_1$ has Property T, so that the projection of $\Gamma$ on $S'$ has a dense image. By Theorem \ref{mbl}, the projection $\Gamma\to S'$ is a Kazhdan homomorphism. Since $S'$ is virtually connected, it obviously has Property FW. Hence by Theorem \ref{fwres}, $\Gamma$ has Property FW. 
\end{proof}

\subsection{Application to Property FW and group extensions}

If $G$ is a locally compact group and $H$ a subgroup, we say that $(G,H)$ has relative Property FW if for every continuous action of $G$ on a discrete set $X$ commensurating a subset $M$, the subgroup $H$ leaves invariant some subset commensurate to $M$. Recall that $\elk_M$ is defined by $\elk_M(g)=\#(M\tu gM)$.

\begin{lem}\label{xtfw}
Let $G$ be a locally compact group, and $N_1\subset N_2$ closed normal subgroups. Suppose that $(G/N_1,N_2/N_1)$ has relative Property FW. Suppose that $X$ is a continuous discrete $G$-set and $M$ a commensurated subset. Suppose that $\elk_M$ is bounded on $N_1$. Then $\elk_M$ is bounded on $N_2$.
\end{lem}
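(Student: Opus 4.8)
The plan is to descend the commensurating action to the quotient $Q=G/N_1$, apply the relative Property FW hypothesis there, and lift the resulting invariant subset back to $X$. First I would invoke Corollary~\ref{cabou} (Brailovsky--Pasechnik--Praeger): since $\elk_M$ is bounded on $N_1$, there is a subset commensurate to $M$ and invariant under $N_1$. Replacing $M$ by this subset changes $\elk_M$ only by a bounded amount (so boundedness on $N_2$ is unaffected), and I may thus assume from now on that $M$ is $N_1$-invariant; then $gM$ is also $N_1$-invariant for every $g\in G$, because $N_1$ is normal.

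The obstacle to a naive descent to the orbit space $X/N_1$ is that $N_1$-orbits can be infinite, so a subset of $X/N_1$ commensurate to the image of $M$ need not lift to a subset of $X$ commensurate to $M$. I would circumvent this by splitting $X=X_0\sqcup X_1$ into the $G$-invariant parts on which the $N_1$-orbits are respectively infinite and finite. The key observation is that for $g\in N_2$ the set $M\tu gM$ is finite (by commensuration) and $N_1$-invariant, hence is a finite union of finite $N_1$-orbits and therefore lies in $X_1$. Consequently $M$ and $gM$ agree on $X_0$ for every $g\in N_2$, which shows that $M\cap X_0$ is $N_2$-invariant and that $\elk_M(g)=\elk_{M\cap X_1}(g)$ for all $g\in N_2$. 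This reduces everything to the action on $X_1$, where all $N_1$-orbits are finite.

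On $X_1$ the orbit map $X_1\to X_1/N_1$ has finite fibres. I would check that $X_1/N_1$ is a continuous discrete $G/N_1$-set (the stabilizer of $N_1x$ in $G$ equals $\{g:gx\in N_1x\}$, a finite union of cosets of the open subgroup $G_x$, hence open, and its image in $G/N_1$ is open) and that the image $\bar M$ of $M\cap X_1$ is commensurated by $G/N_1$, with $\elk_{\bar M}(\bar g)\le \elk_{M\cap X_1}(g)$. Applying the relative Property FW of $(G/N_1,N_2/N_1)$ produces a subset $\bar N\subset X_1/N_1$ that is $N_2/N_1$-invariant and commensurate to $\bar M$. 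Because the fibres of $X_1\to X_1/N_1$ are finite, the preimage $N$ of $\bar N$ is $N_2$-invariant and commensurate to $M\cap X_1$. Finally, for $g\in N_2$ one has $gN=N$, so writing $A=M\cap X_1$ the identity $A\tu gA=(A\tu N)\tu g(N\tu A)$ gives $\#(A\tu gA)\le 2\#(A\tu N)$, a bound independent of $g$; hence $\elk_M$ is bounded on $N_2$.

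The main obstacle is precisely this infinite-orbit issue, and it is resolved by the finite/infinite splitting above: the genuine content is the observation that the part of $M$ supported on infinite $N_1$-orbits is automatically $N_2$-invariant, so that only the part supported on finite $N_1$-orbits — where the descent to $X_1/N_1$ and the subsequent lifting both behave well — actually needs the relative Property FW input.
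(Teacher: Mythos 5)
Your proof is correct and follows essentially the same route as the paper's: the paper first replaces $M$ by an $N_1$-invariant commensurate subset (so $\mathcal{F}_M(X)^{G/N_1}\neq\emptyset$) and then invokes Proposition \ref{nodep} for $Q=G/N_1$, whose content in this case is exactly your finite/infinite $N_1$-orbit splitting: $M^{\rangle Q\langle}$ is the (automatically invariant) part of $M$ over infinite $N_1$-orbits, and $X^{[Q]}$ is your $X_1/N_1$ carrying the commensurated image $M^{[Q]}$, to which relative Property FW is applied before pulling back along the finite fibres. You have simply unpacked the $Q$-points formalism into an explicit elementary argument.
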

\begin{proof}
By assumption, some subset $N$ commensurate to $M$ is $N_1$-invariant. So $\mathcal{F}_M(X)^{G/N_1}\neq\emptyset$. By Proposition \ref{nodep}, $M^{[G/N_1]}$ is commensurated by the $G/N_1$-action on $X^{[G/N_1]}$. So it is commensurate to an $N_2/N_1$-invariant subset, by Property FW of $(G/N_1,N_2/N_1)$. Pulling back to $X$ and taking the union with $M^{\rangle N_2/N_1\langle}$ (see the notation in Proposition \ref{nodep}), we obtain a subset commensurate to $M$ and $N_2$-invariant.
\end{proof}

\begin{cor}\label{extfw}
Suppose that $G$ is a locally compact, compactly generated group.
Suppose that $N_1\subset N_2$ are closed normal subgroups of $G$, and that $(G,N_1)$ and $(G/N_2,N_1/N_2)$ have relative Property FW. Then $(G,N_2)$ has relative Property FW.\qed
\end{cor}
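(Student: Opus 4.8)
The plan is to obtain the corollary by chaining Lemma~\ref{xtfw} together with the Brailovsky--Pasechnik--Praeger criterion recorded in Corollary~\ref{cabou}. (I read the second hypothesis as ``$(G/N_1,N_2/N_1)$ has relative Property FW'', which is exactly the hypothesis of Lemma~\ref{xtfw}.) So I would fix a continuous discrete $G$-set $X$ and a subset $M\subset X$ commensurated by $G$; the goal is to produce a subset commensurate to $M$ and invariant under $N_2$.

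First I would translate relative Property FW of $(G,N_1)$ into a boundedness statement. By that hypothesis there is a subset $M'$ commensurate to $M$ and $N_1$-invariant. For $g\in N_1$ the symmetric difference satisfies $M\tu gM\subset(M\tu M')\cup(M'\tu gM')\cup(gM'\tu gM)$; the middle term is empty since $gM'=M'$, and the two outer terms each have cardinal $\#(M\tu M')$, whence $\elk_M(g)\le 2\,\#(M\tu M')$. Thus $\elk_M$ is bounded on $N_1$.

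Next I would apply Lemma~\ref{xtfw} verbatim: its two inputs are that $(G/N_1,N_2/N_1)$ has relative Property FW (our standing assumption) and that $\elk_M$ is bounded on $N_1$ (just established), and its conclusion is that $\elk_M$ is bounded on $N_2$. Finally I would invoke Corollary~\ref{cabou} for the action of the subgroup $N_2$ on $X$: boundedness of $g\mapsto\#(M\tu gM)$ on $N_2$ yields an $N_2$-invariant subset commensurate to $M$. Since this is precisely what relative Property FW of $(G,N_2)$ asks, and $X,M$ were arbitrary, the corollary follows.

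I do not expect a serious obstacle, as all the genuine content is already packaged in Lemma~\ref{xtfw} (which rests on the $Q$-point analysis, in particular Proposition~\ref{nodep}) and in Corollary~\ref{cabou}; what remains is only the two elementary translations between ``$N_i$ transfixes $M$'' and ``$\elk_M$ is bounded on $N_i$''. The one point worth flagging is that the compact generation hypothesis is never used in this chain, so it can apparently be omitted; I suspect it is retained only for uniformity with the surrounding results on compactly generated groups.
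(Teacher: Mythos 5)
Your proof is correct and is exactly the argument the paper intends: the corollary is stated with an immediate \qed because it is just Lemma~\ref{xtfw} sandwiched between the two elementary translations (via Corollary~\ref{cabou}) between transfixing and boundedness of $\elk_M$, and your reading of the second hypothesis as $(G/N_1,N_2/N_1)$ correctly fixes what is evidently a typo in the statement. Your observation that compact generation is not used in this chain is also accurate.
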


\begin{cor}\label{extfww}
Properties FW is closed under taking group extensions.
\end{cor}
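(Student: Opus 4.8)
The plan is to deduce Corollary~\ref{extfww} directly from Corollary~\ref{extfw} by a standard reformulation of ``closed under extensions'' in terms of the relative property. Suppose we are given a short exact sequence $1\to N\to G\to G/N\to 1$ of locally compact groups in which both $N$ and $G/N$ have Property FW, and we must show $G$ has Property FW. First I would observe that absolute Property FW for a group $K$ is exactly relative Property FW for the pair $(K,K)$: indeed, $(K,K)$ has relative FW means that for every continuous commensurating action, the whole group $K$ transfixes $M$, which is precisely the statement that $M$ is transfixed, i.e.\ Property FW for $K$. So the goal is to establish that $(G,G)$ has relative Property FW.

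The key step is to feed the chain $N_1=N\subset N_2=G$ into Corollary~\ref{extfw}, applied to the ambient group $G$. That corollary requires three inputs: that $G$ is compactly generated, that $(G,N_1)=(G,N)$ has relative Property FW, and that $(G/N_2,N_1/N_2)=(G/G,N/G)$ has relative Property FW. The last pair is the pair of trivial groups, for which relative FW is vacuous. For the middle hypothesis, I would argue that Property FW of the normal subgroup $N$ upgrades to relative Property FW of the pair $(G,N)$: given a continuous $G$-commensurating action on $(X,M)$, restrict the action to $N$; by Property FW of $N$ the subset $M$ is transfixed \emph{by} $N$, i.e.\ $N$ leaves invariant some subset commensurate to $M$, which is exactly relative FW for $(G,N)$. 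This uses nothing beyond restricting a $G$-action to the subgroup $N$.

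The remaining hypothesis of Corollary~\ref{extfw} is compact generation of $G$, and this is where I would have to be slightly careful, since it is the one genuine obstacle. By Proposition~\ref{fmcg} (and its FW analogue discussed in the Generalities subsection), a group with Property FW—or more to the point, the properties in play here—forces compact generation; and compact generation is itself closed under extensions, so from $N$ and $G/N$ compactly generated one gets $G$ compactly generated. Concretely I would note that Property FW of $G/N$ makes $G/N$ compactly generated, that Property FW of $N$ makes $N$ compactly generated, and that an extension of a compactly generated group by a compactly generated closed normal subgroup is compactly generated; hence $G$ is compactly generated and Corollary~\ref{extfw} applies.

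Assembling these, Corollary~\ref{extfw} yields that $(G,N_2)=(G,G)$ has relative Property FW, which by the first observation is exactly Property FW for $G$. The main obstacle, as indicated, is verifying the compact-generation hypothesis cleanly; everything else is a direct translation between the absolute and relative formulations. The whole argument is therefore essentially a one-line invocation of Corollary~\ref{extfw} once the relative/absolute dictionary and the compact-generation bookkeeping are in place.
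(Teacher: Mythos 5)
Your overall strategy is exactly the one the paper intends: specialize Corollary~\ref{extfw} to the chain $N_1=N\subset N_2=G$, after translating absolute Property FW into relative Property FW for the pairs $(G,G)$ (which is immediate from the definitions) and $(G,N)$ (which does follow from Property FW of $N$ by restricting the $G$-action, as you say). The compact-generation bookkeeping is also fine: Property FW implies compact generation for locally compact groups (this is in \cite{CorFW}; it is not actually the content of Proposition~\ref{fmcg}, which concerns FM, so your pointer is slightly off), and compact generation passes to extensions.

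There is, however, one step that as written would fail. You apply Corollary~\ref{extfw} literally, reading its second hypothesis as ``$(G/N_2,N_1/N_2)$ has relative Property FW'' and concluding that with $N_1=N$, $N_2=G$ this is the pair of trivial groups, hence vacuous. But $N_1/N_2$ is not even defined when $N_1\subset N_2$; the statement of Corollary~\ref{extfw} contains a typo, and the hypothesis must be $(G/N_1,N_2/N_1)$, matching Lemma~\ref{xtfw} from which it is deduced. Under your literal reading the corollary would assert that any compactly generated group containing a normal subgroup with Property FW has Property FW (take $N_1=1$, $N_2=G$), which is false, e.g.\ for $G=\mathbf{Z}$. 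The symptom in your argument is that Property FW of $G/N$ is never used in the main deduction, only for compact generation --- a sure sign something is off, since the conclusion certainly requires it. With the corrected hypothesis, what you need is that $(G/N,\,G/N)$ has relative Property FW, i.e.\ precisely that $G/N$ has Property FW, which is the given assumption; after this one repair your proof is complete and coincides with the paper's.
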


This can also be proved (less directly) using the following characterization: a locally compact group has Property FW if and only if every continuous action on a nonempty oriented connected median graph has a fixed vertex. 

\section{Property FM}\label{s_ame}

\subsection{Eymard-amenability and property FM}

Let $G$ be a topological group and $X$ a continuous discrete $G$-set. Recall that the $G$-set $X$ is called {\em Eymard-amenable} if there is a $G$-invariant mean on $X$. Note that this trivially holds if there is a finite orbit in $X$.

\begin{defn}
We say that the topological group $G$ has
\begin{itemize}
\item {\em Property FM} if every continuous Eymard-amenable continuous discrete $G$-set has a finite orbit;
\item {\em Property FM'} if every continuous {\em transitive} Eymard-amenable continuous discrete $G$-set is finite.
\end{itemize}
\end{defn}

Note that FM trivially implies FM'. The converse is not true for countable discrete groups, see Corollary \ref{slnq}.

\begin{que}
Are Properties FM and FM' equivalent for finitely generated groups?
\end{que}

Property FM' is usually easier to check, but less convenient to deal with formally.

The following fact is immediate.

\begin{fact}\label{squot}
Let $G,H$ are topological groups and $f:G\to H$ is a continuous homomorphism with dense image. If $G$ has Property FM, so does $H$.
\end{fact}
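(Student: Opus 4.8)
The plan is to unwind the definitions directly, since this is the easy direction of a transfer principle. I want to show: if $G$ has Property FM and $f:G\to H$ is continuous with dense image, then $H$ has Property FM. So I start with an arbitrary continuous discrete $H$-set $Y$ carrying an $H$-invariant mean $\mu$, and I must produce a finite $H$-orbit in $Y$.

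First I would pull back the action along $f$: the composite $G\xrightarrow{f}H\curvearrowright Y$ makes $Y$ into a $G$-set, and this action is continuous because $f$ is continuous and point stabilizers in $Y$ are open in $H$, so their $f$-preimages are open in $G$. Next I would observe that the same mean $\mu$ is $G$-invariant for this pulled-back action: for $g\in G$ and a subset $A\subset Y$ we have $g\cdot A=f(g)\cdot A$, and $\mu$ is $f(g)$-invariant, so $\mu(g\cdot A)=\mu(A)$. Thus $Y$, viewed as a $G$-set, is Eymard-amenable. By Property FM for $G$, there is a finite $G$-orbit $G\cdot y=f(G)\cdot y$ in $Y$.

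The final step is to promote this finite $f(G)$-orbit to a finite $H$-orbit, and this is where the density hypothesis enters and is the only point requiring a small argument. Since the $H$-stabilizer $H_y$ is open and $f(G)$ is dense, $f(G)$ meets every coset of $H_y$ that it needs to — more precisely, the finite set $f(G)\cdot y$ equals $(f(G)H_y/H_y)\cdot y$, and by density of $f(G)$ together with openness of $H_y$ one has $f(G)H_y=H$, so $f(G)\cdot y=H\cdot y$. Hence $H\cdot y$ is finite, giving the desired finite $H$-orbit. The only mild obstacle is checking the continuity of the pulled-back action and verifying $f(G)H_y=H$, both of which follow immediately from $H_y$ being open and $f(G)$ being dense; there is no genuine difficulty here, consistent with the statement being labeled an immediate fact.
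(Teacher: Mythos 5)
Your proof is correct and is precisely the standard argument the paper has in mind when it labels this statement an immediate fact (the paper gives no written proof): pull the action back along $f$, note the mean stays invariant, apply Property FM for $G$, and use density of $f(G)$ together with openness of the stabilizer $H_y$ to conclude $f(G)\cdot y=H\cdot y$. Nothing is missing.
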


\begin{prop}\label{fmext}
Properties FM and FM' are closed under taking extensions of topological groups: if $G$ is a topological group, $N$ a normal subgroup, and if both $N$ and $Q=G/N$ have Property FM (resp.\ FM'), then so does $G$.
\end{prop}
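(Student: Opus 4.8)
The plan is to treat the two reductions---along $N$ and along $Q$---separately, pushing the invariant mean forward through the quotient map $\pi\colon X\to X/N$ and exploiting that normality of $N$ makes the partition of $X$ into $N$-orbits $G$-invariant. First I fix notation: let $X$ be a continuous discrete $G$-set with a $G$-invariant mean $m$. Restricting the action to $N$ makes $X$ a continuous discrete $N$-set (the stabilizer $N\cap G_x$ is open in $N$) and $m$ is in particular $N$-invariant. Since $N\trianglelefteq G$ we have $g\cdot(Nx)=N(gx)$, so $G$ permutes the $N$-orbits and each $g$ restricts to a bijection $Nx\to N(gx)$; in particular the union $X_{\mathrm{fin}}$ of all \emph{finite} $N$-orbits is $G$-invariant. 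The quotient $Y=X/N$ is a $Q$-set (the $N$-action on it being trivial), and it is a continuous discrete $Q$-set because the stabilizer of $Nx$ in $Q=G/N$ has open preimage $NG_x$ in $G$, hence is open; the same applies to the $Q$-invariant subset $X_{\mathrm{fin}}/N$.

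For Property FM the crux is the concentration claim $m(X_{\mathrm{fin}})=1$. I would prove it by contradiction: if $m(X\smallsetminus X_{\mathrm{fin}})>0$, then $X\smallsetminus X_{\mathrm{fin}}$ is a $G$-invariant (hence $N$-invariant) continuous discrete $N$-set with no finite $N$-orbit, and normalizing $m$ on it yields an $N$-invariant mean, contradicting Property FM of $N$. Granting this, $m$ restricts to a mean of mass $1$ on $X_{\mathrm{fin}}$, which pushes forward along $\pi$ to a $Q$-invariant mean on the continuous $Q$-set $X_{\mathrm{fin}}/N$. Property FM of $Q$ then yields a finite $Q$-orbit in $X_{\mathrm{fin}}/N$, i.e.\ a $G$-invariant finite family of finite $N$-orbits; its union is a nonempty finite $G$-invariant subset of $X$, which necessarily contains a finite $G$-orbit. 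This concentration step is the main obstacle, since it is exactly the point where non-amenability of $N$ (in the guise of Property FM) must be converted into information about where the mean lives; everything else is bookkeeping about continuity and equivariance.

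For Property FM' I would argue slightly differently, because restricting the action to $N$ destroys transitivity. Here $X$ is $G$-transitive, so $Y=X/N$ is $Q$-transitive and carries the pushforward mean; Property FM' of $Q$ gives $|Y|=k<\infty$. The $N$-orbits $O_1,\dots,O_k$ are then permuted transitively by $G$, so by $G$-invariance of $m$ they all have the same measure, forcing $m(O_i)=1/k>0$. Fixing one orbit $O_1$, the normalized restriction $k\,m|_{O_1}$ is an $N$-invariant mean on the transitive continuous $N$-set $O_1$, so Property FM' of $N$ gives $|O_1|=d<\infty$; since $G$-transitivity forces every $O_i$ to have cardinality $d$, we conclude $|X|=kd<\infty$. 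The delicate point in this case is purely one of ordering: one must first obtain finiteness of $Y$, so that $k<\infty$ and hence $m(O_i)=1/k$ is strictly positive, before one is entitled to normalize the mean on a single $N$-orbit and invoke Property FM' of $N$.
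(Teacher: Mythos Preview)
Your proof is correct and follows essentially the same route as the paper: for FM you reduce to the finite-$N$-orbit part $X_{\mathrm{fin}}$, push the mean to $X_{\mathrm{fin}}/N$, and apply FM of $Q$; for FM' you first use FM' of $Q$ on $X/N$ and then FM' of $N$ on a single orbit of positive mean. You are in fact slightly more careful than the paper in two spots (you spell out the concentration step $m(X_{\mathrm{fin}})=1$ by contradiction, and you note that the pullback of a finite $Q$-orbit is a finite $G$-invariant set rather than literally a single $G$-orbit), but the argument is the same.
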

\begin{proof}
Suppose that $N$ and $Q$ have Property FM. Let $X$ be a continuous discrete $G$-set with an invariant mean. By Property FM of $N$, this mean is supported by the union $Y$ of finite $N$-orbits in $X$. Let $Y'$ be the quotient of $Y$ by the $N$-action. Then $Y'$ is a discrete $Q$-set with an invariant mean, and the $Q$-action on $Y'$ is continuous: indeed if $y'\in Y$, let $y$ be a preimage of $y'$ in $y$; the stabilizer $G_y$ is open, and hence the image of $G_y$ in $Q$ is open, and is contained in $Q_{y'}$, so $Q_{y'}$ is open. Accordingly, by Property FM of $Q$, there is a finite $Q$-orbit in $Y$. Its inverse image is thus a finite $G$-orbit in $X$.

Now suppose that $N$ and $Q$ have Property FM'. Let $X$ be a transitive continuous discrete $G$-set with an invariant mean. Let $X'$ be the quotient of $X$ by the $N$-action; again it is a continuous discrete $Q$-set and has a $Q$-invariant mean, and therefore by Property FM' for $Q$, we deduce that $X'$ is finite. So the $N$-orbits form a finite partition of $X$; hence each of the $N$-orbits has the same nonzero mean; by Property FM' for $N$, these orbits are finite and hence $X$ is finite.
\end{proof}

\begin{rem}
If $G$ is an arbitrary topological group (not necessarily Hausdorff), Proposition \ref{fmext} can be applied when $N$ is the closure of $\{1\}$. This shows that $G$ has Property FM if and only if the Hausdorff quotient $G/\overline{\{1\}}$ has Property FM.
\end{rem}

The following proposition was obtained, with a similar proof, in \cite[Lemma 2.14]{GM} for countable discrete groups. The latter was itself was inspired by Kazhdan's proof that countable discrete groups with Property T are finitely generated. Recall that if $G$ is a group and $H$ is a subgroup, $G$ is {\em finitely generated over $H$} if $G$ is generated by the union of $H$ and some finite subset of $G$.

\begin{prop}\label{fmcg}
Let $G$ be a topological group with Property FM. Then it is finitely generated over any open subgroup. In particular, if $G$ is locally compact then it is compactly generated.
\end{prop}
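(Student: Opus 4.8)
The plan is to adapt, in the mean-theoretic setting, Kazhdan's proof that Property T groups are finitely generated, exactly in the spirit of \cite[Lemma 2.14]{GM}. I would fix an open subgroup $H$ and argue by contradiction, assuming that $G$ is \emph{not} finitely generated over $H$. Consider the set $\mathcal{L}$ of all subgroups $L$ with $H\subseteq L\subseteq G$ that are finitely generated over $H$. Each such $L$ is automatically open, since it contains the open subgroup $H$, and under the standing assumption it is proper, since $G\in\mathcal{L}$ is precisely what we are negating. The first routine observation is that $\mathcal{L}$ is directed by inclusion: for $L,L'\in\mathcal{L}$ the subgroup $\langle L,L'\rangle$ is again finitely generated over $H$, hence proper, hence lies in $\mathcal{L}$. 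Moreover $\bigcup_{L\in\mathcal{L}}L=G$, because every $g\in G$ lies in $\langle H,g\rangle\in\mathcal{L}$.

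Next I would build a continuous Eymard-amenable $G$-set with no finite orbit, to contradict Property FM. Take $X=\bigsqcup_{L\in\mathcal{L}}G/L$, a continuous discrete $G$-set because each $L$ is open. Each orbit $G/L$ is infinite: if $L$ had finite index then $G$ would be finitely generated over $L$ by coset representatives, hence over $H$, contradicting our assumption. So $X$ has no finite orbit. To produce a $G$-invariant mean, I would choose the base points $x_L=eL\in G/L\subseteq X$ and note that for each fixed $g\in G$ the set $\{L\in\mathcal{L}:g\in L\}$ is upward-closed and nonempty, while for any finite $F\subseteq G$ directedness provides some $L\in\mathcal{L}$ with $F\subseteq L$; hence these sets form a filter base on $\mathcal{L}$. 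Fixing an ultrafilter $\mathcal{U}$ on $\mathcal{L}$ refining this filter base, I would set $m(\varphi)=\lim_{L\to\mathcal{U}}\varphi(x_L)$ for $\varphi\in\ell^\infty(X)$, which is a mean since $\mathcal{U}$ is an ultrafilter and each $\varphi$ is bounded.

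The only genuinely delicate point, and hence the main obstacle, is verifying that $m$ is $G$-invariant; everything else is bookkeeping. For $g\in G$ and $\varphi\in\ell^\infty(X)$ one computes $m(g\varphi)=\lim_{L\to\mathcal{U}}\varphi(g^{-1}x_L)$, and since $\{L:g\in L\}\in\mathcal{U}$ we have $g^{-1}x_L=x_L$ for $\mathcal{U}$-almost every $L$, so that $m(g\varphi)=m(\varphi)$. Thus $m$ is a $G$-invariant mean on $X$ with no finite orbit, contradicting Property FM, and therefore $G$ is finitely generated over $H$. For the final assertion, if $G$ is locally compact I would pick a compact symmetric identity neighbourhood $V$ and set $H=\langle V\rangle$, which is open (any subgroup with nonempty interior is open) and compactly generated; applying the proposition gives $G=\langle H,g_1,\dots,g_n\rangle$, exhibiting $G$ as generated by the compact set $V\cup\{g_1,\dots,g_n\}$, so $G$ is compactly generated, the discrete case being $V=\{1\}$.
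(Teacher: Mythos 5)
Your proof is correct and follows essentially the same route as the paper: form the directed family of open subgroups finitely generated over $H$, put an invariant mean on the disjoint union of the corresponding coset spaces via an ultrafilter limit of Dirac masses at the base points, and invoke Property FM. The only cosmetic difference is that you frame it as a contradiction, whereas the paper applies FM directly to extract a finite orbit $G/G_i$ and then concludes that $G$ is finitely generated over that $G_i$, hence over $H$.
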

\begin{proof}
Let $H$ be an open subgroup and let $(G_i)$ be the family of open subgroups finitely generated over $H$. Then $I$ is a net (for the inclusion of the $G_i$). The disjoint union of the discrete $G$-sets $G/G_i$ is Eymard-amenable: indeed, if $x_i$ is the base-point in $G/G_i$ and if $m$ is a mean obtained as a limit point of the Dirac measures at $x_i$ (thus $m$ is actually an ultrafilter, i.e.\ takes values in $\{0,1\}$), then $m$ is $G$-invariant. By Property FM, there exists a finite $G$-orbit. In other words, some $G_i$ has finite index in $G$. Since $G_i$ is finitely generated over $H$, it follows that $G$ is also finitely generated over $H$.

As for the second statement: if $G$ is locally compact, then it always possesses a compactly generated open subgroup $H$ (namely, the subgroup generated by any compact neighborhood of 1). By the first statement, $G$ is finitely generated over $H$; it follows that $G$ is compactly generated.
\end{proof}

\begin{rem}\label{fwsb}
If $G$ is an uncountable discrete group, then it does not have Property FM by Proposition \ref{fmcg}. On the other hand, many such groups, including permutation groups of infinite sets, are {\em strongly bounded} in the sense that every isometric action on any metric space has bounded orbits \cite{Ber}; this implies Property FW for such groups.
\end{rem}

\begin{prop}\label{tfw}
Let $G$ be a locally compact group with Property T. Then it has Property FM.
\end{prop}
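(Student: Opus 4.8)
The plan is to convert the hypothesis into the representation-theoretic language of Property~T: I want to show that the existence of a $G$-invariant mean on $X$ forces the permutation representation $\lambda$ of $G$ on $\ell^2(X)$ to weakly contain the trivial representation $\mathbf{1}_G$, and then to read off a finite orbit from the resulting nonzero invariant vector. Note first that since the action on the discrete set $X$ is continuous, all point stabilizers are open, so $\lambda$ is strongly continuous and Property~T genuinely applies to it.

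First I would produce almost invariant vectors. Writing $P\subset\ell^1(X)$ for the convex set of probability measures (nonnegative, with $\ell^1$-norm $1$), and identifying $\ell^1(X)$ with a weak-$*$ dense subset of the space of means in $\ell^\infty(X)^*$, the invariant mean $m$ is a weak-$*$ limit of a net in $P$; hence for each $g$ the differences $g\mu-\mu$ along this net tend weak-$*$ to $gm-m=0$. Fixing a compact $K\subset G$ and $\eps>0$, I consider the continuous linear map $T\colon\ell^1(X)\to C(K,\ell^1(X))$ given by $T\mu=(g\mapsto g\mu-\mu)$, which is well defined since the $\ell^1$-representation is strongly continuous. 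The image $T(P)$ is convex and $0$ lies in its weak closure by the previous observation, so, since the norm and weak closures of a convex set coincide (Mazur), $0$ lies in the norm closure of $T(P)$. This is the Reiter property: there is $\mu\in P$ with $\sup_{g\in K}\|g\mu-\mu\|_1\le\eps$. Setting $\xi=\mu^{1/2}\in\ell^2(X)$, a unit vector, the pointwise inequality $|\sqrt a-\sqrt b|^2\le|a-b|$ gives $\|g\xi-\xi\|_2^2\le\|g\mu-\mu\|_1$, whence $\sup_{g\in K}\|g\xi-\xi\|_2\le\eps^{1/2}$. As $K$ and $\eps$ were arbitrary, $\mathbf{1}_G\prec\lambda$.

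Property~T then furnishes a nonzero $G$-invariant vector $\xi_0\in\ell^2(X)$. Such a $\xi_0$ attains a maximal modulus $c=\max_{x\in X}|\xi_0(x)|>0$, and the level set $F=\{x\in X:|\xi_0(x)|=c\}$ is nonempty and finite, since only finitely many coordinates of an $\ell^2$ vector can have modulus $\ge c$. Invariance of $\xi_0$ makes $F$ a finite $G$-invariant set, hence a finite union of finite orbits; in particular $X$ has a finite orbit, which is exactly Property~FM.

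I expect the main obstacle to be the first step, namely upgrading the bare existence of an invariant mean to genuine almost invariant vectors: the weak-$*$ approximation of $m$ yields only weak (not norm) smallness of $g\mu-\mu$, and it is precisely the Day--Namioka convexity argument that promotes this to the norm estimate, uniformly over a compact $K$. The remaining steps --- continuity of $\lambda$ from openness of the stabilizers, and the square-root trick relating $\ell^1$- and $\ell^2$-almost-invariance --- are routine.
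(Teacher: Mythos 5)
Your overall route is exactly the paper's: invariant mean $\Rightarrow$ $\mathbf{1}_G\prec\ell^2(X)$ $\Rightarrow$ (by Property T) a nonzero invariant vector $\Rightarrow$ a finite orbit. The continuity of the permutation representation, the square-root trick $\|g\mu^{1/2}-\mu^{1/2}\|_2^2\le\|g\mu-\mu\|_1$, and the level-set argument producing a finite orbit are all fine. The paper, however, delegates the first implication to Eymard's lemma (Lemma \ref{eyml2}) and explicitly warns that for non-discrete $G$ the weak-$*$ argument only yields almost invariance for the \emph{underlying discrete group}, and that ``a significant amount of additional work (essentially following an argument of Reiter) is needed.'' That warning points at precisely the step where your proof has a genuine gap.

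The gap is the claim that $0$ lies in the \emph{weak} closure of $T(P)$ in $C(K,\ell^1(X))$. What the weak-$*$ approximation of the mean gives is only that $T\mu_\alpha(g)=g\mu_\alpha-\mu_\alpha\to 0$ weakly in $\ell^1(X)$ for each \emph{fixed} $g$. For an infinite compact $K$, pointwise weak convergence of a bounded net is strictly coarser than convergence in the weak topology $\sigma\bigl(C(K,\ell^1),C(K,\ell^1)^*\bigr)$: the dual of $C(K,\ell^1)$ contains all $\ell^\infty(X)$-valued measures of bounded variation on $K$, and a bounded net converging pointwise need not converge against these. (Already for scalar values and $K=[0,1]$: the net, indexed by finite subsets $F\subset[0,1]$, of continuous functions vanishing on $F$, with sup norm $1$ and integral close to $1$, converges pointwise to $0$ but not weakly.) Mazur's theorem requires the genuine weak topology, so your argument only delivers, by intersecting finitely many weak neighbourhoods, vectors that are $(F,\eps)$-invariant for \emph{finite} $F\subset G$. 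This is genuinely weaker than what Property T needs: the continuous representation $\bigoplus_{n\ge1}\chi_n$ of the circle, $\chi_n(t)=e^{int}$, has $(F,\eps)$-invariant unit vectors for every finite $F$ (by pigeonhole) but satisfies $\sup_{t}\|\pi(t)\xi-\xi\|^2\ge 2$ for every unit vector $\xi$, so it has no almost invariant vectors over the compact set $K=\mathbf{T}$. Closing the gap requires the Reiter-type upgrade (regularizing the approximating measures by convolution with functions in $L^1(G)$ to gain uniformity on compacta), which is the content of the lemma of Eymard that the paper invokes; as written, your argument proves Proposition \ref{tfw} only for discrete $G$.
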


\begin{lem}[Eymard {\cite[Expos\'e n\textsuperscript{o}3]{Eym}}]\label{eyml2}
Let $G$ be a locally compact group and $X$ a continuous discrete $G$-set. Then $\ell^2(X)$ has nonzero invariant vectors if and only if $X$ has a finite $G$-orbit, and has almost invariant vectors if and only if the $G$-set $X$ is Eymard-amenable.
\end{lem}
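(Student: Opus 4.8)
The plan is to work with the natural permutation unitary representation of $G$ on $\ell^2(X)$, given by $(g\cdot f)(x)=f(g^{-1}x)$, which is continuous precisely because the point stabilizers are open. For the first equivalence, one implication is immediate: if $O$ is a finite $G$-orbit, then $\mathbf{1}_O\in\ell^2(X)$ is a nonzero invariant vector. Conversely, if $f\in\ell^2(X)$ is nonzero and invariant, then $|f|$ is constant on each orbit, so choosing $x_0$ with $f(x_0)\neq 0$ confines the orbit of $x_0$ to the finite set $\{x:|f(x)|\ge|f(x_0)|\}$; hence that orbit is finite.

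For the second equivalence, the central device is the square-root trick relating the $\ell^1$- and $\ell^2$-norms. Suppose first that $\ell^2(X)$ has almost invariant vectors, say unit vectors $\xi_i$ with $\sup_{g\in K}\|g\xi_i-\xi_i\|_2\to 0$ for each compact $K$. Put $\mu_i=|\xi_i|^2$, a net of probability measures in $\ell^1(X)$. The pointwise inequality $|a^2-b^2|\le|a-b|(a+b)$ combined with Cauchy--Schwarz gives $\|g\mu_i-\mu_i\|_1\le 2\|g\xi_i-\xi_i\|_2\to 0$ for every fixed $g$. Any weak-$*$ cluster point $m$ of $(\mu_i)$ in the weak-$*$ compact set of means on $X$ is then a state satisfying $gm=m$ for all $g$ (since $g$ acts weak-$*$ continuously and $g\mu_i-\mu_i\to 0$), so $X$ is Eymard-amenable.

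The reverse implication is the substantial direction. Starting from an invariant mean $m$, viewed as a $G$-fixed state on $\ell^\infty(X)$, and using that the finitely supported probability measures in $\ell^1(X)$ are weak-$*$ dense in the set of all means, we obtain a net $(\nu_j)$ of such measures with $\nu_j\to m$ weak-$*$, so that $g\nu_j-\nu_j\to 0$ weakly in $\ell^1(X)$ for each fixed $g$. Given a finite set $F\subset G$, the set $\{(g\nu-\nu)_{g\in F}:\nu\in\ell^1(X)\text{ a probability measure}\}\subset\ell^1(X)^F$ is convex and contains $0$ in its weak closure, hence, by Mazur's theorem, in its norm closure; thus for every $\eps>0$ there is a finitely supported probability measure $\mu$ with $\max_{g\in F}\|g\mu-\mu\|_1<\eps$. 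Applying the reverse square-root trick (using $(\sqrt a-\sqrt b)^2\le|a-b|$), the unit vector $\sqrt\mu\in\ell^2(X)$ satisfies $\|g\sqrt\mu-\sqrt\mu\|_2\le\|g\mu-\mu\|_1^{1/2}<\eps^{1/2}$ for $g\in F$.

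The main obstacle is to upgrade this finite-set almost-invariance to the uniformity over \emph{compact} sets required by the definition. Since $G^0$ is contained in every open subgroup and hence acts trivially on the discrete set $X$, both properties at issue depend only on the action of the totally disconnected quotient $G/G^0$; replacing $G$ by this quotient, van Dantzig's theorem furnishes a compact open subgroup $U$, whose orbits on $X$ are finite. Covering a given compact $K$ by finitely many cosets $k_1U,\dots,k_mU$ and restricting to finitely supported $U$-invariant probability measures $\mu$ (for which $g\mu=k_i\mu$ whenever $g\in k_iU$, so that $\sup_{g\in K}\|g\mu-\mu\|_1=\max_i\|k_i\mu-\mu\|_1$) reduces the continuous uniformity to the genuinely finite requirement that $\max_i\|k_i\mu-\mu\|_1$ be small. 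Running the convexity argument above inside the convex set of $U$-invariant finitely supported probability measures --- legitimate because $m$ is $U$-invariant and such measures are weak-$*$ dense in the $U$-invariant means --- and then applying the square-root trick yields the desired almost invariant vectors. This passage through a compact open subgroup is the crux; everything else is the routine functional analysis sketched above.
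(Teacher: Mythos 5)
Your reductions (the first equivalence; the direction ``almost invariant vectors $\Rightarrow$ invariant mean'' via the square trick; the Day--Namioka convexity argument giving $\ell^2$-almost-invariance over \emph{finite} subsets of $G$; the passage to $G/G^0$ and the choice of a compact open subgroup $U$ with finite orbits) are all correct, and you have located the crux exactly where the paper locates it: the paper's ``proof'' is only a commentary deferring the hard direction to Eymard, warning that the standard weak-$*$ argument only yields almost invariant vectors for the \emph{underlying discrete group} and that ``a significant amount of additional work (essentially following an argument of Reiter) is needed''. Your proposal attempts to supply that work, but the step it rests on is false.

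The claim that the $U$-invariant finitely supported probability measures are weak-$*$ dense in the $U$-invariant means --- equivalently, that every $U$-invariant mean $m$ satisfies $\langle m,\bar f\rangle=\langle m,f\rangle$, where $\bar f$ averages $f$ over $U$-orbits --- does not follow from invariance of $m$ under each element of $U$, and it can fail. Take $U=(\mathbf{Z}/2)^{\mathbf{N}}$ acting on $X=\bigsqcup_S(\mathbf{Z}/2)^S$ (union over finite nonempty $S\subset\mathbf{N}$) via the coordinate projections, and let $h$ restrict on the component indexed by $S$ to the character $\chi_S(y)=(-1)^{\sum_{s\in S}y_s}$; then $\bar h=0$. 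For any finitely supported probability measure $\mu$ on $U$, the atoms of $\mu$ generate a finite subgroup, so some nonzero $\chi_S$ is trivial on all of them, and then $\mu*h$ coincides with $h$ on that component, whence $\sup_x(\mu*h)(x)=1$ for every such $\mu$. Since $U$ is abelian, the sublinear functional $p(h)=\inf_\mu\sup_x(\mu*h)(x)$ dominates, by Hahn--Banach, a $U$-invariant mean $m$ with $m(h)=p(h)=1$, whereas every $U$-invariant finitely supported probability measure $\nu$ (a convex combination of uniform measures on the finite orbits) satisfies $\langle\nu,h\rangle=\langle\nu,\bar h\rangle=0$. So $m$ is not in the weak-$*$ closure of your convex set, and the Mazur argument cannot be ``run inside'' it. The same obstruction reappears if you instead orbit-average a net $\nu_j\to m$: one would need $\langle m,\int_U uf\,du\rangle=\int_U\langle m,uf\rangle\,du$, an interchange that is not valid for a merely finitely additive mean. (In this example $G=U$ is compact, so the Lemma itself is not threatened; the point is that the intermediate claim your proof uses is false in general.) Repairing this --- exploiting the full $G$-invariance of $m$, or replacing finitely supported measures by $\phi*\nu$ with $\phi$ a continuous approximate identity on $G$ and then justifying the analogous interchange --- is precisely the Reiter-type argument that the paper declines to reproduce and for which it cites Eymard.
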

\begin{proof}[On the proof] The first statement is trivial and the $\Rightarrow$ implication of the second statement is an easy argument. A standard argument, using the weak-* topology, also allows to prove the $\Leftarrow$ implication when $G$ is discrete, but in general, it only shows that if $X$ is Eymard-amenable, then $\ell^2(X)$ has almost invariant vectors as a representation of the underlying discrete group. A significant amount of additional work (essentially following an argument of Reiter) is needed. We refer to Eymard \cite[Expos\'e n\textsuperscript{o}3]{Eym}.
\end{proof}

\begin{proof}[Proof of Proposition \ref{tfw}]
Let $G$ have Property T. Let $X$ be a continuous discrete $G$-set with an invariant mean. Then $\ell^2(X)$ almost has invariant vectors, by Lemma \ref{eyml2}. By Property T, $\ell^2(X)$ has nonzero invariant vectors, so $X$ has a finite $G$-orbit.
\end{proof}

Let us also mention the well-known

\begin{lem}\label{multia}
Let $G$ be a locally compact group and a family $(H_i)$ of amenable open subgroups. If the $G$-set $X=\bigsqcup_i G/H_i$ is Eymard-amenable, then $G$ is amenable.
\end{lem}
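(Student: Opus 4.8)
The plan is to verify amenability of $G$ directly, by producing a left-invariant mean on $L^\infty(G)$. The idea is to \emph{induce} the given $G$-invariant mean $m$ on $\ell^\infty(X)$ up to an invariant mean on $L^\infty(G)$, using the amenability of the stabilizers $H_i$ to average a function over each coset. This is the standard principle that an action with amenable stabilizers on a set carrying an invariant mean forces the group to be amenable, carried out uniformly over the orbits $G/H_i$.

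Concretely, first I would fix, for each $i$, a left-invariant mean $\mu_i$ on $L^\infty(H_i)$; this exists because $H_i$, being an open subgroup, is itself a locally compact amenable group. Given $f\in L^\infty(G)$, I would define a function $F_f$ on $X=\bigsqcup_i G/H_i$ by coset-wise averaging: for a coset $gH_i$ set $F_f(gH_i)=\mu_i\big(h\mapsto f(gh)\big)$, obtained by applying $\mu_i$ to the bounded function $h\mapsto f(gh)$ on $H_i$. The first point to verify is that this is independent of the representative $g$: replacing $g$ by $gh_0$ turns the integrand $h\mapsto f(gh)$ into its left $H_i$-translate, which $\mu_i$ does not see by left-invariance. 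Since $|F_f|\le\|f\|_\infty$ and $X$ is discrete (the $H_i$ being open), we have $F_f\in\ell^\infty(X)$, and $f\mapsto F_f$ is positive, linear and unital.

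I would then set $M(f)=m(F_f)$ and check that $M$ is a left-invariant mean on $L^\infty(G)$. Positivity, linearity and $M(1)=1$ follow at once from the corresponding properties of $m$ and of the $\mu_i$. For invariance, the central computation is that the averaging map intertwines left translation on $L^\infty(G)$ with the $G$-action on $\ell^\infty(X)$: writing $({}_sf)(g)=f(s^{-1}g)$, one finds $F_{{}_sf}(gH_i)=\mu_i\big(h\mapsto f(s^{-1}gh)\big)=F_f(s^{-1}gH_i)=(s\cdot F_f)(gH_i)$, so that $F_{{}_sf}=s\cdot F_f$. Hence $M({}_sf)=m(s\cdot F_f)=m(F_f)=M(f)$ by $G$-invariance of $m$, and therefore $G$ is amenable.

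The only genuine obstacle I expect is the measure-theoretic bookkeeping hidden in the coset-averaging step: since elements of $L^\infty$ are equivalence classes, one must make precise the ``restriction of $f$ to a coset'' and the well-definedness up to null sets. This is handled using that each $H_i$ is open, so Haar measure on $G$ restricts compatibly to the cosets and left translation preserves $L^\infty$; the left-invariance of $\mu_i$ on equivalence classes then yields independence of the representative. I do not expect the cardinality or multiplicity of the index set to intervene, since $m$ is a single mean on all of $\ell^\infty(X)$ and the construction is performed uniformly on each orbit.
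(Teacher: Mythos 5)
Your proof is correct, but it takes a genuinely different route from the paper. You construct a left-invariant mean on $L^\infty(G)$ directly, by composing two averaging steps: first a coset-wise average $f\mapsto F_f$, $F_f(gH_i)=\mu_i\bigl(h\mapsto f(gh)\bigr)$, using a fixed invariant mean $\mu_i$ on each amenable open subgroup $H_i$ (well-defined on representatives by left-invariance of $\mu_i$, and well-defined on $L^\infty$-classes because $H_i$ is open, so Haar measure and its locally null sets restrict compatibly to each coset), and then the given invariant mean $m$ on $\ell^\infty(X)$; the intertwining identity $F_{{}_sf}=s\cdot F_f$ makes $M=m\circ(f\mapsto F_f)$ invariant. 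The paper instead uses Day's fixed-point characterization of amenability: given a continuous affine $G$-action on a compact convex set $K$, amenability of each $H_i$ yields an $H_i$-fixed point $x_i$, hence a $G$-equivariant map $X\to K$; pushing $m$ forward gives an invariant mean on $\ell^\infty(K)$, which restricts to an invariant probability measure on $K$, whose barycenter is a $G$-fixed point. Your approach buys a more elementary and self-contained argument (no fixed-point theorem, no barycenter machinery) at the cost of the $L^\infty$/locally-null-set bookkeeping you rightly flag; the paper's approach trades that bookkeeping for the cited barycenter theorem and works entirely with the fixed-point formulation. Both establish topological amenability of $G$, and your verification of well-definedness, positivity, normalization and equivariance is complete.
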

\begin{proof}
Let $G$ act on a convex compact subset $K$ of a locally convex vector space, by affine transformations. Then for each $i$, there exists a point $x_i\in K$ fixed by $H_i$. So there is a $G$-equivariant function $X\to K$, mapping the base-point of $G/H_i$ to $x_i$. The push-forward of an invariant mean on $\ell^\infty(X)$ provides an invariant mean on the space $\ell^\infty(K)$. The latter thus restricts to a probability measure, given by a linear form on $C(X)$. The barycenter of this probability measure (see \cite[Theorem 2.29]{Luk}) is a fixed point by~$G$.
\end{proof}

\begin{prop}\label{fmfi}
Let $G$ be a topological group and $H$ an open subgroup of finite index. Then $G$ has Property FM (resp.\ FM') if and only if $H$ has Property FM (resp.\ FM'). Actually, the implications $\Leftarrow$ also hold when $H$ is closed cocompact in $G$.
\end{prop}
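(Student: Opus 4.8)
The plan is to reduce to two implications, exploiting that an open finite-index subgroup is automatically closed and cocompact (its complement is a finite union of cosets, each open, so $H$ is closed, and $G/H$ is finite hence compact). Thus it suffices to prove (i) the implication $\Leftarrow$ — that Property FM (resp.\ FM') of $H$ passes to $G$ — in the generality where $H$ is closed and cocompact, and (ii) the implication $\Rightarrow$ — that Property FM (resp.\ FM') of $G$ passes to $H$ — when $H$ is open of finite index. The continuity hypotheses transfer for free in both directions: if $X$ is a continuous $G$-set then $H\cap G_x$ is open in $H$, so $X$ is a continuous $H$-set, and conversely an induced set of a continuous $H$-set is a continuous $G$-set.

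For (i), let $X$ be a continuous Eymard-amenable $G$-set with invariant mean $m$; restricting $m$ to $H$ makes $X$ an Eymard-amenable $H$-set. In the FM case, Property FM of $H$ yields a finite $H$-orbit $F=H\cdot x$, so $H_x=H\cap G_x$ has finite index in $H$; being of finite index in the closed cocompact subgroup $H$, it is itself closed and cocompact. The stabilizer $G_x$ is open by continuity and contains $H_x$, so the discrete set $G/G_x$ is a continuous image of the compact space $G/H_x$, hence compact, hence finite: the $G$-orbit of $x$ is finite. For the FM' case I first need an $H$-transitive piece of positive mass. The key point is that the $G$-transitive set $X=G/G_x$ has only finitely many $H$-orbits: the continuous surjection $G\to G/G_x=X$ descends to a continuous surjection from the compact space $H\backslash G$ onto the discrete space $H\backslash X$, forcing $H\backslash X$ to be finite. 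Among these finitely many orbits one carries positive $m$-mass; normalizing $m$ there gives an $H$-invariant mean on a transitive $H$-set, which Property FM' of $H$ forces to be finite, and then the same stabilizer argument as above shows $X=G/G_x$ is finite.

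For (ii), with $H$ open of index $n$, let $X$ be a continuous Eymard-amenable $H$-set and form the induced $G$-set $Y=G\times_H X$ (a continuous $G$-set, made of $n$ ``copies'' of $X$ permuted by $G$). Choosing coset representatives $g_1=1,\dots,g_n$ and writing $f_i(x)=f([g_i,x])$, the formula $M(f)=\frac1n\sum_{i=1}^n m(f_i)$ defines a mean on $Y$; using that left translation permutes the cosets $g_iH$ and that $m$ is $H$-invariant one checks $M(g\cdot f)=M(f)$, so $Y$ is Eymard-amenable. In the FM case, Property FM of $G$ gives a finite $G$-orbit $\mathcal O\subseteq Y$; applying a suitable $g_i^{-1}$ shows $\mathcal O$ meets the fiber $\{[1,x]:x\in X\}$, which is $H$-invariant (as $h[1,x]=[1,hx]$) and $H$-equivariantly identified with $X$, so $\mathcal O$ contains a finite $H$-orbit in $X$. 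In the FM' case I use instead the canonical $G$-isomorphism $G\times_H(H/K)\cong G/K$: thus $Y\cong G/K$ is $G$-transitive and Eymard-amenable, Property FM' of $G$ makes it finite, whence $[H:K]\le[G:K]<\infty$ and $X=H/K$ is finite.

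The routine parts are the verification that $M$ is a normalized $G$-invariant state and that induction preserves continuity. The main obstacle is the FM' half, since it cannot be treated orbit-by-orbit in the naive way: restricting a $G$-transitive action to $H$ need not be transitive, and the mean need not be supported on a single orbit. The crux is therefore the compactness argument showing $H\backslash X$ is finite (so that some orbit carries positive mass), together with the fact that induction sends transitive $H$-sets to transitive $G$-sets; these two observations are precisely what make the transitive versions go through.
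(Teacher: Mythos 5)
Your proof is correct and follows essentially the same route as the paper's: the forward direction by averaging an $H$-invariant mean over coset representatives (your induced set $G\times_H X$ is just the paper's embedding $\bigsqcup H/L_i\hookrightarrow\bigsqcup G/L_i$ in disguise), and the backward direction by restriction together with the compactness fact that a transitive continuous discrete $G$-set has only finitely many $H$-orbits when $H$ is cocompact. The only difference is organizational: you locate an $H$-orbit of positive mass directly, where the paper first reformulates FM$'$ in terms of sets with finitely many orbits, and you spell out the stabilizer argument ($H_x$ cocompact, $G_x$ open, hence $G/G_x$ finite) that the paper leaves implicit.
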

\begin{proof}
Suppose that $H$ is cocompact and has Property FM. Let $G$ act on $X$ preserving a mean. Then $H$ has a finite orbit, so $G$ as well. 

For the same implication with Property FM', it is convenient to first observe that a topological group $G$ has Property FM' if and only if for every continuous discrete $G$-set with {\em finitely many} orbits and an invariant mean, there is a finite orbit. Now observe that every orbit $G/\Omega$ splits into finitely many $H$-orbits, because $H\backslash (G/\Omega)=(H\backslash G)/\Omega$ is finite by a compactness argument. Since, for a $G$-action on a discrete set, having finitely many orbits is preserved by restricting to $H$, the same argument as for Property FM thus carries over.

Now assume that $H$ has finite index and $G$ has Property FM'. Let $H$ act transitively, say on $H/L$ with $L$ open, with an invariant mean. Then given the action of $G$ on $G/L$, the same mean is preserved by $H$, so by averaging, we obtain a $G$-invariant mean on $G/L$. By Property FM', it follows that $G/L$ is finite and hence $H/L$ is finite, so $H$ has Property FM'.

Finally assume that $G$ has Property FM. Any $H$-action can be described as the action on the disjoint union $\bigsqcup H/L_i$ for a suitable family of open subgroups $L_i$; we assume it has an invariant mean. The latter set sits inside the disjoint union $\bigsqcup G/L_i$, with the same mean preserved by $H$, so by averaging we obtain a $G$-invariant mean, and by Property FM there is a finite orbit $G/L_i$, so $H/L_i$ is finite as well. 
\end{proof}

Let us say that a topological group is {\em aperiodic} if it has no proper open subgroup of finite index, and is {\em virtually aperiodic} if it has a finite index open aperiodic subgroup, or equivalently if it has only finitely many finite index open subgroups.

Let us also mention the following result extracted from \cite{GM} about free products. 

\begin{thm}[Glasner and Monod]\label{glamo}
Let $H_1,H_2$ be nontrivial countable discrete groups and consider their free product $G=H_1\ast H_2$. Then we have the following:
\begin{enumerate}
\item\label{oth} If both $H_1$ and $H_2$ have a finite proper quotient, then $H_1\ast H_2$ does not have Property FM' (and hence does not have Property FM)
\item\label{oth2} If $H_1$ is infinite and $H_2$ is not virtually aperiodic, then $H_1\ast H_2$ does not have Property FM' (and hence does not have Property FM);
\item\label{vafm} Suppose that none of the previous two applies, i.e.\ if $H_1$ and $H_2$ are virtually aperiodic and either $H_1$ or $H_2$ is aperiodic. Then if both $H_1$ and $H_2$ have Property FM, so does $H_1\ast H_2$.
\end{enumerate}
In particular, $H_1\ast H_2$ has Property FM if and only if $H_1$ and $H_2$ have Property FM and are virtually aperiodic, and at least one is aperiodic.
\end{thm}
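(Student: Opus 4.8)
The plan is to deduce the final ``in particular'' from assertions (\ref{oth})--(\ref{vafm}) together with Proposition \ref{fmfi} (invariance of FM and FM$'$ under passage to finite-index subgroups) and Fact \ref{squot} (FM passes to quotients; the same holds for FM$'$, by pulling transitive actions back along a surjection). Writing $G=H_1\ast H_2$: if $G$ has FM then each $H_i$, being a quotient of $G$, has FM; moreover FM implies FM$'$, so by the contrapositive of (\ref{oth}) at least one factor, say $H_1$, has no nontrivial finite quotient, i.e.\ is aperiodic (hence infinite), and then, applying the contrapositive of (\ref{oth2}) with $H_1$ as the infinite factor (exchanging factors if needed), $H_2$ must be virtually aperiodic. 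Conversely, ``both FM, both virtually aperiodic, one aperiodic'' is exactly the hypothesis of (\ref{vafm}). Thus everything reduces to (\ref{oth}), (\ref{oth2}), (\ref{vafm}).

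For (\ref{oth}), choose nontrivial finite quotients $F_i$ of $H_i$, giving a surjection $G\twoheadrightarrow \bar G=F_1\ast F_2$; since FM$'$ passes to quotients it suffices to refute FM$'$ for $\bar G$. Now $\bar G$ is infinite and acts cocompactly on its Bass--Serre tree with finite stabilizers, hence is virtually free; a torsion-free finite-index subgroup $\bar G_0$ is then free of rank $\ge 1$, so surjects onto $\mathbf Z$. The translation action on $\mathbf Z$ is an infinite transitive action carrying an invariant mean, so $\bar G_0$ fails FM$'$; by Proposition \ref{fmfi} so does $\bar G$, and pulling back along $G\twoheadrightarrow\bar G$ shows $G$ fails FM$'$.

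The positive statement (\ref{vafm}) is where the tree structure of the free product is exploited. Let $H_2^{\circ}$ be the intersection of the (finitely many) finite-index subgroups of $H_2$; it is finite-index, aperiodic, and contained in every finite-index subgroup. Given an Eymard-amenable $G$-set $X$ with invariant mean $\mu$ and no finite $G$-orbit, FM of $H_1$ concentrates $\mu$ on the union of finite $H_1$-orbits, which by aperiodicity of $H_1$ equals the fixed set $X^{H_1}$; likewise FM of $H_2$ concentrates $\mu$ on $X^{H_2^{\circ}}$. Hence $\mu(X^{K})=1$ for $K=\langle H_1,H_2^{\circ}\rangle=H_1\ast H_2^{\circ}$, and by $G$-invariance $\mu(X^{gKg^{-1}})=1$ for every $g$, so any finite family of conjugates of $K$ has fixed sets of positive-measure intersection, hence a common fixed point. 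It remains to find finitely many conjugates of $K$ generating a finite-index subgroup. Put $N=\ker\!\big(G\to H_2\to \bar F\big)$ with $\bar F=H_2/\langle\!\langle H_2^{\circ}\rangle\!\rangle_{H_2}$ finite; a Bass--Serre count for the tree $T$ of $G$ shows that $N\backslash T$ has first Betti number $0$, since $H_1\subset N$ makes the $H_1$-type vertices form $|\bar F|$ orbits while the surjection $H_2\twoheadrightarrow\bar F$ forces the $H_2$-type vertices into a single orbit, whence $\#\text{edges}=\#\text{vertices}-1$. Thus $N$ is the free product of its vertex groups, namely finitely many conjugates of $H_1$ and one conjugate of $H_2\cap N=\langle\!\langle H_2^{\circ}\rangle\!\rangle_{H_2}$; as $H_1,H_2^{\circ}\subset K$ these all lie in finitely many conjugates of $K$, which therefore generate $N$. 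A common fixed point of their generators is fixed by $N$, giving a finite $G$-orbit, a contradiction.

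The main obstacle is (\ref{oth2}), which genuinely uses that $H_2$ has infinitely many finite-index subgroups. Here $H_1$ may be aperiodic with no infinite amenable quotient (and $H_2$ may have Property~T), so $G$ need have neither an infinite amenable quotient nor a finite-index subgroup surjecting onto $\mathbf Z$; the reduction used for (\ref{oth}) is then unavailable, and one must build an infinite transitive Eymard-amenable action directly, i.e.\ exhibit an infinite-index co-amenable subgroup $L\le G$ (equivalently, almost-invariant unit vectors in $\ell^2(G/L)$, by Lemma \ref{eyml2}). The idea is that non-virtual-aperiodicity of $H_2$ yields finite quotients of unbounded order, and one ``dilutes'' the associated finite mean-carrying $H_2$-actions across the infinitely many $H_1$-translates of the $H_2$-vertex in the Bass--Serre tree, the infiniteness of $H_1$ supplying a F\o lner direction transverse to the rigidity of $H_2$. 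Carrying out this construction and verifying amenability of the resulting transitive action is the substantive input borrowed from \cite{GM}, and is the step I expect to be hardest.
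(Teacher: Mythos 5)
Your handling of (\ref{oth}), (\ref{vafm}) and the closing equivalence is correct and is essentially the argument of the paper. For (\ref{vafm}) the paper likewise passes to the minimal finite-index subgroup $M=H_2^{\circ}$ of $H_2$, identifies the kernel $N$ of $G\to H_2/M$ via Bass--Serre theory as a free product of $[H_2:M]$ conjugates of $H_1$ and one copy of $M$, and concludes from the ``all factors aperiodic with FM'' case together with Proposition \ref{fmfi}; your reformulation through measure-one fixed-point sets of conjugates of $K=H_1\ast H_2^{\circ}$ is the same proof in different packaging. For (\ref{oth}) your virtually-free/surjection-onto-$\mathbf{Z}$ argument is if anything more careful than the paper's one-line reduction.

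The genuine gap is (\ref{oth2}): you correctly diagnose that it cannot be reduced to quotients or finite-index passage, but you then stop at a heuristic (``dilute the finite $H_2$-actions across the $H_1$-translates\dots'') and defer the construction to \cite{GM}. No Eymard-amenable action is actually exhibited, so as written this item is unproved. The construction is in fact short, and the paper gives it. First note that you may assume $H_1$ aperiodic: otherwise both factors have proper finite quotients (for $H_2$ because it is not virtually aperiodic) and (\ref{oth}) applies. Since $H_2$ is not virtually aperiodic, its finite-index subgroups have unbounded index (if the indices were bounded, the minimal finite intersection of finite-index subgroups would be a finite-index subgroup contained in all of them), so there are finite transitive $H_2$-sets $X_n$ with $\#X_n\to\infty$. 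Put $X=\bigsqcup_n X_n$ with the natural $H_2$-action, choose $Y\subset X$ meeting each $X_n$ in a single point $y_n$, let the countably infinite group $H_1$ act on $X$ by some transitive permutation action of $Y$ (e.g.\ via a bijection $Y\simeq H_1$) extended by the identity on $X\smallsetminus Y$, and let $G=H_1\ast H_2$ act accordingly. This action is transitive and $X$ is infinite. The uniform probability measures $m_n$ on $X_n$ are exactly $H_2$-invariant, while each $g\in H_1$ moves at most the points $y_n$, $g^{-1}y_n$ of $X_n$, so $\|g_*m_n-m_n\|_1\le 4/\#X_n\to 0$; hence any weak-* cluster point of $(m_n)$ is a $G$-invariant mean (equivalently, the normalized indicators $\mathbf{1}_{X_n}/\sqrt{\#X_n}$ are almost invariant unit vectors in $\ell^2(X)$ and one applies Lemma \ref{eyml2}). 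Thus $G$ fails FM$'$. With this paragraph inserted, your proposal is complete and matches the paper's proof throughout.
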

\begin{proof}[On the proof]
The statement as given in \cite{GM} is formulated with a certain property $\mathcal{F}$ which means ``virtually (aperiodic FM)"; in view of Proposition \ref{fmfi} it also means ``(virtually aperiodic) and FM"; in the second case the free product is infinite and they additionally produce a faithful action, forcing the set to be infinite.

We easily see (\ref{oth}): if both $H_1$ and $H_2$ have a nontrivial finite quotient, then $H_1\ast H_2$ has a free product of two nontrivial finite groups as a quotient and hence has a infinite virtually abelian quotient, discarding Property FM. 

Let us now justify (\ref{oth2}). Up to exchange $H_1$ and $H_2$, we can assume that $H_1$ is aperiodic and $H_2$ is not virtually aperiodic. Then (following \cite{GM}, without bothering with faithfulness), we let $(X_n)$ be a sequence of finite $H_2$-sets of increasing cardinal; consider the infinite set $X=\bigsqcup X_n$ with the natural action of $H_2$; consider a subset $Y\subset X$ intersecting $X_n$ in a singleton for each $n$, fix a transitive action of $H_1$ on $Y$ and extend it to an action of $H_1$ on $X$ by acting trivially elsewhere. This defines a transitive action of $H_1\ast H_2$ on $X$ for which the normalized constant function $f_n$ on $X_n$ is a sequence of almost invariant vectors, so that $X$ is Eymard-amenable.

Finally let us mention a proof of (\ref{vafm}). First assume that $H_1$ and $H_2$ are both aperiodic with Property FM. Then if $G$ acts on a set with an invariant mean, it follows that the mean is both supported by $H_1$-fixed points and by $H_2$-fixed points, hence by $G$-fixed points; thus there is an orbit reduced to a singleton. In general, we can suppose $H_1$ aperiodic and $H_2$ has a minimal finite index subgroup $M$, which has Property FM by Proposition \ref{fmfi}. Then the kernel of $G\to H_2/M$ is isomorphic to $M\ast H_1^{\ast G/M}$, which is a free product of $[G:M]+1$ aperiodic groups with Property FM and thus has Property FM by the previous case. So by the reverse direction of Proposition \ref{fmfi}, $G$ has Property FM as well.
\end{proof}

Let us also mention a variant of \cite[Lemma 4.5]{GM}.

\begin{prop}\label{tars}
Let $G$ be a non-amenable locally compact group in which every open subgroup of infinite index is amenable. Then $G$ has Property FM.
\end{prop}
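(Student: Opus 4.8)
The plan is to argue by contradiction, reducing to the amenability criterion provided by Lemma \ref{multia}. Let $X$ be a continuous discrete $G$-set equipped with a $G$-invariant mean, and suppose for contradiction that $X$ has no finite $G$-orbit. The goal is to show that this forces $G$ to be amenable, contrary to hypothesis.

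First I would analyze the point stabilizers. Since the action is continuous, every stabilizer $G_x$ is an open subgroup of $G$. Because $X$ has no finite orbit by assumption, each orbit $G/G_x$ is infinite, which means each $G_x$ has infinite index in $G$. By the standing hypothesis that every open subgroup of infinite index is amenable, I conclude that every point stabilizer $G_x$ is amenable.

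Next I would repackage the action. Choosing an orbit representative $x_i$ in each $G$-orbit and setting $H_i=G_{x_i}$, the $G$-set $X$ is $G$-isomorphic to the disjoint union $\bigsqcup_i G/H_i$, where each $H_i$ is an amenable open subgroup by the previous step. Since $X$ carries a $G$-invariant mean, it is Eymard-amenable. This is exactly the situation of Lemma \ref{multia}, whose conclusion yields that $G$ is amenable. This contradicts the assumption that $G$ is non-amenable, so $X$ must in fact possess a finite $G$-orbit, which is Property FM.

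I do not anticipate a serious obstacle: the argument is essentially a direct combination of the hypothesis with Lemma \ref{multia}, and the only point requiring care is verifying that the stabilizers genuinely have infinite index (which follows from the absence of finite orbits together with the identification of orbit cardinality with the index $[G:G_x]$). The crux is recognizing that local compactness and continuity of the action together guarantee that the $H_i$ are open, so that Lemma \ref{multia} applies verbatim.
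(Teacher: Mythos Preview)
Your proof is correct and follows exactly the same approach as the paper: assume no finite orbit, observe that all stabilizers are open amenable subgroups of infinite index, and invoke Lemma~\ref{multia} to reach a contradiction with non-amenability of $G$. The paper's proof is simply a terser version of what you wrote.
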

\begin{proof}
Let $X$ be a continuous discrete Eymard-amenable $G$-set. If there is no finite orbit, then we can apply Lemma \ref{multia} to deduce that $G$ is amenable, a contradiction. So there is a finite orbit, proving that $G$ has Property FM.
\end{proof}

\begin{que}\label{fgwt}
Does there exist a finitely generated non-amenable group without Property T, in which every proper subgroup is finite?
\end{que}

I expect a positive answer to Question \ref{fgwt}, which would provide an application of Proposition \ref{itars} in the discrete setting. Note that I do not require the group to have finite exponent in Question \ref{fgwt}.

\begin{rem}
There are no known examples of finitely generated groups satisfying the hypotheses of Proposition \ref{tars} and known not to have Kazhdan's Property T. On the other hand, there are many natural instances in the locally compact setting, such as $\SL_2(\mathbf{Q}_p)$ or the automorphism group of a regular tree of finite valency $\ge 3$.
\end{rem}

\begin{rem}\label{FMtau}
Say that an orthogonal representation of a locally compact group $G$ has a spectral gap if the orthogonal of the subspace of invariant vectors does not almost have invariant vectors. Property FM can be stated as: the representations $\ell^2(X)$ for $X$ continuous discrete $G$-set without finite orbits have a spectral gap. 
Bekka and Olivier \cite{BO} consider the following stronger property (*): the representations $\ell^2(X)$ for $X$ continuous discrete $G$-set have a spectral gap. Thus, as observed in \cite[Remark 16]{BO}, Property (*) is equivalent to the conjunction of Property (FM) and the well-known Property ($\tau$), which is defined as: $G$ has Property ($\tau$) if the $G$-representations $\ell^2(X)$, when $X$ range over continuous discrete $G$-sets with only finite orbits, have a spectral gap. Bekka and Olivier characterize Property (*) as an $\ell^p$-analogue of Property T, where $p$ is an arbitrary number in $\mathopen]1,\infty\mathclose[\smallsetminus\{2\}$.
\end{rem}

\section{Hilbertian methods for Property FM}\label{hilbertian}

\subsection{Reduced 1-cohomology}\label{red1c}

Let $G$ be a locally compact group, $\pi$ an orthogonal representation in a Hilbert space $\mathcal{H}_\pi$. Recall that a 1-cocycle is a continuous function $b:G\to \mathcal{H}_\pi$ such that $b(gh)=\pi(g)b(h)+b(g)$ for all $g,h\in G$. A 1-coboundary is a function of the form $g\mapsto\xi-\pi(g)\xi$; this is always a 1-cocycle. The space of $Z^1(G,\pi)$ of 1-cocycles is a Frechet space under the topology of uniform convergence on compact subsets. The subspace of 1-coboundaries is denoted by $B^1(G,\pi)$, and its closure $\overline{B^1}(G,\pi)$ consists by definition of {\em almost 1-coboundaries}. We also say that 1-cocycles $b,b'$ are {\em cohomologous} (resp.\ {\em almost cohomologous}) if $b-b'\in B^1(G,\pi)$ (resp.\ $b-b'\in\overline{B^1}(G,\pi)$). The quotient $Z^1(G,\pi)/\overline{B^1}(G,\pi)$ is denoted by $\overline{H^1}(G,\pi)$ and is called reduced first cohomology space of $\pi$.

Recall also that $\pi$ {\em almost has invariant vectors} if for every compact subset $K$ of $G$ and $\eps>0$ there exists $\xi\in\mathcal{H}_\pi$ of norm 1 such that $\|\xi-\pi(g)\xi\|\le\eps$. 

\begin{lem}[Guichardet, {\cite[Proposition 2.12.2]{BHV}}]
If the orthogonal of the invariant vectors in $\pi$ does {\em not} have almost invariant vectors then the subspace $B^1(G,\pi)$ is closed; the converse holds if $G$ is $\sigma$-compact.\qed
\end{lem}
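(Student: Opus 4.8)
The plan is to read off both directions from the behaviour of a single linear map, the coboundary map. Write $\mathcal{H}_\pi^G$ for the subspace of invariant vectors and $\mathcal{H}'=(\mathcal{H}_\pi^G)^\perp$ for its orthogonal complement, which is $\pi$-invariant; let $\pi'$ denote the restriction of $\pi$ to $\mathcal{H}'$, so that $\pi'$ has no nonzero invariant vectors. Consider the continuous linear map $T:\mathcal{H}_\pi\to Z^1(G,\pi)$ sending $\xi$ to the coboundary $b_\xi:g\mapsto \xi-\pi(g)\xi$. Its kernel is exactly $\mathcal{H}_\pi^G$, and since $b_\xi$ depends only on the component of $\xi$ in $\mathcal{H}'$, the map $T$ restricts to a continuous linear \emph{bijection} $T':\mathcal{H}'\to B^1(G,\pi)$. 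The whole lemma will come from comparing the Hilbert topology of $\mathcal{H}'$ with the topology of uniform convergence on compact sets, governed by the seminorms $p_K(b)=\sup_{g\in K}\|b(g)\|$. The key reformulation is that ``$\pi'$ does not almost have invariant vectors'' is precisely the assertion that for some compact $K$ and some $\eps>0$ one has $p_K(T'\xi)=\sup_{g\in K}\|\xi-\pi(g)\xi\|\ge\eps\|\xi\|$ for all $\xi\in\mathcal{H}'$, i.e.\ that $T'$ admits a one-sided linear bound.

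For the first implication I would assume such $K,\eps$ exist and show directly that $B^1(G,\pi)$ is closed. If $b_n=T'\xi_n$ converges in $Z^1(G,\pi)$ to some cocycle $c$, then $(b_n)$ is $p_K$-Cauchy, and the lower bound gives $\|\xi_n-\xi_m\|\le \eps^{-1}p_K(b_n-b_m)\to 0$, so $(\xi_n)$ is Cauchy in the Hilbert space $\mathcal{H}'$ and converges to some $\xi$. By continuity of $T'$ we get $b_n\to T'\xi$, whence $c=T'\xi\in B^1(G,\pi)$ by uniqueness of limits. Thus $B^1(G,\pi)$ is closed; equivalently, $T'$ is a topological embedding with complete domain, so its image is complete and hence closed.

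For the converse I would use $\sigma$-compactness to place myself in a Fréchet setting and then invoke the open mapping theorem. Choosing an exhaustion $G=\bigcup_n K_n$ by compact sets makes $Z^1(G,\pi)$ metrizable, and it is complete because the cocycle identity and continuity pass to limits that are uniform on compacts; hence $Z^1(G,\pi)$ is Fréchet. If $B^1(G,\pi)$ is closed it is therefore itself Fréchet, and $T':\mathcal{H}'\to B^1(G,\pi)$ is a continuous linear bijection of Fréchet spaces. The open mapping theorem then forces $T'^{-1}$ to be continuous, so the image under $T'$ of a basic neighbourhood $\{b:p_{K_n}(b)<\delta\}$ of $0$ in $B^1(G,\pi)$ lands in the open unit ball of $\mathcal{H}'$; by homogeneity this says exactly that $\|\xi\|\le \delta^{-1}p_{K_n}(T'\xi)$ for all $\xi\in\mathcal{H}'$, which is the spectral gap for $\pi'$ with Kazhdan pair $(K_n,\delta)$. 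I expect the main obstacle to be this converse: the forward direction is an elementary Cauchy estimate, whereas the converse genuinely needs the Fréchet structure, and it is precisely the metrizability supplied by $\sigma$-compactness that makes the open mapping theorem available -- which is the hypothesis one cannot drop.
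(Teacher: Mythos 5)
Your argument is correct and is essentially the standard proof of Guichardet's lemma that the paper simply cites from \cite[Proposition 2.12.2]{BHV} without reproducing: the forward direction via the uniform lower bound $p_K(T'\xi)\ge\eps\|\xi\|$ making $T'$ an embedding with complete image, and the converse via the open mapping theorem between the Fr\'echet spaces $\mathcal{H}'$ and the closed subspace $B^1(G,\pi)$ of $Z^1(G,\pi)$, with $\sigma$-compactness used exactly where you say, to get metrizability of $Z^1(G,\pi)$. No gaps.
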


Let us introduce another property, related to Property dFH from Definition \ref{ddfh}.

\begin{defn}
We say that a locally compact group $G$ has Property {\odFH} if for every continuous discrete $G$-set $X$, we have $\overline{H^1}(G,\ell^2(X))=0$.
\end{defn}

This property can be checked on transitive $G$-sets $X$, because the class of unitary representations with vanishing reduced first cohomology is stable under taking (possibly infinite) orthogonal direct sums (as a particular case of \cite[Lemma 3.2.4]{BHV}). Note that for a discrete group $\Gamma$, Property {\odFH} implies the vanishing of the first $\ell^2$-Betti number, which means by definition that $\overline{H^1}(\Gamma,\ell^2(\Gamma))=0$.

\begin{prop}\label{dfhodfh}
A locally compact, $\sigma$-compact group $G$ has Property dFH if and only it has all three properties \odFH, FM, and $\tau$.
\end{prop}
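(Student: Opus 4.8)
The plan is to prove the equivalence by analyzing what each condition contributes. The statement is: a locally compact $\sigma$-compact group $G$ has Property dFH if and only if it has all three of \odFH, FM, and $\tau$. The forward direction should be routine, and the reverse direction is where the real work lies; I expect the main obstacle to be assembling the three conditions into the full vanishing of $H^1$.

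For the forward implication, suppose $G$ has Property dFH, meaning $H^1(G,\ell^2(X))=0$ for every continuous discrete $G$-set $X$. First, \odFH\ follows immediately since $H^1=0$ forces $\overline{H^1}=0$ (the reduced cohomology is a quotient of the full cohomology). For Property FM: as already noted in the excerpt (\S\ref{red1c}), Property dFH implies FM for $\sigma$-compact groups by standard arguments; the idea is that if $X$ is an Eymard-amenable $G$-set with no finite orbit, then by Lemma \ref{eyml2} the representation $\ell^2(X)$ almost has invariant vectors, which for $\sigma$-compact groups (via the Guichardet lemma and the fact that $X$ has no finite $G$-orbit, so $\ell^2(X)$ has no nonzero invariant vectors) produces a nonzero class in $\overline{H^1}(G,\ell^2(X))$, and in fact in $H^1$; this contradicts dFH. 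Property $\tau$ concerns continuous discrete $G$-sets $X$ with only finite orbits, asking that $\ell^2(X)$ have a spectral gap; again dFH forces the relevant $H^1$ to vanish, and combined with the Guichardet dichotomy this yields the spectral gap.

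For the reverse implication, assume $G$ has \odFH, FM, and $\tau$; I must show $H^1(G,\ell^2(X))=0$ for every continuous discrete $G$-set $X$. From \odFH\ I already have $\overline{H^1}(G,\ell^2(X))=0$, so $Z^1 = \overline{B^1}$; it therefore suffices to show $\overline{B^1}(G,\ell^2(X)) = B^1(G,\ell^2(X))$, i.e.\ that the coboundary space is closed. By the Guichardet lemma, $B^1$ is closed precisely when the orthogonal of the invariant vectors in $\ell^2(X)$ does not almost have invariant vectors, i.e.\ when $\ell^2(X)$ has a spectral gap (in the sense of Remark \ref{FMtau}). So the entire reverse direction reduces to establishing the spectral gap for $\ell^2(X)$ for \emph{every} continuous discrete $G$-set $X$. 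This is exactly the conjunction of FM and $\tau$ as spelled out in Remark \ref{FMtau}: decompose $X = X_{\mathrm{fin}} \sqcup X_{\inf}$ into the union of finite orbits and the union of infinite orbits (both $G$-invariant). The $\tau$ condition gives a spectral gap for $\ell^2(X_{\mathrm{fin}})$, and Property FM gives a spectral gap for $\ell^2(X_{\inf})$, since $X_{\inf}$ has no finite orbit. A spectral gap for each summand of an orthogonal direct sum yields a spectral gap for the whole, so $\ell^2(X)$ has a spectral gap.

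The main obstacle, and the step I would handle most carefully, is verifying that spectral gaps combine across the decomposition $\ell^2(X) = \ell^2(X_{\mathrm{fin}}) \oplus \ell^2(X_{\inf})$ into a genuine spectral gap for the whole representation, uniformly; one must check that almost-invariant vectors for $\ell^2(X)$ (orthogonal to the invariants) cannot be manufactured by spreading mass between the two pieces, which is where the precise formulation of Remark \ref{FMtau} — that Property (*) equals FM plus $\tau$ — is invoked. With the spectral gap in hand, closedness of $B^1$ follows from Guichardet, and combined with $\overline{H^1}=0$ from \odFH\ this gives $H^1(G,\ell^2(X)) = 0$, completing the reverse direction and hence the equivalence.
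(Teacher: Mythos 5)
Your proof is correct and follows essentially the same route as the paper: the forward direction via Guichardet's lemma in the $\sigma$-compact case, and the reverse direction by combining $\overline{H^1}=0$ from \odFH\ with the spectral gap of $\ell^2(X)$ (equivalent to FM together with $\tau$, as in Remark \ref{FMtau}) to conclude that $B^1$ is closed and hence $H^1=0$. The only difference is cosmetic: you spell out the decomposition $X=X_{\mathrm{fin}}\sqcup X_{\inf}$ and the gluing of spectral gaps, which the paper delegates to Remark \ref{FMtau}.
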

\begin{proof}
If $G$ fails to have Property FM or $\tau$, then it admits a continuous discrete $G$-set $X$ such that $\ell^2(X)$ has a spectral gap (see Remark \ref{FMtau}.  Since $G$ is $\sigma$-compact, Guichardet's lemma implies that $H^1(G,\ell^2(X))\neq 0$, hence Property dFH fails. Thus the ``only if" implication holds.

Conversely if $G$ has all three properties, let $X$ be a discrete continuous $G$-set. By Property \odFH, $\bar{H^1}(G,\ell^2(X))=0$. On the other hand, $\ell^2(X)$ has a spectral gap (see Remark \ref{FMtau}) and by Guichardet's lemma, this implies that $H^1(G,\ell^2(X))$ is Hausdorff. Hence $H^1(G,\ell^2(X))=0$.
\end{proof}

Let us introduce the transitive version of Property dFH (for \odFH, we have seen that it is unnecessary).

\begin{defn}
We say that a locally compact group $G$ has Property dFH' if for every continuous discrete transitive $G$-set $X$, we have $H^1(G,\ell^2(X))=0$.
\end{defn}

\begin{prop}
Let $G$ be a locally compact group with Property dFH. Then it has Property FW; if moreover $G$ is $\sigma$-compact, then $G$ has Property FM and $\tau$. The same holds with the transitive versions dFH', FW', FM'.
\end{prop}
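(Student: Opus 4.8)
The plan is to derive each implication from the cohomological machinery already assembled, treating the three target properties (FW, FM, $\tau$) as consequences of the vanishing statements in Properties dFH and dFH'. The overarching strategy is: Property dFH gives vanishing of the \emph{unreduced} $H^1(G,\ell^2(X))$ for all continuous discrete $G$-sets $X$, and this single input feeds each conclusion through a different route.

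First I would prove Property FW. Let $X$ be a continuous discrete $G$-set and $M$ a commensurated subset; the goal is to show $M$ is transfixed. The affine isometric action of $G$ on $\ell^2_M(X)$ from \S\ref{ala} has linear part $\ell^2(X)$, so it is classified by a cocycle in $Z^1(G,\ell^2(X))$. Since $H^1(G,\ell^2(X))=0$ by Property dFH, this cocycle is a coboundary, which means the affine action has a fixed point, i.e.\ there exists $f\in\ell^2_M(X)$ fixed by $G$. Rounding $f$ (as in the proof of Proposition \ref{cafa}, taking a suitable level set) produces a $G$-invariant subset $N=f^{-1}([\alpha,\infty\mathclose[)$ commensurate to $M$, so $M$ is transfixed. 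This argument needs no $\sigma$-compactness.

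Next, assuming $G$ is $\sigma$-compact, I would deduce Properties FM and $\tau$ together. The clean way is to invoke Proposition \ref{dfhodfh}: it states that for $\sigma$-compact $G$, Property dFH is \emph{equivalent} to the conjunction of \odFH, FM, and $\tau$. In particular the ``only if'' direction of that proposition directly yields that Property dFH implies both FM and $\tau$. Alternatively, to argue FM and $\tau$ directly, one observes via Remark \ref{FMtau} and Guichardet's lemma that a failure of FM (resp.\ $\tau$) produces a continuous discrete $G$-set $X$ for which $\ell^2(X)$ lacks a spectral gap, whence $B^1(G,\ell^2(X))$ is non-closed and $H^1(G,\ell^2(X))\neq 0$, contradicting dFH; here $\sigma$-compactness is exactly what the converse half of Guichardet's lemma requires.

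Finally the transitive versions go through verbatim with ``transitive'' inserted throughout: Property dFH' gives $H^1(G,\ell^2(X))=0$ for transitive $X$, the fixed-point/rounding argument delivers Property FW' since there the $G$-set $X$ is assumed transitive, and the FM'/$\tau$ reasoning restricts the sets $X$ to transitive ones, using that $\tau$ is defined via discrete $G$-sets with only finite orbits. The main obstacle, and the only place subtlety enters, is the FM and $\tau$ step under $\sigma$-compactness: the passage from ``no spectral gap'' to ``$H^1\neq 0$'' is precisely the direction of Guichardet's lemma that can fail without $\sigma$-compactness, so I would be careful to invoke that hypothesis there and nowhere claim FM or $\tau$ in its absence.
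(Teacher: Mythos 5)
Your proof is correct and follows essentially the same route as the paper: for FW the paper argues contrapositively (a non-transfixed $M$ gives, via Corollary \ref{cabou}, an unbounded and hence non-coboundary cocycle $g\mapsto\mathbf{1}_M-g\mathbf{1}_M$ in $\ell^2(X)$), while you run the same correspondence forwards through a fixed point in $\ell^2_M(X)$ and a rounding step, and for FM and $\tau$ both proofs simply invoke Proposition \ref{dfhodfh} (equivalently, Guichardet's lemma under $\sigma$-compactness). One small caution: the transitive part of the statement asserts only that dFH' implies FW' and (under $\sigma$-compactness) FM', with no transitive analogue of Property $\tau$, so the parenthetical about $\tau$ in your last paragraph should be dropped.
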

\begin{proof}
If $G$ fails to have Property FW, then consider a continuous discrete $G$-set $X$ and a commensurated subset $M$ that is not transfixed. Then the usual cocycle $g\mapsto\mathbf{1}_M-g\cdot\mathbf{1}_M$ is unbounded in $\ell^2(X)$, hence $G$ fails to have Property dFH.

The case of Property FM is contained in Proposition \ref{dfhodfh}, and the corresponding proofs in the transitive setting (dFH' implies FW' and, under $\sigma$-compactness, implies FM') are exactly the same.
\end{proof}

\begin{prop}\label{pnarf}
Let $G$ be a compactly generated locally compact group with Property \odFH. If $X$ is a continuous discrete $G$-set with a non-transfixed commensurated subset $M$, then some infinite $G$-orbit $Y$ in $X$, such that $Y\cap M$ is non-transfixed, is Eymard-amenable. In particular, if $G$ has Property FM then it has Property FW.
\end{prop}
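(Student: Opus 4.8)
The plan is to pass to the affine $\ell^2$-cocycle attached to $M$ and let Property \odFH\ convert non-transfixedness into the presence of almost invariant vectors, which Lemma~\ref{eyml2} turns into Eymard-amenability. To $M$ I attach the $1$-cocycle $b_M(g)=\mathbf{1}_M-g\mathbf{1}_M$ for the permutation representation on $\ell^2(X)$, so that $\|b_M(g)\|_2^2=\elk_M(g)$. A coboundary is bounded, and a bounded $\ell^2$-cocycle is a coboundary (its orbit has a circumcenter, which is a fixed point of the affine action), so Corollary~\ref{cabou} gives the dictionary $M$ transfixed $\iff b_M$ bounded $\iff b_M\in B^1(G,\ell^2(X))$. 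Since $G$ has Property \odFH, $\overline{H^1}(G,\ell^2(X))=0$, i.e.\ $Z^1=\overline{B^1}$; hence $b_M\in\overline{B^1}$, and as $M$ is not transfixed, $b_M\in\overline{B^1}\smallsetminus B^1$.

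Next I record what happens on a single orbit. Any orbit $Y$ is itself a continuous discrete $G$-set, so \odFH\ yields $\overline{H^1}(G,\ell^2(Y))=0$ and $b_{M\cap Y}\in\overline{B^1}(G,\ell^2(Y))$. A finite orbit has $\elk_{M\cap Y}\le\#Y$, so $M\cap Y$ is automatically transfixed there; thus any orbit on which $M$ restricts to a non-transfixed subset is necessarily infinite. The gain of working on such an infinite orbit $Y$ is that $\ell^2(Y)$ has no nonzero $G$-invariant vector; consequently $b_{M\cap Y}\in\overline{B^1}\smallsetminus B^1$ provides coboundaries $\partial\xi_n\to b_{M\cap Y}$ with $\|\xi_n\|\to\infty$ (a bounded subsequence would weakly subconverge to a genuine primitive, forcing $b_{M\cap Y}\in B^1$), and the normalized vectors $\xi_n/\|\xi_n\|$ are unit almost invariant vectors of $\ell^2(Y)$. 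By Lemma~\ref{eyml2} the orbit $Y$ is Eymard-amenable, which is exactly the desired conclusion.

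The crux, and the step I expect to be the main obstacle, is to \emph{produce} such an orbit, i.e.\ to show that some single orbit carries a non-transfixed restriction of $M$. A priori the non-transfixedness of $M$ could be ``spread'' over infinitely many orbits $Y$ each of which restricts to a transfixed subset: choosing minimal primitives $\xi_Y\in\ell^2(Y)\ominus(\ell^2(Y))^G$ with $\partial\xi_Y=b_{M\cap Y}$, one has $M$ transfixed if and only if $\sum_Y\|\xi_Y\|_2^2<\infty$, and the divergent case would leave no single orbit witnessing non-transfixedness. Ruling this out is precisely where compact generation and the full force of \odFH\ must enter, through a uniformity (spectral-gap) argument. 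I would argue by contradiction, extracting from a putative spreading a sequence of almost invariant vectors without invariant vectors in some $\ell^2(Z)$, where $Z$ is a continuous discrete $G$-set assembled from the orbits $Y$ (compact generation guaranteeing that the resulting cochain, once $\ell^2(Z)$-valued and controlled on a compact generating set, is automatically $\ell^2(Z)$-valued at every group element via the cocycle relation), and invoking the principle that for a compactly generated group almost invariant vectors without invariant vectors force non-vanishing reduced first cohomology---contradicting \odFH. Establishing this uniformity rigorously (equivalently, isolating the property-($\tau$)-type input hidden in the conjunction of \odFH\ and compact generation, cf.\ Proposition~\ref{dfhodfh}) is the delicate point.

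Granting the existence of such a $Y$, the second paragraph finishes the main assertion. The final clause is then immediate: if $G$ had Property FM but not FW, there would be a continuous discrete $G$-set $X$ with a non-transfixed commensurated subset $M$, and the main assertion would produce an infinite transitive Eymard-amenable $G$-set $Y$, which has no finite orbit, contradicting Property FM. Hence FM implies FW.
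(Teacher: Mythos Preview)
Your first, second, and fourth paragraphs are correct and essentially reproduce the paper's argument: once an infinite orbit $Y$ with $M\cap Y$ non-transfixed is in hand, Property \odFH\ puts $b_{M\cap Y}\in\overline{B^1}\smallsetminus B^1$, and your extraction of almost invariant unit vectors is exactly Guichardet's lemma in the direction the paper uses, whence Eymard-amenability via Lemma~\ref{eyml2}.

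The genuine gap is in your third paragraph. The ``principle'' you invoke---that for a compactly generated group, almost invariant vectors without invariant vectors force $\overline{H^1}\neq 0$---is false. Guichardet only yields $B^1\neq\overline{B^1}$, which is perfectly compatible with $\overline{H^1}=0$: take $G=\mathbf{Z}$ acting regularly on $\ell^2(\mathbf{Z})$, where there are almost invariant vectors, no invariant vectors, yet $\overline{H^1}(\mathbf{Z},\ell^2(\mathbf{Z}))=0$. So the contradiction with \odFH\ you aim for never materializes, and there is no ``$\tau$-type input hidden in the conjunction of \odFH\ and compact generation'' to isolate (Proposition~\ref{dfhodfh} asserts that dFH is the conjunction of \odFH, FM and~$\tau$, not that \odFH\ implies~$\tau$).

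The paper resolves the crux without using \odFH\ at that step at all: it invokes \cite[Proposition~4.7]{CorFW}, which says that for a compactly generated locally compact group, any non-transfixed commensurated subset already restricts to a non-transfixed subset on some single orbit. This is a combinatorial fact using only compact generation. With the orbit $Y$ thus produced, your second paragraph (equivalently, the paper's combination of Guichardet's lemma and \odFH) finishes the proof.
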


\begin{proof}
Since $G$ is compactly generated, by \cite[Proposition 4.7]{CorFW}, there exists an infinite $G$-orbit $Y\subset X$ such that $Y\cap M$ is not transfixed. Let $b$ be the cocycle $b(g)=\mathbf{1}_M-\mathbf{1}_{gM}$, which is by assumption unbounded (in view of Corollary \ref{cabou}). If by contradiction $Y$ is not Eymard-amenable, then the representation of $G$ on $\ell^2(Y)$ does not almost have invariant vectors (Lemma \ref{eyml2}). By Guichardet's lemma, it follows that $H^1(G,\ell^2(X))$ is Hausdorff. On the other hand, by Proposition \ref{narf0}, $\overline{H^1}(G,\ell^2(X))=0$. Hence $H^1(G,\ell^2(X))=0$, contradicting the unboundedness of $b$.
\end{proof}

\subsection{Superrigidity of square-integrable lattices}

We need to recall the notion of {\em square-integrable} lattice. Let $G$ be a compactly generated locally compact group (endowed with a left Haar measure) and $\Gamma$ a lattice (thus $\Gamma$ is countable). Choose a measurable fundamental domain $D$ of finite measure, so that $G$ is the disjoint union $\bigsqcup_{\gamma\in\Gamma}D\gamma$. Let $\alpha_D:G\times D\to\Gamma$ be the associated cocycle, defined by: $\alpha_D(g,x)=\gamma$ if $gx\gamma\in D$; then $\alpha_D$ is measurable. 

\begin{defn}
The lattice $\Gamma$ is called {\em square-integrable} if it is finitely generated, and for some choice $D$ of measurable fundamental domain $D$, we have:
denoting by $\ell$ the word length on $\Gamma$ with respect to some finite generating subset, we have
\[\int_D\ell(\alpha_D(g,x))^2dx<\infty,\quad\forall g\in G.\]
(This does not depend on the choice of $\ell$.)
\end{defn}

\begin{exe}\label{exint}~
\begin{enumerate}
\item Clearly, cocompact lattices are square-integrable since $D$ can be chosen to be bounded.
\item\label{exint2} All lattices in simple Lie groups of rank 1, finite center and\footnote{(Addendum April 2020) In the published version, the square-integrability of lattices in rank~1 simple Lie groups $G$ was asserted, without excluding the case when $\dim(G)\le 6$, that is, $G$ locally isomorphic to $\mathrm{SO}(2,1)$ (3-dimensional) or $\mathrm{SO}(3,1)$ (6-dimensional). But in 3-dimensional simple Lie groups, non-cocompact lattices are not square-integrable, as follows from Lemma 5.4 in: [U.\ Bader, A.\ Furman, R.\ Sauer, {\it Integrable measure equivalence and rigidity of hyperbolic lattices.} Invent. Math. 194 (2013), no. 2, 313--379]. The misquotation was due to the ambiguous hypothesis {\it ``any other rank one simple Lie group"} of \cite[Th.\ 3.7]{Shann}, but indeed checking the proof in \cite{Shann} all cases are checked except precisely that of simple Lie groups locally isomorphic to $\mathrm{SO}(2,1)$ and $\mathrm{SO}(3,1)$. I thank U.\ Bader for pointing out the mistake and the above reference. The mistake has no consequence in this paper since this very result is only stated for information purposes (unlike Example \ref{exint}(\ref{exint4}), which is implicitly used when saying, before stating Corollary \ref{cowm} below, that Theorem \ref{iccj} follows).} dimension $>6$ are square-integrable \cite[Theorems 3.6 and 3.7]{Shann}.
\item ``Twin building Kac-Moody" lattices are square-integrable \cite[Theorem 31]{CR}.
\item\label{exint4} Let $n\ge 2$ and $G=G_1\dots G_n$ be a semisimple group of algebraic type (as in Definition \ref{algty}), and let $\Gamma$ be an irreducible lattice, in the sense that $G_i\Gamma$ is dense for all $i$ (i.e.\ the projection of $\Gamma$ in $G/G_i$ is dense for all $i$). Then $\Gamma$ is square-integrable: this essentially follows from results of Margulis \cite[Chap. VIII, Proposition 1.2]{Mar} and the non-distortion of these lattices, due to Lubotzky, Mozes and Raghunathan \cite{LMR}; see the discussion in \cite[\S 2]{Sha}.
\end{enumerate}
\end{exe}

The following is a restatement of Shalom's superrigidity of reduced 1-cohom\-ology theorem \cite[Theorem 4.1]{Sha} using the convenient notion of $Q$-points from \S\ref{uq} (here $Q=G_i$ and they will be called $G_i$-vectors since the action is linear). 

\begin{thm}[Shalom]\label{sha}
Let $G$ be a compactly generated locally compact group given as a product $G=G_1\times\dots \times G_n$ (where $n\ge 2$) and let $\Gamma$ be an irreducible square-integrable lattice.

Let $\pi$ be an orthogonal Hilbertian representation of $\Gamma$ and $b\in\mathbf{Z}^1(\Gamma,\pi)$ a cocycle. Let $\pi_i$ be the subrepresentation of $G_i$-vectors. Then there exist $b_i\in Z^1(G_i,\pi_i)$ ($i=1,\dots,n$) such that, writing $b'_i=b_i|_\Gamma$, the cocycle $b$ is almost cohomologous to $\sum b'_i$. In particular, if $\pi$ does not have almost $\Gamma$-invariant vectors, then $b$ and $b'$ are cohomologous.
\end{thm}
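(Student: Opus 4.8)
The plan is to deduce the statement directly from Shalom's reduced $1$-cohomology superrigidity theorem \cite[Theorem 4.1]{Sha}; the only genuinely new content is the identification of the factor subrepresentations occurring there with the subrepresentations $\pi_i$ of $G_i$-vectors defined through the $Q$-points formalism of \S\ref{uq}. I would first record that the irreducibility hypothesis makes the projection $p_i\colon\Gamma\to G_i$ have dense image: for $n=2$ this is immediate, since density of $G_1\Gamma$ means $p_2(\Gamma)$ is dense and symmetrically for $p_1$, and in general it is part of the standard structure theory of irreducible square-integrable lattices. Thus the machinery of \S\ref{uq} applies verbatim with $H=\Gamma$, $Q=G_i$ and the homomorphism $p_i$, and the subrepresentation $\pi_i$ of $G_i$-vectors is, by Proposition \ref{cext}, exactly the largest $\Gamma$-subrepresentation of $\pi$ whose action factors through a continuous $G_i$-representation.

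For context, and taking its analytic core as given, Shalom's theorem is proved as follows. Square-integrability is precisely the hypothesis needed to induce a cocycle $b\in Z^1(\Gamma,\pi)$ to an honest square-integrable cocycle on $G$ for the induced representation; one then replaces $b$ by a harmonic representative of its reduced class and exploits the product structure $G=G_1\times\dots\times G_n$. A mixing/Mautner-type argument shows that a harmonic cocycle on a direct product splits as a sum $\sum_i\tilde b_i$ in which $\tilde b_i$ is invariant under $\prod_{j\ne i}G_j$, hence factors as a continuous cocycle on the single factor $G_i$. Restricting each $\tilde b_i$ to $\Gamma$ along $p_i$ produces cocycles whose sum is almost cohomologous to $b$.

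The translation step is then routine but must be carried out with care. Each $\tilde b_i$ is a continuous cocycle for a continuous $G_i$-representation, so every orbit map is continuous in the $G_i$-direction; consequently the associated subrepresentation is, by Proposition \ref{cext}, contained in the $G_i$-vectors $\mathcal{H}^{G_i}=\mathcal{H}_{\pi_i}$. Hence one may set $b_i:=\tilde b_i\in Z^1(G_i,\pi_i)$ and $b'_i=b_i\circ p_i=b_i|_\Gamma$; conversely any element of $Z^1(G_i,\pi_i)$ restricts to $\Gamma$ by precomposition with $p_i$ (cf.\ the well-definedness discussion in Corollary \ref{surjh1}). This matches Shalom's decomposition with the statement to be proved.

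Finally, for the ``in particular'' clause: if $\pi$ has no almost $\Gamma$-invariant vectors then it has no nonzero invariant vectors, so the orthogonal complement of the invariants is all of $\mathcal{H}_\pi$ and has no almost invariant vectors; by Guichardet's lemma $B^1(\Gamma,\pi)$ is closed, whence $\overline{B^1}(\Gamma,\pi)=B^1(\Gamma,\pi)$ and ``almost cohomologous'' upgrades to ``cohomologous.'' The main obstacle is precisely the identification of subrepresentations: one must verify that a $G_i$-extendable cocycle is necessarily valued in $\mathcal{H}^{G_i}$ and that no information is lost in passing between Shalom's formulation and the $Q$-points language. The deep analytic ingredients, namely the induction of square-integrable cocycles, the existence of harmonic representatives, and the product splitting, are Shalom's and are used here as a black box.
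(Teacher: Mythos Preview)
The paper does not actually prove this theorem: it is presented as ``a restatement of Shalom's superrigidity of reduced 1-cohomology theorem \cite[Theorem 4.1]{Sha} using the convenient notion of $Q$-points from \S\ref{uq},'' with no further argument. Your proposal is therefore not really comparable to a proof in the paper; rather, you are supplying precisely the translation step that the paper leaves implicit, together with a sketch of Shalom's argument itself.

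Your translation is correct in spirit and in detail: the key point is indeed that a continuous $G_i$-cocycle takes values in the subspace of $G_i$-vectors (in the sense of Definition~\ref{xqq}), and Proposition~\ref{cext} is the right tool for this identification. Your derivation of the ``in particular'' clause via Guichardet's lemma is also exactly right. One small clarification worth making explicit: for general $n\ge 2$, the density of each single projection $p_i(\Gamma)\subset G_i$ follows because irreducibility gives density of the projection to $G/G_j$ for any $j\neq i$, and then one projects further onto the factor $G_i$; your phrase ``standard structure theory'' is adequate but this one-line argument is all that is needed.
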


\begin{prop}\label{narf0}
Let $G$ be a compactly generated locally compact group given as a topological almost direct product (in the sense of Definition \ref{algty}) $G=G_1\times\dots \times G_n$ (where $n\ge 2$) and let $\Gamma$ be an irreducible square-integrable lattice. If each $G_i$ has Property \odFH, then so does $\Gamma$.
\end{prop}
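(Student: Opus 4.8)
The plan is to reduce, using the stability of vanishing reduced cohomology under orthogonal direct sums noted just after the definition of Property \odFH, to showing that $\overline{H^1}(\Gamma,\ell^2(X))=0$ for a single continuous discrete $\Gamma$-set $X$; write $\pi$ for the orthogonal representation of $\Gamma$ on $\ell^2(X)$ and fix $b\in Z^1(\Gamma,\pi)$, the aim being $b\in\overline{B^1}(\Gamma,\pi)$. Since $n\ge 2$ and $\Gamma$ is irreducible, for each $i$ one can choose some $j\ne i$; the projection of $\Gamma$ to $G/G_j\cong\prod_{k\ne j}G_k$ is dense, and composing with the coordinate projection onto $G_i$ shows that $\Gamma\to G_i$ has dense image, so the notion of $G_i$-vectors (the $Q$-points of \S\ref{uq} with $Q=G_i$) is available. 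The almost direct product $G=G_1\cdots G_n$ of Definition \ref{algty} I would handle by passing to the finite central cover $\tilde G=G_1\times\dots\times G_n$ and its preimage lattice, where Theorem \ref{sha} applies verbatim; the finite kernel does not affect reduced cohomology.

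The core step would then be to apply Shalom's Theorem \ref{sha}: there exist $b_i\in Z^1(G_i,\pi_i)$, where $\pi_i=\ell^2(X)^{G_i}$ is the subrepresentation of $G_i$-vectors, such that $b$ is almost cohomologous to $\sum_i b_i'$ with $b_i'=b_i|_\Gamma$. The decisive observation is that Proposition \ref{linac2} (with $Q=G_i$) identifies $\pi_i$, $G_i$-equivariantly and isometrically, with $\ell^2(X^{[G_i]})$, where $X^{[G_i]}$ is the $G_i$-reduction of $X$; by Proposition \ref{linearxq} this $X^{[G_i]}$ is a continuous discrete $G_i$-set and the $G_i$-action on $\pi_i$ produced by Proposition \ref{cext} is exactly its natural representation. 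Thus $\pi_i$ is the $\ell^2$-space of a continuous discrete $G_i$-set, so Property \odFH of $G_i$ yields $\overline{H^1}(G_i,\pi_i)=0$, i.e.\ $b_i\in\overline{B^1}(G_i,\pi_i)$.

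It remains to transfer this back to $\Gamma$ and reassemble the factors. Writing $b_i=\lim_m(g\mapsto\xi_m-\pi_i(g)\xi_m)$ uniformly on compact subsets of $G_i$, one restricts along the dense homomorphism $\Gamma\to G_i$; since $\pi_i(\bar\gamma)=\pi(\gamma)$ for $\gamma\in\Gamma$ and the image of any finite subset of $\Gamma$ is compact in $G_i$, one gets $b_i'=\lim_m(\gamma\mapsto\xi_m-\pi(\gamma)\xi_m)$ uniformly on finite subsets of $\Gamma$, and because $\xi_m\in\pi_i\subset\pi$ this means $b_i'\in\overline{B^1}(\Gamma,\pi)$. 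As $\overline{B^1}(\Gamma,\pi)$ is a closed linear subspace, $\sum_i b_i'\in\overline{B^1}(\Gamma,\pi)$, whence $b=(b-\sum_i b_i')+\sum_i b_i'\in\overline{B^1}(\Gamma,\pi)$, giving $\overline{H^1}(\Gamma,\pi)=0$.

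The main obstacle is the identification of the space of $G_i$-vectors $\pi_i$ as the $\ell^2$-space of a \emph{discrete} $G_i$-set, which is what allows Property \odFH of $G_i$ to be invoked at all; this is precisely the content supplied by the $Q$-point computations of \S\ref{qlpx}, in particular Proposition \ref{linac2}. By contrast, Shalom's superrigidity theorem is used as a black box, and the restriction and summation steps are routine once one is careful to keep the approximating coboundary vectors $\xi_m$ inside $\pi_i$.
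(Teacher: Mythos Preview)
Your proof is correct and follows essentially the same route as the paper's: reduce to a direct product, apply Shalom's Theorem~\ref{sha}, use Proposition~\ref{linac2} to identify $\ell^2(X)^{G_i}$ with $\ell^2(X^{[G_i]})$ so that Property \odFH\ of $G_i$ kills each $b_i$ in reduced cohomology, and conclude. You supply more detail than the paper on two points it leaves implicit---the density of $\Gamma\to G_i$ needed for the $Q$-point formalism, and the restriction argument showing $b_i\in\overline{B^1}(G_i,\pi_i)\Rightarrow b_i'\in\overline{B^1}(\Gamma,\pi)$---but the structure is identical.
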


\begin{proof}
Pulling back if necessary, we can suppose that the product is direct.

Let $\pi$ be a unitary representation of $\Gamma$ and consider $b\in Z^1(\Gamma,\pi)$. By Theorem \ref{sha}, $b$ is almost cohomologous to a cocycle of the form $\sum b'_i$, with $b'_i=b_i|_\Gamma$ and $b_i\in Z^1(G_i,\ell^2(X)^{G_i})$. By Proposition \ref{linac2}, we have $H^1(G_i,\ell^2(X^{[G_i]}))=H^1(G_i,\ell^2(X)^{G_i})$. By Property {\odFH} for $G_i$, we obtain that $b_i$ is almost cohomologous to zero (as a cocycle on $G_i$), and hence $b'_i$ is almost cohomologous to zero as well, and thus $b$ is cohomologous to zero.
\end{proof}

Using Proposition \ref{pnarf}, we deduce the following corollary, which concludes the proof of Theorem \ref{iccj}.

\begin{cor}\label{cowm}
If $\Gamma$ is as in Proposition \ref{narf0}, then Property FM for $\Gamma$ implies Property FW for $\Gamma$.\qed
\end{cor}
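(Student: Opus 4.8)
The plan is to combine the two preceding results almost mechanically, since the corollary is stated precisely so that Proposition~\ref{pnarf} and Proposition~\ref{narf0} can be chained together. First I would observe that $\Gamma$ is a finitely generated (hence compactly generated, as a discrete group) group: this is part of the hypothesis of being an irreducible square-integrable lattice. This is the crucial hypothesis needed to invoke Proposition~\ref{pnarf}, whose statement requires the group to be compactly generated.

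Next I would verify that $\Gamma$ has Property \odFH. This is exactly the content of Proposition~\ref{narf0}: since each factor $G_i$ is a semisimple group of algebraic type, the standing assumption (implicit from the ambient setup, or to be recalled) is that each $G_i$ has Property \odFH, and therefore the irreducible square-integrable lattice $\Gamma$ inherits Property \odFH. Thus $\Gamma$ is a compactly generated locally compact group with Property \odFH, which is precisely the hypothesis of Proposition~\ref{pnarf}.

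Finally I would apply the last sentence of Proposition~\ref{pnarf} directly: for a compactly generated group with Property \odFH, Property FM implies Property FW. Since $\Gamma$ satisfies all these conditions, the implication \textbf{Property FM for $\Gamma$} $\Rightarrow$ \textbf{Property FW for $\Gamma$} follows immediately. The short \verb|\qed| attached to the statement in the excerpt signals that the author intends exactly this kind of one-line deduction, with no additional argument required.

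I do not anticipate any serious obstacle here, since the work has all been front-loaded into the two cited propositions. The only point meriting a moment's care is checking that the hypotheses on $G$ and $\Gamma$ in Proposition~\ref{narf0} (a compactly generated locally compact group that is a topological almost direct product $G=G_1\times\dots\times G_n$ with $n\ge 2$, each $G_i$ having Property \odFH, and $\Gamma$ an irreducible square-integrable lattice) match those already assumed for the $\Gamma$ in the corollary, so that Property \odFH is genuinely available for $\Gamma$. Once that bookkeeping is confirmed, the conclusion is immediate from the final sentence of Proposition~\ref{pnarf}.
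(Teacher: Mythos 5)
Your proposal is correct and is exactly the paper's intended argument: the corollary is stated with a \verb|\qed| because it follows immediately by combining Proposition~\ref{narf0} (which gives $\Gamma$ Property {\odFH}) with the last sentence of Proposition~\ref{pnarf}, and you correctly note that compact generation of $\Gamma$ comes from the finite generation built into the definition of a square-integrable lattice. The only cosmetic slip is the phrase suggesting that Property {\odFH} of the $G_i$ comes from their being ``semisimple of algebraic type''---in Proposition~\ref{narf0} it is simply an explicit hypothesis---but you recover this correctly in your final bookkeeping paragraph.
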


Here is now the analogue of Theorem \ref{fwres} for Properties FM and dFH.

\begin{thm}\label{fmres}
Let $\pi:H\to Q$ be a homomorphism with dense image between locally compact groups. Suppose that $\pi$ is a Kazhdan homomorphism (\S\ref{kh}). Let $(P)$ be one of the properties: dFH, \odFH, FM, FM'. Then $H$ has Property (P) if and only $Q$ has Property~(P). 
\end{thm}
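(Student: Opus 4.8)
The plan is to mirror the structure of the proof of Theorem \ref{fwres}, handling each of the four properties in turn. The forward implications (``$H$ has (P) $\Rightarrow$ $Q$ has (P)'') should be the easy direction in each case, since $\pi$ has dense image: for FM and FM' this is exactly Fact \ref{squot}, and for dFH and \odFH\ one pulls back a continuous discrete $Q$-set to $H$ via $\pi$, observes that the cohomology of $Q$ injects into (indeed, by density, is controlled by) the cohomology of $H$. The substantive content is the reverse direction, where I would use the Kazhdan hypothesis together with the $Q$-point machinery developed in Sections 2 and 3.

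For the reverse direction, suppose $Q$ has (P) and let $X$ be a continuous discrete $H$-set, with a commensurated $M$ or an invariant mean as appropriate. The key structural input is that $\pi$ being Kazhdan forces the existence of $Q$-points: by Theorem \ref{afr}, for any continuous affine isometric action of $H$ on a Hilbert space we have $V^Q\neq\emptyset$. First I would treat dFH. Given a cocycle $b\in Z^1(H,\ell^2(X))$, apply Theorem \ref{afr} to the corresponding affine action to get a $Q$-point, so $b$ is cohomologous to a cocycle $c\circ\pi$ with $c\in Z^1(Q,\ell^2(X)^Q)$ (this is exactly the mechanism of Corollary \ref{surjh1}). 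By Proposition \ref{linac2}, $\ell^2(X)^Q\simeq\ell^2(X^{[Q]})$ as $Q$-representations, where $X^{[Q]}$ is the continuous discrete $Q$-set from Proposition \ref{linearxq}. Property dFH for $Q$ gives $H^1(Q,\ell^2(X^{[Q]}))=0$, so $c$ is a coboundary, hence $b$ is a coboundary, giving dFH for $H$. The case \odFH\ is identical but works with reduced cohomology, using the surjectivity of $\overline{H^1}(Q,u^Q)\to\overline{H^1}(H,u)$ from Corollary \ref{surjh1}.

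For Property FM (and FM'), the argument is more delicate because FM is not a cohomological vanishing statement but an assertion about invariant means. Here I would combine Eymard's Lemma \ref{eyml2} with the $Q$-point analysis. Given a continuous discrete $H$-set $X$ that is Eymard-amenable without finite $H$-orbits, the representation $\ell^2(X)$ almost has invariant vectors as an $H$-representation (Lemma \ref{eyml2}). The Kazhdan property then yields $\mathbf{1}_Q\prec (\ell^2(X))^Q\simeq\ell^2(X^{[Q]})$, so $X^{[Q]}$ is Eymard-amenable as a continuous discrete $Q$-set (again by Lemma \ref{eyml2}, the $\Leftarrow$ direction). Applying FM for $Q$ produces a finite $Q$-orbit in $X^{[Q]}$; pulling this back through $\rho:X^{\langle Q\rangle}\to X^{[Q]}$ — whose fibers are finite — yields a finite $H$-invariant subset of $X$, hence a finite $H$-orbit. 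For FM' I would restrict to transitive $X$, using the remark after Proposition \ref{linearxq} that a transitive $X$ is either $Q$-compatible (with $\rho$ having fibers of constant finite cardinal) or $Q$-incompatible (with $X^{[Q]}=\emptyset$); the incompatible case forces $(\ell^2(X))^Q=0$ by Proposition \ref{linac2}, contradicting $\mathbf{1}_Q\prec(\ell^2(X))^Q$.

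\textbf{The main obstacle} I anticipate is the $Q$-incompatibility subtlety in the FM direction: one must rule out that $X$ is entirely $Q$-incompatible, which is where the \emph{strong} conclusion $\mathbf{1}_Q\prec u^Q$ in the definition of Kazhdan homomorphism (rather than merely $u^Q\neq 0$) earns its keep, since it guarantees $(\ell^2(X))^Q\neq 0$ and indeed carries the almost-invariant-vectors structure across. A secondary technical point is verifying that Eymard's equivalence (Lemma \ref{eyml2}) transfers cleanly between the $H$-representation $\ell^2(X)$ and the $Q$-representation $\ell^2(X^{[Q]})$ via the isometric $Q$-equivariant isomorphism of Proposition \ref{linac2}; I expect this to be routine once the identification $(\ell^2(X))^Q\simeq\ell^2(X^{[Q]})$ is in hand, but it is the step that genuinely uses continuity of the $Q$-action established in Proposition \ref{cext}.
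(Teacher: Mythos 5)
Your proposal is correct and follows essentially the same route as the paper: the dFH/\odFH\ cases via the surjectivity of the restriction maps (Corollary \ref{surjh1} plus Proposition \ref{linac2}), and the FM/FM' cases via Eymard's lemma, the identification $\ell^2(X)^Q\simeq\ell^2(X^{[Q]})$, and pulling a finite $Q$-orbit back through the finite fibers of $\rho$. The only (harmless) organizational difference is that the paper first isolates the claim that $X^{\rangle Q\langle}$ is not Eymard-amenable and then pushes the mean forward to $X^{[Q]}$, whereas you transfer almost-invariant vectors directly through $\ell^2(X)^Q$; both hinge on the same use of $\mathbf{1}_Q\prec u^Q$.
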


\begin{proof}
In each case, Property (P) for $H$ obviously implies Property (P) for $Q$.

Let $X$ be a continuous discrete $H$-set. Corollary \ref{surjh1}, combined with Proposition \ref{linac2}, establishes the surjectivity of the restriction maps $H^1(Q,\ell^2(X^{[Q]})\to H^1(H,\ell^2(X))$ and $\overline{H^1}(Q,\ell^2(X^{[Q]})\to \overline{H^1}(H,\ell^2(X))$. This implies the converse when (P) is either dFH or \odFH.

To deal with Properties FM and FM', let us first check the following claim: if $X$ be a continuous discrete $H$-set, then 
$Z=X^{\rangle Q\langle}$ is not Eymard-amenable. Indeed, otherwise $\ell^2(Z)$ almost has $H$-invariant vectors, since $\pi$ is Kazhdan, this implies that $\ell^2(Z)^Q\neq 0$, but actually $\ell^2(Z)^Q=0$ by Proposition \ref{linac2}, a contradiction.

Now assume that $Q$ has Property FM. Let $X$ be an Eymard-amenable continuous discrete $H$-set. It follows from the above claim that $X^{\langle Q\rangle}$ is Eymard-amenable. It follows that $X^{[Q]}$ is Eymard-amenable, that is, $\ell^2(X^{[Q]})$ almost has invariant vectors as $H$-representation. Noting that this orthogonal $H$-representation factors through $Q$ and using that $\pi$ is Kazhdan, it almost has invariant vectors as $Q$-representation. So $X^{[Q]}$ is an Eymard-amenable $Q$-set. By Property FM for $Q$, it follows that $X^{[Q]}$ has a finite orbit. Since the fibers of $X^{\langle Q\rangle}\to X^{[Q]}$ are finite, it follows that $X^{\langle Q\rangle}$ has a finite orbit. So $H$ has Property FM.

If $Q$ has Property FM' and $X$ is in addition $H$-transitive, the same argument shows that $X^{\rangle Q\langle}=\emptyset$ and $X^{[Q]}$ is finite, thus $X$ is finite and $H$ has Property FM'.
\end{proof}

\begin{prop}\label{ssfm}
If $G$ is semisimple of algebraic type, then it has Property FM; if moreover, no simple factor is non-Archimedean of rank~1, then it has Property dFH.
\end{prop}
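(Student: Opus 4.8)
The plan is to prove the two assertions of Proposition \ref{ssfm} separately, using the almost direct product structure $G=G_1\cdots G_n$ and handling each factor via the results already established. For the first assertion (Property FM), I would reduce to the case of a single factor, i.e.\ $G=G(\mathbf{K})$ for an almost $\mathbf{K}$-simple, $\mathbf{K}$-isotropic semisimple linear algebraic group over a non-discrete locally compact field $\mathbf{K}$. Indeed, Proposition \ref{fmext} shows that Property FM passes to extensions, so if each $G_i$ has Property FM, then the direct product $G_1\times\dots\times G_n$ does too, and Fact \ref{squot} (dense images) transports this to the almost direct product $G$ via the proper surjection $G_1\times\dots\times G_n\to G$. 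So everything comes down to the simple case.

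For a single factor $G=G(\mathbf{K})$, I would invoke Proposition \ref{itars} (= Proposition \ref{tars}): a non-amenable locally compact group in which every open subgroup of infinite index is amenable has Property FM. The group $G(\mathbf{K})$ is non-amenable since it is semisimple and $\mathbf{K}$-isotropic. The key input I would use is the \emph{Howe--Moore property}, which the excerpt explicitly flags as the intended tool: for such groups, every matrix coefficient of a unitary representation without nonzero invariant vectors vanishes at infinity. The standard consequence is that any closed non-cocompact subgroup is ``small'' in a way that forces open subgroups of infinite index to be amenable; more precisely, an open subgroup of infinite index cannot contain a non-amenable part, because Howe--Moore together with the structure of parabolic/amenable subgroups shows that a closed subgroup $H\le G$ with $G/H$ infinite must be amenable (essentially, proper closed subgroups of infinite covolume are contained in proper parabolics, which are amenable as extensions of reductive-by-unipotent data—or one argues directly that a non-amenable closed subgroup acts on $G/H$ with a spectral gap contradicting amenability of the action). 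Granting that every open subgroup of infinite index of $G(\mathbf{K})$ is amenable, Proposition \ref{itars} immediately yields Property FM for each $G_i$, and hence for $G$ by the reduction above.

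For the second assertion (Property dFH under the hypothesis that no simple factor is non-Archimedean of $\mathbf{K}_i$-rank $1$), I would argue through Property {\odFH}. By Proposition \ref{dfhodfh}, for $\sigma$-compact groups Property dFH is equivalent to the conjunction of {\odFH}, FM and $\tau$. Property FM is the first assertion; Property $\tau$ for these semisimple groups is classical. So the content is Property {\odFH}, i.e.\ the vanishing of reduced $\ell^2$-cohomology $\overline{H^1}(G,\ell^2(X))=0$ for all continuous discrete $G$-sets $X$. Since this property is stable under direct sums and passes through the almost direct product (via Kazhdan-type transfer or direct sum decomposition of the representation), I would reduce again to a single simple factor $G_i=G(\mathbf{K}_i)$. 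Here the dichotomy in rank enters: if $G_i$ has Property T (higher rank, or rank $1$ in the appropriate cases) then {\odFH} is immediate since Property T forces vanishing of all reduced first cohomology; the only delicate case is a rank-$1$ factor with the Haagerup property, namely the Archimedean rank-$1$ groups $\SO(n,1)$ and $\SU(n,1)$—and these are precisely the factors allowed by the hypothesis (we exclude non-Archimedean rank $1$, which act on trees and genuinely fail {\odFH}). For these Archimedean rank-$1$ groups, vanishing of $\overline{H^1}(G,\pi)$ for representations weakly contained in the regular-type representations arising from $\ell^2(X)$ follows from known reduced-cohomology vanishing results (Shalom's and related computations), so {\odFH} holds.

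The main obstacle I expect is the second assertion, specifically establishing Property {\odFH} for the Archimedean rank-$1$ factors $\SO(n,1)$ and $\SU(n,1)$: unlike the Property-T factors, these do \emph{not} kill reduced first cohomology in general, and one must verify that the \emph{particular} representations $\ell^2(X)$ coming from discrete $G$-sets (equivalently, quasi-regular representations on coset spaces $G/H$ with $H$ open) have vanishing reduced first cohomology. This requires the finer cohomology-vanishing theory (controlling $\overline{H^1}$ for representations that almost have invariant vectors versus those that do not, and using that discrete $G$-sets give rise to representations induced from open—hence cocompact-by-finite-type—subgroups), and is exactly where the rank-$1$ Archimedean versus non-Archimedean distinction becomes essential. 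By contrast, the first assertion (Property FM) should be routine once the Howe--Moore-based amenability of infinite-index open subgroups is in hand.
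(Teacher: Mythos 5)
Your proof of the first assertion is essentially the paper's: reduce to the almost simple case by Proposition \ref{fmext}, then use Howe--Moore. One caution: your intermediate claim that a closed subgroup $H\le G$ with $G/H$ infinite must be amenable is false in general (e.g.\ $\SL_2(\mathbf{R})$ embedded in a corner of $\SL_3(\mathbf{R})$); what Howe--Moore actually gives, and what the paper uses, is the dichotomy for \emph{open} subgroups: an open subgroup is either compact or cocompact, and an open cocompact subgroup has finite index since $G/H$ is then discrete and compact. So the stabilizers of an Eymard-amenable continuous discrete $G$-set with no finite orbit are all compact, hence amenable, and Lemma \ref{multia} gives the contradiction. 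With that correction your FM argument goes through and matches the paper.

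The second assertion is where your proposal has a genuine gap, and you flag it yourself: you never establish Property {\odFH} for the Archimedean rank-$1$ factors $\textnormal{SO}(n,1)$ and $\textnormal{SU}(n,1)$, and an appeal to generic ``reduced-cohomology vanishing results'' cannot work as stated, since these groups do have unitary representations with nonvanishing $\overline{H^1}$. The missing observation, which makes the whole {\odFH}/FM/$\tau$ detour unnecessary, is that Property dFH only concerns representations of the form $\ell^2(X)$ for $X$ a \emph{continuous discrete} $G$-set, and a connected group has no proper open subgroup, so $G^\circ$ acts trivially on any such $X$. Setting $N=G^\circ\cap\overline{[G,G]}$, any cocycle $b\in Z^1(G,\ell^2(X))$ restricts on $N$ to a continuous homomorphism $N\to\ell^2(X)$, which vanishes because $N$ has no nontrivial abelian Hausdorff quotient; hence $b$ factors through $G/N$ and $H^1(G,\ell^2(X))=H^1(G/N,\ell^2(X))$. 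This reduces the statement to a totally disconnected group whose simple factors all have rank at least $2$ by hypothesis, hence Property T, hence Property dFH. Your worry about the Archimedean rank-$1$ factors simply evaporates: the representations in question are trivial on them. (Your reduction of dFH to individual factors ``via Kazhdan-type transfer or direct sum decomposition'' is also left unjustified; the paper's quotient argument avoids it entirely, after which Property T of the totally disconnected quotient does all the work.)
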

\begin{proof}
We start by the second statement.
It is clear that $G$ has Property dFH if and only if $G/(G^\circ\cap\overline{[G,G]})$ has Property dFH. Hence we can suppose $G$ totally disconnected; hence by assumption it has no simple factor of rank~1 and hence has Property~T, which implies Property dFH.

Let us now prove the first assertion.
By Proposition \ref{fmext}, we can deal with the case when $G$ is almost simple (and non-compact). 
Let $X$ be a discrete Eymard-amenable $G$-set.
By the Howe-Moore Property \cite[Theorem 5.1]{HowMoo}, every open subgroup of $G$ is either compact or cocompact, so every stabilizer is either compact or cocompact. If some stabilizer is cocompact, then there is a finite orbit and we are done. Otherwise, all stabilizers are compact and in particular are amenable. By Lemma \ref{multia}, $G$ is amenable, a contradiction.
\end{proof}

Theorem \ref{ipart} follows from the following more general result:

\begin{thm}\label{somtfm}
Let $G=G_1\dots G_n$ be of algebraic type, where $n\ge 2$, and let $\Gamma$ be an irreducible lattice. Suppose that some $G_i$ has Property T. Then
\begin{itemize}
\item $\Gamma$ has Property FM
\item if moreover no $G_j$ is non-Archimedean of rank~1, then $\Gamma$ has Property dFH (and hence FW).
\end{itemize}
\end{thm}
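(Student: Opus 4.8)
The plan is to reduce both assertions of Theorem~\ref{somtfm} to the corresponding properties of the ambient group $G$ via a Kazhdan homomorphism, exactly as was done for Property FW in the proof of Theorem~\ref{ipart}. Write $G=G_1\dots G_n$ with, say, $G_1$ having Property~T. Pulling back through the finite covering $G_1\times\dots\times G_n\to G$ if necessary, I may assume the product is direct. The key structural input is that the projection $p\colon\Gamma\to G/G_1=G_2\dots G_n=:Q$ has dense image (this is precisely the irreducibility hypothesis $G_i\Gamma$ dense for $i=1$), and that $(G,G_1)$ has relative Property~T since $G_1$ has Property~T. By Theorem~\ref{mbl}, applied with $H=\Gamma$ (a closed, finite covolume subgroup of $G$) and $N=G_1$, the projection $\pi\colon\Gamma\to Q$ is a \emph{Kazhdan homomorphism}.

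With this in hand the first assertion is immediate from Theorem~\ref{fmres}: since $\pi\colon\Gamma\to Q$ is a Kazhdan homomorphism with dense image between locally compact groups, $\Gamma$ has Property FM if and only if $Q$ has Property FM. But $Q=G_2\dots G_n$ is again semisimple of algebraic type, so by Proposition~\ref{ssfm} it has Property FM. Hence $\Gamma$ has Property FM.

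For the second assertion, assume in addition that no $G_j$ is non-Archimedean of rank~$1$. Then $Q=G_2\dots G_n$ is semisimple of algebraic type with no non-Archimedean rank~$1$ simple factor, so by the stronger conclusion of Proposition~\ref{ssfm} it has Property dFH. Applying Theorem~\ref{fmres} once more with $(P)=\mathrm{dFH}$, the Kazhdan homomorphism $\pi\colon\Gamma\to Q$ transfers Property dFH from $Q$ to $\Gamma$, so $\Gamma$ has Property dFH. The parenthetical ``and hence FW'' then follows from the general implication recorded after Definition~\ref{ddfh} (dFH implies FW), or alternatively from Corollary~\ref{cowm} together with the Property FM already established.

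The only genuine subtlety---and the point I would verify most carefully---is the verification that Theorem~\ref{mbl} applies, i.e.\ that $\Gamma$ is a closed finite-covolume subgroup of $G$ whose projection to $Q=G/G_1$ is dense. Finite covolume is automatic since $\Gamma$ is a lattice, and closedness (discreteness) of $\Gamma$ likewise; the density of the projection is exactly the irreducibility assumption specialized to the index $i=1$ corresponding to the Property~T factor. One should also note that the decomposition $G=G_1\dots G_n$ being an almost direct product (rather than a genuine direct product) is harmless, since passing to the finite cover $G_1\times\dots\times G_n$ and pulling $\Gamma$ back to a lattice there preserves all the relevant hypotheses and conclusions (Property FM and dFH being invariant under passing to and from finite covers by Proposition~\ref{fmfi} and the analogous remarks for dFH). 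Everything else is a direct invocation of the machinery already assembled: Theorem~\ref{mbl} for the Kazhdan property, Theorem~\ref{fmres} for the transfer, and Proposition~\ref{ssfm} for the ambient group.
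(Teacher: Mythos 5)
Your proposal is correct and follows essentially the same route as the paper: reduce to a direct product, note $Q=\prod_{j\neq i}G_j$ has Property FM (resp.\ dFH) by Proposition~\ref{ssfm}, observe that the projection $\Gamma\to Q$ is a Kazhdan homomorphism by Theorem~\ref{mbl}, and transfer both properties back via Theorem~\ref{fmres}. Your extra verification of the hypotheses of Theorem~\ref{mbl} is sound but matches what the paper leaves implicit.
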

\begin{proof}
We can suppose that the product is direct. If $G_i$ has Property $T$, write $G'=\prod_{j\neq i}G_j$. Then, by Proposition \ref{ssfm}, $G'$ has Property FM, and has Property dFH in case $G$ has no non-Archimedean simple factor of rank 1. The projection $\Gamma\to G'=G/G_i$ is a Kazhdan homomorphism, by Theorem \ref{mbl}. Hence by Theorem \ref{fmres}, $\Gamma$ inherits Property FM, and dFH when applicable, from $G'$.
\end{proof}

\subsection{An infinitely generated countable group with Property FM' and FW'}\label{sl3q}

Let $I$ be a set of primes. Define $\mathbf{A}_I$ as the subring of the product $\prod_{p\in I}\mathbf{Q}_p$ consisting of those $(x_p)_{p\in I}$ such that $x_p\in\mathbf{Z}_p$ for all but finitely many $p$. This is a topological ring, when its subring $\prod_{i\in I}\mathbf{Z}_p$ is prescribed to be endowed with the product topology and to be open in $\mathbf{A}_I$. If $p\in I$, it is convenient to see the factor $\mathbf{Q}_p$ as a (non-unital) subring of $\mathbf{A}_I$. If $I$ is the set of all primes, we just set $\mathbf{A}_I=\mathbf{A}$; this is the ring of {\em adeles}. 

The diagonal ring homomorphism $\mathbf{Z}[I^{-1}]\to \mathbf{R}\times\mathcal{A}_I$ embeds $\mathbf{Z}[I^{-1}]$ as a discrete cocompact subring, whose projection on $\mathcal{A}_I$ has a dense image.

\begin{lem}\label{opena}
Fix $n\ge 2$. Let $H$ be an open subgroup of $\SL_n(\mathbf{A}_I)$. Define $J=\{p\in J:\SL_n(\mathbf{Q}_p)\subset H\}$. Then $H=K\SL_n(\mathbf{A}_J)$ for some compact open subgroup $K$ of $\SL_n(\mathbf{A}_I)$.
\end{lem}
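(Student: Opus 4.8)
The plan is to use the restricted-product description $\SL_n(\mathbf{A}_I)=\prod'_{p\in I}\SL_n(\mathbf{Q}_p)$, the product being restricted with respect to the compact open subgroups $\SL_n(\mathbf{Z}_p)$, together with the ring splitting $\mathbf{A}_I=\mathbf{A}_J\times\mathbf{A}_{I\setminus J}$, which yields a topological direct product $\SL_n(\mathbf{A}_I)=\SL_n(\mathbf{A}_J)\times\SL_n(\mathbf{A}_{I\setminus J})$ in which $\SL_n(\mathbf{A}_J)$ is a closed (normal) factor. The one local input I would record first is the dichotomy that \emph{every open subgroup of $\SL_n(\mathbf{Q}_p)$ is either compact or the whole group}. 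This combines the Howe--Moore property (an open subgroup is compact or cocompact, exactly as used in the proof of Proposition~\ref{ssfm}) with the fact that $\SL_n(\mathbf{Q}_p)$ has no proper finite-index subgroup: it is generated by the root subgroups $t\mapsto I+tE_{ij}$, each isomorphic to the divisible group $(\mathbf{Q}_p,+)$, which has no proper finite-index subgroup; since a cocompact open subgroup of $\SL_n(\mathbf{Q}_p)$ has finite index, it must be everything.

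First I would check the inclusion $\SL_n(\mathbf{A}_J)\subseteq H$. Openness of $H$ provides a finite set $S$ of primes with $\prod_{p\notin S}\SL_n(\mathbf{Z}_p)\subseteq H$, and by definition of $J$ the $p$-th factor $\SL_n(\mathbf{Q}_p)$ lies in $H$ for each $p\in J$; as $H$ is a subgroup it then contains every finitely supported element of $\SL_n(\mathbf{A}_J)$. Decomposing an arbitrary $(g_p)_{p\in J}$ as a finite-support part (absorbed by these factors) times a part lying in $\prod_{p\notin S}\SL_n(\mathbf{Z}_p)$ and a finite $\SL_n(\mathbf{Z}_p)$-valued contribution at the primes of $S\cap J$ (again absorbed by the factors), I obtain $\SL_n(\mathbf{A}_J)\subseteq H$. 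Writing $H'=H\cap\SL_n(\mathbf{A}_{I\setminus J})$, the direct product structure then gives $H=\SL_n(\mathbf{A}_J)\times H'$, so it remains to prove that $H'$ is compact and to assemble $K$.

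The main obstacle is the compactness of $H'$, and the key step is a commutator computation. For $q\in I\setminus J$ the coordinate projection $\mathrm{pr}_q(H')$ is an open subgroup of $\SL_n(\mathbf{Q}_q)$, hence compact or everything by the local dichotomy; I claim it cannot be everything. Indeed, if $c$ lies in the $q$-th slice $C_q:=H'\cap\SL_n(\mathbf{Q}_q)$ and $h\in H'$, then $[h,c]$ is supported only at $q$ (the coordinates at $p\ne q$ cancel since $c$ is trivial there), so $[h,c]\in C_q$ with $q$-component $[\mathrm{pr}_q(h),c]$. Were $\mathrm{pr}_q(H')$ all of $\SL_n(\mathbf{Q}_q)$, this would force $C_q$ to be normalized by $\SL_n(\mathbf{Q}_q)$; but $C_q$ is a proper (by $q\notin J$) open, hence infinite, subgroup, whereas the proper normal subgroups of $\SL_n(\mathbf{Q}_q)$ are central and finite --- a contradiction. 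Thus $\mathrm{pr}_q(H')$ is compact for every $q\in I\setminus J$, and for $q\notin S$ it moreover contains $\SL_n(\mathbf{Z}_q)$ and so equals it by maximality of $\SL_n(\mathbf{Z}_q)$ among compact subgroups. Hence $H'\subseteq\prod_{q\in S\setminus J}\mathrm{pr}_q(H')\times\prod_{q\notin S}\SL_n(\mathbf{Z}_q)$, a compact set, so the closed subgroup $H'$ is compact. Taking $K=\big(\prod_{p\in J}\SL_n(\mathbf{Z}_p)\big)\times H'$, a compact open subgroup, one gets $K\,\SL_n(\mathbf{A}_J)=H$, as required.
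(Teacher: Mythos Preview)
Your proof is correct and follows essentially the same route as the paper's: reduce to the factor $\SL_n(\mathbf{A}_{I\setminus J})$, show that each coordinate projection of $H'$ is compact using the principle that the intersection with a factor is normal in the projection to that factor (you phrase this via the commutator computation $[h,c]\in C_q$, the paper states it directly), contradict simplicity of $\PSL_n(\mathbf{Q}_q)$ if the projection is everything, and assemble $K$ at the end. The only cosmetic differences are that the paper first treats the case $J=\emptyset$ and splits primes into $p\in F$ and $p\notin F$, whereas you first peel off $\SL_n(\mathbf{A}_J)$ and treat all $q\in I\setminus J$ uniformly; neither choice affects the substance.
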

\begin{proof}
First assume that $J=\emptyset$; hence we have to prove that $H$ is compact. Since $H$ is open, there exists a finite subset $F$ of $I$ such that for all $p\notin F$ we have $\SL_n(\mathbf{Z}_p)\subset H$.

\begin{itemize}
\item For $p\notin F$, since $\SL_n(\mathbf{Z}_p)$ is maximal in $\SL_n(\mathbf{Q}_p)$, the projection of $H$ on $\SL_n(\mathbf{Q}_p)$ is either $\SL_n(\mathbf{Z}_p)$ or $\SL_n(\mathbf{Q}_p)$; in the second case, since the intersection is normal in the projection and since $\SL_n(\mathbf{Z}_p)$ is not normal in $\SL_n(\mathbf{Q}_p)$, we deduce that the intersection of $H$ with $\SL_n(\mathbf{Q}_p)$ is all of $\SL_n(\mathbf{Q}_p)$, contradicting that $J=\emptyset$.
\item For $p\in F$, the intersection $H\cap\SL_n(\mathbf{Q}_p)$ has finite index in $\SL_n(\mathbf{Z}_p)$ and hence has finite index in its normalizer $K_p$ (because any open subgroup of $\SL_n(\mathbf{Q}_p)$ distinct from the whole group is compact), which is thus compact. So by the same argument using that the intersection is normal in the projection, we obtain that the projection is contained in $K_p$.
\end{itemize}

Thus $H\subset\prod_{p\in F}K_p\times\prod_{p\notin F}\SL_n(\mathbf{Z}_p)$, which is compact.

Now if $J$ is arbitrary, we have $\SL_n(\mathbf{A}_I)=\SL_n(\mathbf{A}_J)\times \SL_n(\mathbf{A}_{I\smallsetminus J})$. By the previous case, $L=H\cap \SL_n(\mathbf{A}_{I\smallsetminus J})$ is compact and $H=\SL_n(\mathbf{A}_J)\times L$. Defining $K=\left(\prod_{p\in J}\SL_n(\mathbf{Z}_p)\right)\times L$, the subgroup $K$ is open and $H=K\SL_n(\mathbf{A}_J)$.
\end{proof}

\begin{prop}\label{adfm}
For every $n\ge 2$ and every set of primes $I$, the locally compact group $\SL_n(\mathbf{A}_I)$ has Property FM'.
\end{prop}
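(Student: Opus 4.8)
The plan is to reduce an arbitrary transitive Eymard-amenable action to a single non-Archimedean factor, where I can invoke Property FM, which each $\SL_n(\mathbf{Q}_p)$ enjoys. A transitive continuous discrete $\SL_n(\mathbf{A}_I)$-set is of the form $X=G/H$ with $G=\SL_n(\mathbf{A}_I)$ and $H$ an open subgroup, and Property FM' amounts to showing that $X$ is finite whenever it carries an invariant mean; equivalently that $H=G$. First I would record two structural facts: for every prime $p\in I$ the factor $\SL_n(\mathbf{Q}_p)$ is a closed normal (indeed direct-factor) subgroup of $G$, since $\SL_n(\mathbf{A}_I)=\SL_n(\mathbf{Q}_p)\times\SL_n(\mathbf{A}_{I\smallsetminus\{p\}})$; and each $\SL_n(\mathbf{Q}_p)$, being semisimple of algebraic type, has Property FM by Proposition \ref{ssfm} (equivalently, its proper open subgroups are compact, so Proposition \ref{tars} applies).

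Now suppose, for contradiction, that $X$ is infinite and carries a $G$-invariant mean. Since $H\neq G$, Lemma \ref{opena} forces the set $J=\{p\in I:\SL_n(\mathbf{Q}_p)\subset H\}$ to be a proper subset of $I$ (for $J=I$ would give $H=K\SL_n(\mathbf{A}_I)=G$). Fix $p\in I\smallsetminus J$ and restrict the action to the factor $\SL_n(\mathbf{Q}_p)$. The restricted action is again continuous and discrete, and the $G$-invariant mean is a fortiori $\SL_n(\mathbf{Q}_p)$-invariant, so $X$ is Eymard-amenable as an $\SL_n(\mathbf{Q}_p)$-set. I claim every $\SL_n(\mathbf{Q}_p)$-orbit in $X$ is infinite: the stabilizer of a point $gH$ is $\SL_n(\mathbf{Q}_p)\cap gHg^{-1}=g(\SL_n(\mathbf{Q}_p)\cap H)g^{-1}$ by normality of $\SL_n(\mathbf{Q}_p)$, so all such stabilizers are conjugate to the open subgroup $\SL_n(\mathbf{Q}_p)\cap H$; since $p\notin J$, this subgroup is a proper open subgroup of $\SL_n(\mathbf{Q}_p)$, hence compact, hence of infinite index in the non-compact group $\SL_n(\mathbf{Q}_p)$. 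Thus $X$ is an Eymard-amenable $\SL_n(\mathbf{Q}_p)$-set with no finite orbit, contradicting Property FM of $\SL_n(\mathbf{Q}_p)$. Therefore $X$ is finite, and $G$ has Property FM'.

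The one genuine input, beyond this short orbit-counting argument, is the classification of open subgroups in Lemma \ref{opena}: it is what guarantees that $H\neq G$ produces a prime $p$ at which $\SL_n(\mathbf{Q}_p)$ is not swallowed by $H$, and hence a factor on which all orbits are infinite. I expect this to be the only real obstacle; once it is in hand, the uniformity of orbit sizes comes for free from the fact that $\SL_n(\mathbf{Q}_p)$ is a direct factor, so that its intersection with a point stabilizer does not depend, up to conjugacy, on the point. It is worth emphasizing that the argument does not use Property T and so works uniformly for all $n\ge 2$: even for $n=2$, where $\SL_2(\mathbf{Q}_p)$ has the Haagerup property, Property FM of $\SL_n(\mathbf{Q}_p)$ is available through the Howe-Moore property via Propositions \ref{ssfm} and \ref{tars}.
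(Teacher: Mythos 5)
Your proof is correct, and it rests on the same key input as the paper's, namely the classification of open subgroups in Lemma \ref{opena}; only the endgame differs. The paper mods out by the kernel of the action, identifies $X$ with the quotient of $\SL_n(\mathbf{A}_{I\smallsetminus J})$ by a compact open subgroup, and concludes directly that an invariant mean would force this group to be amenable (essentially Lemma \ref{multia}), which fails unless $J=I$. You instead isolate a single prime $p\in I\smallsetminus J$, restrict the action to the normal direct factor $\SL_n(\mathbf{Q}_p)$, observe via normality that all its point stabilizers are conjugates of the compact (hence infinite-index) subgroup $\SL_n(\mathbf{Q}_p)\cap H$, and invoke Property FM of $\SL_n(\mathbf{Q}_p)$ from Proposition \ref{ssfm}. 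The two arguments are functionally equivalent: Proposition \ref{ssfm} is itself proved by Howe--Moore plus the same amenability lemma, so you are routing the non-amenability contradiction through one simple factor rather than applying it to the whole complementary factor at once. Your version is slightly more modular in that it reuses an already established black box and makes explicit why the argument needs no Property T and hence works for $n=2$; the paper's is slightly more economical. Both correctly reduce everything to Lemma \ref{opena}, which is indeed the only genuine obstacle.
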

\begin{proof}
Let $H$ be an open subgroup so that the quotient set $X$ is Eymard-amenable. The group $H$ can be described by Lemma \ref{opena}; let $J$ be given by this lemma. Then modding out by the kernel of the action describes $X$ as the quotient of $\SL_n(\mathbf{A}_{I\smallsetminus J})$ by a compact open subgroup; since it has an invariant mean, it follows that $\SL_n(\mathbf{A}_{I\smallsetminus J})$ is amenable as a topological group, which can occur only if $I=J$, in which case $X$ is reduced to a singleton. This proves Property FM'.
\end{proof}

\begin{cor}\label{slnq}
For every $n\ge 3$ and every set of primes $I$, the group $\SL_n(\mathbf{Z}[I^{-1}])$ has Property FM'. In particular $\SL_n(\mathbf{Q})$ \big(or more generally $\SL_n(\mathbf{Z}[I^{-1}])$ if $I$ is infinite\big) is a countable group with Property FM' but not FM.
\end{cor}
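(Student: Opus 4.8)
The plan is to exhibit $\Gamma=\SL_n(\mathbf{Z}[I^{-1}])$ as a lattice in the product $G=\SL_n(\mathbf{R})\times\SL_n(\mathbf{A}_I)$ and then transport Property FM$'$ from the locally compact factor $\SL_n(\mathbf{A}_I)$ (which has it by Proposition \ref{adfm}) to $\Gamma$ through a Kazhdan homomorphism, exactly in the spirit of the proof of Theorem \ref{somtfm}.

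First I would recall that the diagonal embedding $\mathbf{Z}[I^{-1}]\hookrightarrow\mathbf{R}\times\mathbf{A}_I$ induces a diagonal embedding of $\Gamma$ into $G$ realizing it as an $S$-arithmetic lattice (the standard adelic description of $S$-arithmetic groups, with $S=I\cup\{\infty\}$; for $I$ the set of all primes this is the classical fact that $\SL_n(\mathbf{Q})$ is a lattice in $\SL_n(\mathbf{A})$, and the general case follows by intersecting with the compact open subgroup $\prod_{p\notin I}\SL_n(\mathbf{Z}_p)$). In particular $\Gamma$ is a closed, finite-covolume subgroup of $G$. Moreover, the projection of $\Gamma$ onto $Q:=\SL_n(\mathbf{A}_I)$ is dense: at the level of rings the projection $\mathbf{Z}[I^{-1}]\to\mathbf{A}_I$ has dense image, and density at the group level is precisely strong approximation for the simply connected group $\SL_n$.

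Next, since $n\ge 3$, the factor $N:=\SL_n(\mathbf{R})\times\{1\}$ has Kazhdan's Property T, hence the pair $(G,N)$ has relative Property T (any $G$-representation almost having invariant vectors restricts to an $N$-representation almost having invariant vectors, so by Property T for $N$ it has $N$-invariant vectors). Thus Theorem \ref{mbl} applies to the closed finite-covolume subgroup $\Gamma$ whose image in $G/N\cong Q$ is dense, and shows that the projection $\pi\colon\Gamma\to\SL_n(\mathbf{A}_I)$ is a Kazhdan homomorphism. Since $Q$ has Property FM$'$ by Proposition \ref{adfm}, Theorem \ref{fmres} (applied to the property FM$'$) yields that $\Gamma$ has Property FM$'$.

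Finally, for the ``in particular'' clause I would simply observe that when $I$ is infinite the group $\SL_n(\mathbf{Z}[I^{-1}])$ is not finitely generated (any finite subset lies in some $\SL_n(\mathbf{Z}[I_0^{-1}])$ with $I_0\subset I$ finite, a proper subgroup), so by Proposition \ref{fmcg} it fails Property FM, while it has FM$'$ by the above; taking $I$ to be the set of all primes gives $\SL_n(\mathbf{Q})$. I expect the only genuine subtlety to lie in verifying the hypotheses of Theorem \ref{mbl} --- namely that $\Gamma$ is a lattice in $G$ and that its projection to $\SL_n(\mathbf{A}_I)$ is dense; both are standard (reduction theory and strong approximation, respectively), so the argument is essentially a bookkeeping assembly of the machinery already developed, the conceptual content being entirely in Proposition \ref{adfm} and in the Kazhdan-homomorphism transfer of Theorem \ref{fmres}.
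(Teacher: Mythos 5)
Your proposal is correct and follows exactly the paper's route: realize $\SL_n(\mathbf{Z}[I^{-1}])$ as an irreducible lattice in $\SL_n(\mathbf{R})\times\SL_n(\mathbf{A}_I)$, use Property T of $\SL_n(\mathbf{R})$ together with Theorem \ref{mbl} to make the projection to $\SL_n(\mathbf{A}_I)$ a Kazhdan homomorphism, and transfer Property FM' back via Theorem \ref{fmres} and Proposition \ref{adfm}, with Proposition \ref{fmcg} ruling out FM when $I$ is infinite. You merely spell out the intermediate steps (relative Property T, strong approximation) that the paper leaves implicit.
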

\begin{proof}

The group $\SL_n(\mathbf{Z}[I^{-1}])$ is a lattice in $\SL_n(\mathbf{R})\times\SL_n(\mathbf{A}_I)$, whose projection in the second factor is dense. Moreover, since $n\ge 3$, the group $\SL_n(\mathbf{R})$ has Property T. Thus by Theorem \ref{fmres}, since $\SL_n(\mathbf{A}_I)$ has Property FM' by Proposition \ref{adfm}, it follows that $\SL_n(\mathbf{Z}[I^{-1}])$ has Property FM' as well.

If $I$ is infinite then $\SL_n(\mathbf{Z}[I^{-1}])$ is infinitely generated and hence by Proposition \ref{fmcg} does not have Property FM.
\end{proof}

\begin{rem}
For $n=2$, if $I=\emptyset$, then $\SL_n(\mathbf{Z}[I^{-1}])=\SL_2(\mathbf{Z})$ does not have Property FM'. However, for $I$ nonempty, it sounds plausible that it has Property FM' (this is part of Conjecture \ref{conjm} if $I$ is finite).
\end{rem}

Let us now turn to Property FW'.

\begin{prop}\label{prop620}For every $n\ge 3$ and every set of primes $I$, the locally compact group $\SL_n(\mathbf{A}_I)$ has Property dFH' (and hence Property FW').
\end{prop}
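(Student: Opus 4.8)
The plan is to establish Property dFH' for $\SL_n(\mathbf{A}_I)$ directly, and then invoke the fact (proved earlier in \S\ref{red1c}) that dFH' implies FW'. To do this I must show that for every open subgroup $H\subset\SL_n(\mathbf{A}_I)$ with $n\ge 3$, the transitive action on $X=\SL_n(\mathbf{A}_I)/H$ satisfies $H^1(\SL_n(\mathbf{A}_I),\ell^2(X))=0$. The main tool will be the structural description of open subgroups given by Lemma \ref{opena}: every such $H$ has the form $K\,\SL_n(\mathbf{A}_J)$ for a set of primes $J\subset I$ and a compact open subgroup $K$.

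First I would reduce the cohomological computation using the factorization of the ambient group. Writing $\SL_n(\mathbf{A}_I)=\SL_n(\mathbf{A}_J)\times\SL_n(\mathbf{A}_{I\smallsetminus J})$ as in the proof of Lemma \ref{opena}, the subgroup $H$ contains the full factor $\SL_n(\mathbf{A}_J)$, so $X$ is really the quotient of the complementary factor $\SL_n(\mathbf{A}_{I\smallsetminus J})$ by the compact open subgroup $L=H\cap\SL_n(\mathbf{A}_{I\smallsetminus J})$. Thus $\ell^2(X)$ is, as a representation of $\SL_n(\mathbf{A}_I)$, inflated from a representation of the quotient $\SL_n(\mathbf{A}_{I\smallsetminus J})$ on which the factor $\SL_n(\mathbf{A}_J)$ acts trivially. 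The key point is then that $\SL_n(\mathbf{A}_{I\smallsetminus J})$ has Kazhdan's Property~T for $n\ge 3$: each local factor $\SL_n(\mathbf{Q}_p)$ has Property~T, and Property~T for the restricted product $\SL_n(\mathbf{A}_{I\smallsetminus J})$ follows because a uniform Kazhdan constant is available across all the local factors (as $n\ge 3$). Property~T forces $H^1(\SL_n(\mathbf{A}_{I\smallsetminus J}),\sigma)=0$ for every continuous orthogonal representation $\sigma$, in particular for $\sigma=\ell^2(X)$.

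It remains to transport this vanishing back to the full group $\SL_n(\mathbf{A}_I)$. Since $\ell^2(X)$ is inflated from $\SL_n(\mathbf{A}_{I\smallsetminus J})=\SL_n(\mathbf{A}_I)/\SL_n(\mathbf{A}_J)$, any cocycle $b\in Z^1(\SL_n(\mathbf{A}_I),\ell^2(X))$ restricted to the normal factor $\SL_n(\mathbf{A}_J)$ takes values in the invariant vectors, and a short cocycle identity argument (using that $\SL_n(\mathbf{A}_J)$ is perfect, so it kills any homomorphism into the abelian group of invariant vectors) shows $b$ vanishes on $\SL_n(\mathbf{A}_J)$; hence $b$ descends to a cocycle on the quotient $\SL_n(\mathbf{A}_{I\smallsetminus J})$, which is a coboundary by the previous paragraph. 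Therefore $H^1(\SL_n(\mathbf{A}_I),\ell^2(X))=0$, establishing dFH', and FW' follows.

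The main obstacle I anticipate is justifying Property~T for the infinite restricted product $\SL_n(\mathbf{A}_{I\smallsetminus J})$ rather than for a single factor: one needs a Kazhdan constant for $\SL_n(\mathbf{Q}_p)$ (equivalently for $\SL_n(\mathbf{Z}_p)$ acting on the relevant representation) that does not degenerate as $p$ varies, so that almost-invariant vectors for the product genuinely produce invariant vectors. This uniformity is exactly what is special about $n\ge 3$ (and fails for $n=2$), and it is the step where the hypothesis $n\ge 3$ is essential. I would handle it by appealing to the bounded-generation/relative Property~T machinery that gives uniform Kazhdan constants for $\SL_n$, $n\ge 3$, over all local fields, rather than attempting an ad hoc estimate.
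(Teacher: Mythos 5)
Your reduction via Lemma \ref{opena} and the descent of the cocycle through the factor $\SL_n(\mathbf{A}_J)$ match the paper, but the step you flag as the crux is actually false: $\SL_n(\mathbf{A}_{I\smallsetminus J})$ does \emph{not} have Kazhdan's Property~T when $I\smallsetminus J$ is infinite, uniform Kazhdan constants for the local factors notwithstanding. Property~T forces compact generation, whereas every compact subset of the restricted product lies in a proper open ``box'' subgroup $\prod_{p\in F}\SL_n(\mathbf{Q}_p)\times\prod_{p\notin F}\SL_n(\mathbf{Z}_p)$ with $F$ finite, so the restricted product is not compactly generated. (Equivalently, the unit vectors $\delta_{eG_F}\in\bigoplus_F\ell^2(G/G_F)$, where $G_F$ ranges over these box subgroups, are almost invariant while no invariant vector exists. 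Alternatively: Property~T for $\SL_n(\mathbf{A}_I)$ would pass to the lattice $\SL_n(\mathbf{Z}[I^{-1}])$ of $\SL_n(\mathbf{R})\times\SL_n(\mathbf{A}_I)$, i.e.\ for $I$ the set of all primes to the infinitely generated group $\SL_n(\mathbf{Q})$ --- and the whole point of \S\ref{sl3q} is that these groups have the primed properties but \emph{not} FM or FW, both of which Property~T would imply.) As written, your argument only covers the case of finite $I$.

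The paper circumvents this by using Property~T of a \emph{single} factor: after the reduction, pick one $p$ in the relevant set of primes and set $N=\SL_n(\mathbf{Q}_p)$, a non-compact closed normal subgroup on which every cocycle with values in $\ell^2(G/K)$ is bounded, by Property~T of $N$ (here $n\ge 3$ is used). Since $K$ is compact open, $\ell^2(G/K)$ is a $C_0$-representation, and \cite[Lemma 2.9]{CTV} upgrades boundedness of a cocycle on a non-compact closed normal subgroup to boundedness on all of $G$, whence $H^1(G,\ell^2(G/K))=0$. This is exactly where transitivity of the action enters: for a non-transitive discrete $G$-set the representation need not be $C_0$, and indeed $\SL_n(\mathbf{A}_I)$ fails dFH and FM for infinite $I$. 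To repair your proof you would need to replace the global Property~T claim by an argument of this local-to-global kind.
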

\begin{proof}
Using the description of open subgroups in Lemma \ref{opena}, we are reduced to proving that for every $I$, $n\ge 3$, and compact open subgroup $K$ in $G=\SL_n(\mathbf{A}_I)$, we have $H^1(G,\ell^2(G/K))=0$. If $I=\emptyset$, then $G$ is the trivial group and this is clear; otherwise pick $p\in I$, and define the subgroup $N=\SL_n(\mathbf{Q}_p)$. Since $N$ is a non-compact closed normal subgroup and the representation of $G$ on $\ell^2(G/K)$ is $C^0$, an elementary argument \cite[Lemma 2.9]{CTV} implies that any Hilbertian 1-cocycle that is bounded on $N$ is also bounded on $G$. The boundedness on $N$ is ensured by Property~T of $N$. This shows that $H^1(G,\ell^2(G/K))=0$.
\end{proof}

\begin{cor}
For every $I$ and $n\ge 3$, the countable discrete group $\SL_n(\mathbf{Z}[I^{-1}])$ has Property dFH' and hence FW' (but not Property FW if $I$ is infinite).
\end{cor}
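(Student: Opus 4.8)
The plan is to combine the already-established Property dFH' of the ambient adelic group $\SL_n(\mathbf{A}_I)$ (Proposition \ref{prop620}) with a transfer principle across a Kazhdan homomorphism, in exactly the pattern of Corollary \ref{slnq}. First I would recall, as in the proof of Corollary \ref{slnq}, that $\Gamma=\SL_n(\mathbf{Z}[I^{-1}])$ embeds diagonally as a lattice in $S=\SL_n(\mathbf{R})\times\SL_n(\mathbf{A}_I)$, with dense projection onto the second factor $Q=\SL_n(\mathbf{A}_I)$; this stems from the embedding of $\mathbf{Z}[I^{-1}]$ as a discrete cocompact subring of $\mathbf{R}\times\mathbf{A}_I$ whose projection to $\mathbf{A}_I$ is dense. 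Since $n\ge 3$, the normal factor $N=\SL_n(\mathbf{R})$ has Property T, so $(S,N)$ has relative Property T and $Q=S/N$. By Theorem \ref{mbl}, the projection $\pi:\Gamma\to Q$ is therefore a Kazhdan homomorphism with dense image between locally compact groups.

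The transfer of Property dFH' itself requires a small adaptation, since Theorem \ref{fmres} is stated for dFH, \odFH, FM, FM' but not for the transitive variant dFH'. I would simply re-run the dFH argument of that theorem for a transitive set: given a continuous discrete transitive $\Gamma$-set $X$, Corollary \ref{surjh1} together with Proposition \ref{linac2} (which identifies $\ell^2(X)^Q$ with $\ell^2(X^{[Q]})$) yields a surjection $H^1(Q,\ell^2(X^{[Q]}))\to H^1(\Gamma,\ell^2(X))$. By the remark following Proposition \ref{linearxq}, the $Q$-reduction $X^{[Q]}$ of a transitive set is either empty or a continuous discrete transitive $Q$-set; in the first case $\ell^2(X^{[Q]})=0$ and the left-hand group vanishes, and in the second case it vanishes by Property dFH' of $Q$. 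Either way $H^1(\Gamma,\ell^2(X))=0$, so $\Gamma$ has Property dFH'. Property FW' then follows from the general implication that dFH' implies FW'.

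It remains to see that $\Gamma$ fails Property FW when $I$ is infinite, and here the key point is that $\Gamma=\SL_n(\mathbf{Z}[I^{-1}])$ is then countable and infinitely generated (as already noted in Corollary \ref{slnq}). For any countable infinitely generated group one can break Property FW directly: write $\Gamma=\bigcup_{j\ge 1}\Gamma_j$ as an increasing union of proper subgroups, set $X=\bigsqcup_{j\ge 1}\Gamma/\Gamma_j$ with the disjoint-union action, and let $M\subset X$ consist of the base coset of each $\Gamma/\Gamma_j$. Every $g\in\Gamma$ lies in $\Gamma_j$ for all large $j$, hence fixes all but finitely many base cosets, so $\#(M\tu gM)<\infty$ and $M$ is commensurated. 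On the other hand, any $\Gamma$-invariant $N\subset X$ meets each transitive piece $\Gamma/\Gamma_j$ in $\emptyset$ or the whole piece, so $M\tu N$ has at least one point in every piece and is therefore infinite; thus $M$ is not transfixed and Property FW fails. This is the analogue for FW of the implication FM $\Rightarrow$ finite generation from Proposition \ref{fmcg}, and it is the only place where the distinction between finite and infinite $I$ enters (consistently, for finite $I$ the group $\Gamma$ is finitely generated and in fact has Property FW by Theorem \ref{somtfm}). The step to be most careful about is precisely this last construction: it must use a \emph{countable} chain of proper subgroups, which infinite generation of a countable group supplies, whereas uncountable FW groups escape the argument by having uncountable cofinality (cf.\ Remark \ref{fwsb}).
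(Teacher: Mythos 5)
Your proof is correct, and for the main assertion (Property dFH$'$, hence FW$'$) it follows the paper's route exactly: the paper's proof is literally ``repeat the argument of Corollary \ref{slnq}'', i.e.\ view $\SL_n(\mathbf{Z}[I^{-1}])$ as an irreducible lattice in $\SL_n(\mathbf{R})\times\SL_n(\mathbf{A}_I)$, use Property T of $\SL_n(\mathbf{R})$ and Theorem \ref{mbl} to get a Kazhdan homomorphism onto $\SL_n(\mathbf{A}_I)$, and transfer Proposition \ref{prop620} back. You are right to flag that Theorem \ref{fmres} does not list dFH$'$ among the transferred properties, and your fix --- re-running the dFH case of that proof on a transitive set, using Corollary \ref{surjh1}, Proposition \ref{linac2}, and the remark after Proposition \ref{linearxq} to see that $X^{[Q]}$ is empty or transitive --- is exactly what ``repeat the argument'' silently requires; making it explicit is a genuine improvement in rigor. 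Where you diverge is the negative statement: the paper disposes of it in one line by observing that a countable infinitely generated group fails Serre's Property FA (hence FW, since FW strengthens FA), whereas you give a direct, self-contained construction of a commensurated non-transfixed subset from an exhaustion $\Gamma=\bigcup_j\Gamma_j$ by proper subgroups (the FW analogue of Proposition \ref{fmcg}, and consistent with Remark \ref{fwsb} on uncountable cofinality). Your construction is correct --- each $g$ eventually lies in $\Gamma_j$ so $M\tu gM$ is finite, while any invariant $N$ differs from $M$ in every one of the infinitely many nontrivial pieces --- and has the merit of not invoking Bass--Serre theory, at the cost of a few more lines than the paper's citation of FA.
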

\begin{proof}
Repeat the argument of Corollary \ref{slnq}.

If $I$ is infinite then the group is countable and infinitely generated, hence does not have Property FA and hence does not have Property FW.
\end{proof}

\begin{rem}Unlike the analogue for FM', the condition $n\ge 3$ is here necessary: for every $I$, $\SL_2(\mathbf{Z}[I^{-1}])$ does not have Property FW': if $p\in I$ just use its dense embedding into $\SL_2(\mathbf{Q}_p)$; if $I=\emptyset$ this is clear as well. 
\end{rem}

The following proposition allows to obtain more examples of groups with Property FM'.

\begin{prop}\label{sums}
Let $(S_i)_{i\in I}$ be a family of infinite simple discrete groups with Property FM'. Then the direct sum $S=\bigoplus S_i$ has Property FM'.
\end{prop}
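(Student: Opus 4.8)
The plan is to show that any Eymard-amenable transitive action of $S=\bigoplus_{i\in I} S_i$ is necessarily finite. Let $X=S/H$ with $H$ open (equivalently, of finite index in some finitely supported product, but more precisely $H$ is an arbitrary subgroup since $S$ is discrete) and suppose there is an $S$-invariant mean $m$ on $X$. The key structural feature to exploit is that each $S_i$ is simple: for each $i$, the image of $S_i$ in the permutation group of $X$ is either trivial or a faithful copy of $S_i$. Let $J=\{i\in I: S_i \text{ acts nontrivially on } X\}$. The first step is to understand the set $J$ and the way the factors interact with the stabilizer $H$.

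The main idea I would pursue: by Property FM' of each individual $S_i$, the restricted $S_i$-action on $X$ cannot be transitive-and-infinite with an invariant mean. More carefully, I would push the invariant mean $m$ forward to the orbit spaces and argue one factor at a time. First I would observe that for $i\in J$, the restriction of $m$ still gives an $S_i$-invariant mean on $X$ (viewed as an $S_i$-set, i.e.\ a disjoint union of $S_i$-orbits). The subtlety is that an $S_i$-orbit in $X$ need not be all of $X$, so I cannot apply FM' of $S_i$ directly to $X$; instead I would apply it to each orbit after checking the orbit carries a mean, or more cleanly, use that the quotient $X/S_i$ still carries an induced invariant mean for the action of $S/\langle S_i\rangle$-type group. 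The cleanest route is probably to show $J$ is finite: if $J$ were infinite, one could build almost-invariant vectors in $\ell^2(X)$ coming from infinitely many independent nontrivial factors, forcing amenability-type behaviour incompatible with simplicity and FM'.

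The concrete step-by-step I expect to carry out: (1) reduce to the case $J=I$ by discarding factors acting trivially (they do not affect transitivity of the remaining group and the mean descends). (2) Show that the action of $S_i$ on $X$, for each $i\in J$, has a finite orbit, using Property FM' for $S_i$ applied to a transitive sub-$S_i$-set on which the (restricted) mean does not vanish; simplicity ensures the $S_i$-action is either trivial or gives genuine dynamics. (3) Combine the finite-orbit information across factors: since the $S_i$ commute (as a direct sum), finite $S_i$-orbits organize $X$ into a product-like structure, and the stabilizer $H$ must contain a cofinite-support piece. (4) Conclude that $X$ is finite, invoking that a simple group acting on a finite set either fixes it pointwise or acts faithfully on a finite set (so the factor is finite, contradicting infiniteness) — hence only finitely many factors act nontrivially and each with finite orbits, giving $X$ finite.

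The hard part will be step (2)–(3): controlling how the invariant mean behaves under restriction to a single simple factor when the $S_i$-orbits do not exhaust $X$, and then synthesizing the per-factor finiteness into global finiteness of $X$. The danger is that an invariant mean on $X$ can ``spread out'' across infinitely many orbits so that no single $S_i$-orbit carries positive mass, evading a naive application of FM'. I expect to resolve this by exploiting simplicity (the normal-subgroup-generated-by-$S_i$ argument) together with the commuting structure of the direct sum: because the $S_i$ commute, the partition of $X$ into $S_i$-orbits is $S_j$-invariant for $j\neq i$, so invariant means descend through each quotient and the finiteness obstruction from each FM' hypothesis accumulates correctly. Getting this bookkeeping precise — ensuring no mass escapes to infinity through an infinite collection of factors — is where the real work lies; infiniteness of each $S_i$ is presumably used to exclude the degenerate possibility that a nontrivial factor has a global fixed structure.
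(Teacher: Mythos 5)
Your overall strategy (work factor by factor and invoke FM' of each $S_i$) is in the right spirit, but the two places you yourself flag as ``the real work'' are exactly where the argument breaks, and your proposed fixes do not repair them. In step (2) you want to apply FM' of $S_i$ to ``a transitive sub-$S_i$-set on which the restricted mean does not vanish''; as you note, no such orbit need exist, since a finitely additive invariant mean can give mass $0$ to every single $S_i$-orbit, and passing to $X/S_i$ does not help because $S_i$ acts trivially there. The missing idea is to map $X=S/H$ onto a \emph{single} transitive $S_i$-set instead of decomposing $X$ into $S_i$-orbits: the $S$-equivariant surjection $S/H\to S_i/p_i(H)$ (where $p_i$ is the coordinate projection and $S$ acts on the target through $p_i$) pushes the mean forward to an $S_i$-invariant mean on $S_i/p_i(H)$; if $p_i(H)\neq S_i$, this set is infinite (an infinite simple group has no proper subgroup of finite index), contradicting FM'. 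This is how the paper proves $p_i(H)=S_i$ for all $i$, with no mass escaping anywhere. Note also that even where your step (2) can be carried out, its conclusion (``$S_i$ has a finite orbit'') is too weak: one finite orbit, on which $S_i$ necessarily acts trivially by simplicity, says nothing about the other, possibly infinite, orbits, so it cannot drive your step (4).

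Second, knowing that every coordinate projection $p_i(H)$ is onto is still far from $H=S$, and your synthesis in steps (3)--(4) does not engage with the obstruction. The model case is a Goursat diagonal: if $S_1\cong S_2$ and $H$ is the graph of an isomorphism in $S_1\times S_2$, all projections are onto but $X\cong S_1$ with $S_1$ and $S_2$ acting by left and right translations; there is no ``product-like structure'' of finite orbits here, and what rules this case out is that an invariant mean on $X$ would be a bi-invariant mean on $S_1$, contradicting the non-amenability of $S_1$ (which follows from $S_1$ being infinite with FM'). The paper accordingly proves that the projection of $H$ to each $S_i\times S_j$ is surjective, and then concludes with a commutator argument: taking $f\in H$ of minimal support not contained in the set $J$ of indices with $S_j\subset H$, and an element $g\in H$ with $g_j=1$ and $[g_i,f_i]\neq 1$, the commutator $[g,f]\in H$ has strictly smaller support, a contradiction forcing $H=S$. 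Neither the pairwise-projection step nor this support-reduction step appears in your outline, so the proposal as written has genuine gaps at both the per-factor stage and the global synthesis.
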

\begin{proof}
Let $H\subset S$ be a subgroup such that $S/H$ is Eymard-amenable and let us show that $H=S$.

If by contradiction the projection $p_i(H)$ of $H$ on some $S_i$ is not surjective, then $S/H$ has an equivariant surjection onto the infinite $S_i$-set $S_i/p_i(H)$, which has no invariant mean by the Property FM' for $S_i$, and hence $S/H$ is not Eymard-amenable. So each projection is surjective, i.e.\ $p_i(H)=S_i$ for all $i$. 

For $i\neq j$, the projection on $S_i\times S_j$ has a surjective projection $P$ on both $S_i$ and $S_j$, so is either the graph of an isomorphism $S_i\to S_j$ or is all of $S_i\times S_j$. In the first case, since $S_i$ is not amenable, there is no $S_i$-invariant mean on $(S_i\times S_j)/P$, and hence $S/H$ is not Eymard-amenable, a contradiction. Thus, for all $i\neq j$ the projection on $S_i\times S_j$ is surjective.

Let $J\subset I$ be maximal such that $\bigoplus_{j\in J}S_j$ is contained in $H$. Let us show that $I=J$, i.e.\ $H=S$. Otherwise, since all projections are surjective, there exists $f\in H$ with support not contained in $J$; we choose $f$ of support of minimal cardinality. The minimality implies that $f$ has support in $I\smallsetminus J$, since otherwise we can modify $f$ on $J$ to reduce its support. Besides, by the definition of $J$, it follows that the support of $f$ has at least two distinct elements $i,j\in I\smallsetminus J$. Since the projection of $H$ on $S_i\times S_j$ is surjective, we can find $g\in H$ such that $g_j=1$ and $g_i$ does not commute with $f_i$. Hence $[g,h]\in H$ and has strictly smaller support, not contained in $J$. This contradicts the definition of $f$.
\end{proof}

\section{Application of Properties FM and FW to groups of bounded displacement permutations}\label{appfm}

Recent attention has been paid to the group $\Wob(\mathbf{Z})$ of bounded displacement permutations of $\mathbf{Z}$ and some more general spaces. A general question, addressed by Juschenko and la Salle \cite{JlS}, is to find general properties of finitely generated subgroups of $\Wob(\mathbf{Z})$, or equivalently to find constraints on homomorphisms from finitely generated groups to $\Wob(\mathbf{Z})$. If more generally $X$ is a discrete metric space with uniformly subexponential growth, in the sense that $\lim_{n\to\infty}\sup_{x\in X}\#B_X(x,n)^{1/n}=1$ where $B_X(x,n)$ is the $n$-ball around $x\in X$, Juschenko and la Salle prove that every homomorphism from a discrete Kazhdan group into $\Wob(X)$ has a finite image. Let us provide the following two extensions of this result.

\begin{thm}\label{fwo}
Let $\Gamma$ be a finitely generated group.
\begin{enumerate}
\item\label{fwwz} if $\Gamma$ has Property FW then every homomorphism $\Gamma\to\Wob(\mathbf{Z})$ has a finite image;
\item\label{fmwx} if $\Gamma$ has Property FM and $X$ is a discrete metric space of uniformly subexponential growth, then every homomorphism $\Gamma\to\Wob(X)$ has a finite image.
\end{enumerate}
\end{thm}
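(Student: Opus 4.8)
The plan is to treat both parts with the same opening move and then diverge. In each case I first replace $\Gamma$ by its image $G=\rho(\Gamma)\le\Wob(X)$ (with $X=\mathbf{Z}$ in~(1)): Property FM passes to dense images, hence to quotients (Fact~\ref{squot}), and Property FW passes to quotients just as easily (a $G$-action on a cube complex pulls back to $\Gamma$ with the same orbits). So it suffices to show that a finitely generated group with Property FW (resp.\ FM) that acts \emph{faithfully} on $\mathbf{Z}$ (resp.\ on a space of uniformly subexponential growth) by bounded displacement permutations is finite. Fix a finite generating set $S$; each generator moves every point by at most some $D$, a word of length $w$ moves points by at most $Dw$, and the basic elementary estimate, used throughout, is that a permutation $g$ of displacement $\le D_g$ distorts distances only additively: $d(gx,gy)\le d(x,y)+2D_g$.

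For part~(2) I would first rule out infinite orbits. If $O$ is an infinite orbit, fix $x_0\in O$; the balls $B_O(x_0,n)=O\cap B_X(x_0,n)$ are finite, increase to $O$, and their cardinalities $a_n$ grow subexponentially, so $\liminf_n a_{n+c}/a_{n-c}=1$ and one may choose radii $n_k$ along which the normalized indicators $\mathbf{1}_{B_O(x_0,n_k)}/a_{n_k}$ are asymptotically invariant under each generator (their symmetric differences sit in an annulus of fixed width $\le 3D$). This yields a $G$-invariant mean, exhibiting $O$ as an infinite transitive Eymard-amenable $G$-set and contradicting Property FM (indeed FM$'$). Hence every orbit is finite. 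Viewing each orbit as a Schreier graph for $S$, embedded in $X$ with edges of length $\le D$, one gets a family of finite graphs of \emph{uniformly} subexponential growth, and the plan is to assemble from their Følner sets an invariant mean on $G$ itself, i.e.\ to prove $G$ amenable; then $G$ is an infinite amenable quotient of an FM group, which is impossible.

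For part~(1), where only FW is available, I would exploit the linear order on $\mathbf{Z}$. Each half-line $M_k=\mathbf{Z}_{\ge k}$ is commensurated (a displacement-$\le D_g$ permutation alters $M_k$ only inside $[k-D_g,k+D_g)$), so by Property FW each $M_k$ is transfixed by a $G$-invariant $N_k$ with $\#(N_k\tu M_k)<\infty$. Intersecting successively one may take the $N_k$ nested, so the pieces $P_k=N_k\smallsetminus N_{k+1}$ are finite $G$-invariant sets, commensurate to $\{k\}$, partitioning $\mathbf{Z}$ up to a finite set; since each $P_k$ is invariant and $G$ acts transitively on orbits, every orbit lies in a single $P_k$ and is finite. (Equivalently: on an infinite orbit $O$ the commensurated trace $O\cap M_k$ would be finite or cofinite by transitivity, forcing $O$ to be one-sided, which the nested transfixings exclude.) To finish I would bound $\#P_k$ \emph{uniformly} in $k$ and then invoke the elementary fact that a finitely generated group admitting a faithful action with all orbits of size $\le B$ is residually a finite group of order $\le B!$, hence—having only finitely many subgroups of index $\le B!$, with trivial intersection—finite.

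The main obstacle in both parts is the final passage from a controlled orbit structure to genuine finiteness of $G$. In~(2) it is the amenability of $G$ extracted from the uniformly subexponential geometry of its finite Schreier graphs (a box-space Følner argument, the delicate point being to upgrade faithfulness to enough asymptotic faithfulness that the word growth of $G$ is actually seen by the orbits). In~(1) it is the uniform bound on the transfixing defects $\#(N_k\tu M_k)$, which does \emph{not} follow from Property FW applied to one half-line and forces one to treat the whole linearly ordered family of cuts coherently at once. I expect~(1) to be the more subtle, precisely because FW—unlike FM—is not known to forbid infinite amenable quotients (cf.\ Question~\ref{quesfm}(\ref{qamfw})), so here the argument must yield a genuine \emph{uniform} bound rather than mere amenability.
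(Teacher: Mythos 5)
Both halves of your plan stall at exactly the points you flag, and in both cases the missing step is not a technicality but the heart of the matter; the paper resolves both with an ultralimit argument that your sketch does not contain. For part~(2), your opening (F\o lner annuli in a single infinite orbit give an invariant mean, contradicting FM) matches the paper and is fine, but the endgame is broken: F\o lner sets in the finite Schreier graphs produce an invariant mean on the $G$-\emph{set} $X$ (i.e.\ Eymard-amenability of the action), not an invariant mean on $G$. Faithful Eymard-amenable actions of non-amenable groups exist --- that distinction is precisely the subject of the reference [GN] and the reason Property FM is not just non-amenability --- so "assemble an invariant mean on $G$ itself" cannot be carried out, and no amount of "asymptotic faithfulness" bridges it in general. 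What the paper does instead: having shown all orbits are finite, it assumes orbit sizes are unbounded, picks base points $z_n$ in orbits of size $k_n\to\infty$, proves (via the growth of $S^jz_n$) that the orbit of $z_n$ contains points at distance in $[k,k+m]$ from $z_n$ realized by a fixed finite set $B_k\subset\Gamma$, and passes to an ultralimit of the pointed spaces $(X,z_n)$; the limit is again a bounded-displacement action on a space of uniformly subexponential growth, but with an \emph{infinite} orbit, contradicting the first step. No amenability of $G$ is ever needed.

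For part~(1), applying FW only to the half-lines $M_k=\mathbf{Z}_{\ge k}$ genuinely does not suffice: it fails to exclude one-sided infinite orbits. Concretely, let $\sigma$ be the displacement-$2$ permutation of $\mathbf{Z}$ that is the identity on the negatives and acts on $\mathbf{N}$ along the single infinite orbit $\cdots\to 3\to 1\to 0\to 2\to 4\to\cdots$ (evens shift up by $2$, odds shift down by $2$). Every half-line $M_k$ is transfixed by the invariant set $\mathbf{N}$, all your $N_k$ exist, and yet there is an infinite orbit; the commensurated set witnessing non-transfixion is the forward half $2\mathbf{N}$ of the orbit, which is not a half-line of $\mathbf{Z}$. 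Relatedly, the uniform bound on the defects $\#(N_k\tu M_k)$ (equivalently on $\#P_k$) that your final step requires is exactly what Property FW never supplies across a family of commensurated sets, and without bounded orbit sizes the product $\prod_k\mathrm{Sym}(P_k)$ is not locally finite (it contains free groups), so "all orbits finite" alone does not finish. The paper's route again goes through an ultralimit: if orbits have unbounded cardinality one chooses $z_n$ whose orbit meets both $\mathopen]-\infty,z_n-n]$ and $[z_n+n,+\infty\mathclose[$ (possible even inside a one-sided orbit by recentering), takes the ultralimit to get a bounded-displacement action on $\mathbf{Z}$ whose orbit of $0$ accumulates at both $+\infty$ and $-\infty$, and then the single commensurated set $\mathbf{N}$ \emph{in the limit action} cannot be commensurate to an invariant set, contradicting FW. You would need to import this limiting (or some equivalent "dynamical" choice of commensurated set) to close either half.
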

\begin{proof}
Let $u(n)$ be the least upper bound on the size of all closed $n$-balls in $X$.

Note that in both cases, we only have to prove that $\Gamma$ has orbits of bounded cardinality, since it then embeds into an infinite power of some given finite symmetric group, which is locally finite.

Let us first prove (\ref{fmwx}). Let us first check that $\Gamma$ has finite orbits: indeed, the subexponential condition implies that $\Gamma$ preserves a mean on each of its orbits, and Property FM gives the conclusion.

Now let us prove that the orbits has bounded cardinality; otherwise, let $z_n\in X$ belong to an orbit of cardinal $k_n$ with $k_n\to\infty$. Define the radius of $x\in\Gamma z_n$ as the distance $d(x,z_n)$. Let $r_n$ be the largest radius of an element in $\Gamma z_n$. The uniform discreteness implies that $r_n$ tends to infinity, and since generators of $\Gamma$ have bounded displacement, the set of radii of elements of $\Gamma z_n$ is cobounded in $[0,r_n]$, uniformly in $n$. We actually need a more precise statement, namely:

\medskip

\noindent{\em Claim.} There exists $m$ and an sequence of finite subsets $(B_k)$ of $\Gamma$, such that for every $k$ there exists $n_0(k)$ such that for every $n\ge n_0(k)$ there is $\gamma\in B_k$ with $d(z_n,\gamma z_n)\in [k,k+m]$.
 
\medskip 
 
\noindent{\em Proof of the claim.} Let $S$ be a finite symmetric generating subset with identity. Let $m\ge 1$ be an upper bound on the displacement of elements of $S$. So $jm$ is a bound on the displacement of elements of $S^j$. If $S^jz_n=S^{j+1}z_n$, then $S^jz_n=\Gamma z_n$, and therefore $r_n\le jm$. Thus if $j<r_n/m$ then $S^{j+1}z_n$ strictly contains $S^jz_n$. Thus, defining $s_n=\lfloor r_n/m\rfloor$, the cardinal of $S^jz_n$ is $\ge j$ for all $j\le s_n$.

Given $k$, if $n$ is large enough, say $n\ge n_0(k)$, then $s_n\ge u(k)+1$. So  $S^{u(k)+1}z_n$ has cardinal $\ge u(k)+1$. Thus for $n\ge n_0(k)$, $S^{u(k)+1}z_n$ contains an element outside the closed $k$-ball around $z_n$. Pick $\gamma\in S^\ell$ with $\ell$ minimal and $d(\gamma z_n,z_n)> k$; hence $\ell\le u(k)+1$. It follows from minimality that $d(\gamma z_n,z_n)\le k+m$. Thus the claim is proved, with $B_k=S^{u(k)+1}$.

\medskip

Fix a nonprincipal ultrafilter on the positive integers, and let $Y$ be the ultralimit of the sequence of pointed metric spaces $(X,z_n)$. This is the set of sequences $(x_n)$ with $d(x_n,z_n)$ bounded, modulo being at distance $d_\omega$ zero, where $d_\omega((x_n),(x'_n))=\lim_\omega d(x_n,x'_n)$ (the reader can also construct $Y$ as a Gromov-Hausdorff limit). Then $Y$ is a metric space, in which the $n$-balls have cardinal $\le u(n)$, where $u(n)$ is a bound on the cardinals of $n$-balls in $X$, hence has a uniformly subexponential growth. Since elements of $\Gamma$ have a bounded displacement, the mapping $(x_n)\mapsto (\gamma x_n)$, for $\gamma\in\Gamma$, induces an action of $\Gamma$ on $Y$. By the claim, for every $k$ and every $n\ge n_0(k)$, there is $\gamma\in B_k$ such that $d(\gamma z_n,z_n)\in [k,k+m]$. Since $B_k$ is finite, it follows, denoting by $z$ the element $(z_n)$ of $Y$, that for every $k$ there is $\gamma\in B_k$ such that $d(z,\gamma z)\in [k,k+m]$. Thus the $\Gamma$-orbit of $z$ is unbounded. This is a contradiction with the beginning of the proof.

Let us now prove (\ref{fwwz}). Let $\Gamma$ act on $\mathbf{Z}$ by permutations of bounded displacement. If orbits have unbounded (possibly infinite) cardinal, for each $n$ pick an element $z_n$ such that orbit of $\Gamma z_n$ has nonempty intersection with both $\mathopen]-\infty,z_n-n]$ and $[z_n+n,+\infty\mathclose[$. Then consider an ultralimit as previously; we obtain an action on $\mathbf{Z}$ by bounded displacement, so that the orbit of 0 is neither bounded above nor below. On the other hand, $\mathbf{N}$ being commensurated, it is commensurate to an invariant subset, and this is a contradiction with the existence of an orbit accumulating on both $+\infty$ and $-\infty$.
\end{proof}

\begin{rem}
The argument of the proof of Theorem \ref{fwo}(\ref{fwwz}) also shows the following:
\begin{enumerate}
\item
 If a finitely generated group $G$ has a homomorphism into $\Wob(\mathbf{Z})$ with an infinite image, then it also has a homomorphism into $\Wob(\mathbf{Z})$ defining a transitive action on $\mathbf{Z}$.
 \item If $G$ is a finitely generated group with Property FW and $X$ is a Schreier graph of $G$, then for every sequence $(x_n)$ in $X$, any Gromov-Hausdorff limit of the set of pointed graphs $(X,x_n)$ has at most one end; if $X$ is infinite we can thus say that $X$ is ``stably one-ended". An example of a one-ended graph which is not stably one-ended is given by a combinatorial half-line.
\item Let $G$ be a finitely generated group and $H$ a finitely generated subgroup. If $X$ is a connected graph of bounded valency, such that $G$ admits a homomorphism into $\Wob(X)$ such that $H$ has an infinite image, then there exists an ultralimit $X'$ of $X$ and a homomorphism of $G$ into $\Wob(X')$ such that $H$ has an infinite orbit in $X'$ (pick any sequence $(x_n)$ in $X$ such that $\# (Hx_n)\to\infty$ and pick any ultralimit of the family of based graphs $(X,x_n)$). We deduce, for instance, that cyclic subgroups of finitely generated subgroups of $\Wob(\mathbf{Z})$ are undistorted. 
\end{enumerate}
\end{rem}

It would be interesting to have results for locally compact groups as well. But there is a continuity issue when considering the ultralimit. To deal with this continuity issue, it is enough to deal with profinite groups. A simple Baire argument shows that if a compact group acts continuously on a metric space with bounded displacement self-homeomorphisms, then there is an upper bound for the displacement of elements in the whole group.

On the other hand, there exist continuous actions with bounded displacement of topologically finitely generated profinite groups on $\mathbf{Z}$, with orbits of unbounded cardinal. For instance the group $\mathbf{Z}_p$ admits such an action: indeed we can let the generator act using cycles of displacement 2 and length $p^n$ (of the form $(0,2,\dots,2k-2,2k,2k-1,2k-3,\dots,3,1)$ or $(0,2,\dots,2k,2k+1,2k-1,\dots,3,1)$ according to whether $p$ is even or odd). 

The ultralimit construction in the proof of Theorem \ref{fwo} thus provides an action with an infinite orbit, which is necessarily non-continuous.

\end{document}